\author{H. Egger \and B. Radu}
\address{Department of Mathematics, TU Darmstadt, Germany}
\email{egger@mathematik.tu-darmstadt.de}
\email{radu@gsc.tu-darmstadt.de}
\definecolor{mygray}{rgb}{.5,.5,.5}
\title[A Mixed FEM with mass-lumping for wave propagation]{A mass-lumped mixed finite element method\\ for acoustic wave propagation}
\newtheorem{lemma}{Lemma}[section]
\newtheorem{problem}[lemma]{Problem}
\newtheorem{theorem}[lemma]{Theorem}
\theoremstyle{definition}
\newtheorem{remark}[lemma]{Remark}
\newtheorem*{example*}{Example}
\def\div{\mathrm{div}}
\def\grad{\nabla}
\def\tnorm{|\!|\!|}
\def\dt{\partial_t}
\def\dtt{\partial_{tt}}
\def\dttt{\partial_{ttt}}
\def\u{u}
\def\p{p}
\def\RR{\mathbb{R}}
\def\Th{\mathcal{T}_h}
\def\R{\mathbb{R}}
\def\dtau{d_\tau}
\def\P{\text{P}}
\def\T{\mathcal{T}}
\def\BDM{\text{BDM}}
\numberwithin{equation}{section}
\numberwithin{table}{section}
\numberwithin{figure}{section}
\begin{document}


\begin{abstract}
We consider the numerical approximation of acoustic wave propagation in the time domain by a mixed finite element method based on the $\text{BDM}_1$--$\text{P}_0$ spaces. A mass-lumping strategy for the $\text{BDM}_1$ element, originally proposed by Wheeler and Yotov in the context of subsurface flow problems, is utilized to enable an efficient integration in time. By this mass-lumping strategy, the accuracy of the mixed method is formally reduced to first order. We will show, however, that the numerical approximation still carries global second order information, which is expressed as super-convergence of the numerical approximation to certain projections of the true solution. Based on this fact, we propose post-processing strategies for both variables, the pressure and the velocity, which yield piecewise linear approximations of second order accuracy. A complete convergence analysis is provided for the semi-discrete and corresponding fully-discrete approximations, which result from time discretization by the leapfrog method. In addition, some numerical tests are presented to illustrate the efficiency of the proposed approach.
\end{abstract}

\maketitle

\begin{quote}
\noindent 
{\small {\bf Keywords:} 
wave equation, 
mixed finite elements, 
mass-lumping,
super-convergence,
post-processing,
leapfrog scheme}
\end{quote}

\begin{quote}
\noindent
{\small {\bf AMS-classification (2000):}
35L05, 35L50, 65L20, 65M60}
\end{quote}

\section{Introduction}

Due to the many applications in acoustics, elastodynamics or electromagnetics, the numerical approximation of wave propagation has been a topic of intensive research for many years. Various discretization schemes are available by now, including finite difference, finite volume, and finite element methods, each with their particular advantages and shortcommings. 
In this manuscript, we consider finite element methods over unstructured grids, which are very flexible concerning geometry and physical parameters, and we focus on acoustic wave propagation in the time domain as our model problem.

The numerical approximation of the second order wave equation by continuous piecewise polynomial finite elements 
is well understood; see, e.g., \cite{Baker76,Dupont73,Wheeler73a} and also \cite{BakerBramble79,DouglasDupontWheeler78} for fully discrete approximations.
Mass-lumping strategies have been proposed and analyzed in \cite{Cohen02,CohenJolyRobertsTordjman01}, in order to allow for an efficient integration in time by explicit Runge-Kutta or multistep methods.
Let us refer to \cite{Cohen02,CohenMonk98} for corresponding results in the context of electromagnetics.

For problems with anisotropic, strongly inhomogeneous, or dispersive materials, the formulation of the wave propagation problem as  hyperbolic system of first order seems to be advantageous. Corresponding mixed finite element approximations for acoustic and elastic wave propagation have been considered in \cite{CowsarDupontWheeler96,Geveci88,JenkinsRiviereWheeler02,Makridakis92}. Let us refer to  \cite{LeeMadsen90,MakridakisMonk95,Monk92} for related results in electrodynamics and to \cite{Cohen02,Joly03}
for an overview about the general approach and further references.

In this paper, we consider a mixed finite element discretization for acoustic wave propagation based on the conforming approximation for velocity and pressure in the spaces $H(\div)$ and $L^2$ with $\BDM_1$ and $\P_0$ elements, respectively; 
see \cite{BoffiEtAl08,BrezziDouglasMarini85} for details on these elements.
Together with mass-lumping, the semi-discrete approximation in space for our model problem is 
defined by the mixed variational principle
\begin{align}
(\dt u_h(t),v_h)_h - (p_h(t), \div \, v_h) &= 0, \label{eq:sys1}\\  
(\dt p_h(t),q_h) + (\div \, u_h(t),q_h) &= 0,   \label{eq:sys2}
\end{align}
with $u_h$ and $p_h$ denoting the numerical approximations for velocity and pressure. 
The modified scalar product $(\cdot,\cdot)_h$ for the velocity space is chosen in order to give rise to 
a block diagonal mass matrix \cite{WheelerYotov06}. The resulting semi-discrete problem can then be 
integrated efficiently in time by explicit time-stepping schemes. 
It is not difficult to see that the method outlined above is first order accurate, i.e., 
\begin{align} \label{eq:est1}
\|u(t) - u_h(t)\| + \|p(t) - p_h(t)\| \le C h,  
\end{align}
whenever the true solution $(u,p)$ is sufficiently smooth. 

As shown in \cite{EggerRadu16}, the standard Galerkin approximation, which uses the normal scalar product $(\cdot,\cdot)$ instead of the $(\cdot,\cdot)_h$ in equation \eqref{eq:sys1}, yields a second order approximation for the velocity and the projected pressure \cite{EggerRadu16}. This is no longer true for the method \eqref{eq:sys1}--\eqref{eq:sys2}, for which the approximation for the velocity is only of first order due to the perturbations introduced by the mass-lumping procedure.
Surprisingly, the pressure still shows super-convergence in numerical tests, i.e., 
\begin{align} \label{eq:est2}
\|\pi_h^0 p(t) - p_h(t)\| \le C h^2,
\end{align}
where $\pi_h^0$ denotes the $L^2$-orthogonal projection onto piecewise constant functions. 

As a first result of our paper, we give a proof of the estimate \eqref{eq:est2} and we show that 
\begin{align} \label{eq:est3}
\|u_h^*(t) - u_h(t)\| + \|p_h^*(t) - p_h(t)\| \le C h^2,  
\end{align}
i.e., both solution components converge with second order to a certain projection $(u_h^*,p_h^*)$ of the true solution $(u,p)$. 
This projection has to be chosen such that the numerical error of the mass-lumping has no influence on the discrete error components.
For the construction of these auxiliary functions, we here employ an inexact elliptic projection considered by Wheeler and Yotov \cite{WheelerYotov06} in the context of subsurface flow problems.

Based on the improved estimate \eqref{eq:est3}, we are able to construct in a post-processing step piecewise linear improved approximations $(\widetilde u_h,\widetilde p_h)$ which satisfy
\begin{align} \label{eq:est4}
\|u(t) - \widetilde u_h(t)\|  + \|p(t) - \widetilde p_h(t)\| \le C h^2.
\end{align}
This shows that the post-processing method is second order accurate.
The analysis of these post-processing strategies will be the second main contribution of our paper.

In addition, we also consider the corresponding fully-discrete scheme, which results from time-discretization of \eqref{eq:sys1}--\eqref{eq:sys2} by the leapfrog method \cite{HairerLubichWanner03,Joly03}. 
A full convergence analysis for the resulting method is presented, including the extension of the post-processing schemes to the fully-discrete setting. The piecewise linear approximations $(\widetilde u_h^{\,n},\widetilde p_h^{\,n})$ obtained 
by these post-precessing procedures satisfy
\begin{align} \label{eq:est5}
\|u(t^n) - \widetilde u_h^{\,n}\| + \|p(t^n) - \widetilde p_h^{\,n}\| \le C (h^2 + \tau^2).  
\end{align}
Here $h$ and $\tau$ describe the mesh and time step size, respectively. 
In summary, we thus obtain a second order approximation for the wave equation on unstructured meshes,
which can be computed efficiently in a fully explicit manner.

Let us emphasize that related results are well-known for mass-lumped mixed finite element approximations on structured grids \cite{Cohen02,MonkSuli94}, which can be understood as variational formulations of finite-difference time-domain methods \cite{Taflove,Yee66}. These methods are the gold standard for the simulation of wave propagation problems in industrial applications. 
Our results can thus be seen as a generalization of such methods to unstructured grids.

\bigskip

The remainder of the manuscript is organized as follows:
In Section~\ref{sec:prelim}, we summarize our notation and introduce the model problem considered in the rest of the paper.
Section~\ref{sec:semi} is devoted to the analysis of the semi-discrete scheme \eqref{eq:sys1}--\eqref{eq:sys2} with mass-lumping and to the proof of the estimates \eqref{eq:est1}--\eqref{eq:est3}. 
In Section~\ref{sec:semi-post}, we propose the post-processing strategies for pressure and velocity and establish the estimate \eqref{eq:est4}. 
Section~\ref{sec:time} deals with the time-discretization of \eqref{eq:sys1}--\eqref{eq:sys2} by the leapfrog scheme 
and we present a complete convergence analysis of the fully-discrete method. 
Section~\ref{sec:time-post} is concerned with the extension of the post-processing strategies to the fully discrete level and contains the proof of the final estimate \eqref{eq:est5}.  
In Section~\ref{sec:num}, we report about some numerical tests which illustrate our theoretical results and demonstrate the efficiency of the proposed method.

\section{Preliminaries} \label{sec:prelim}

Throughout the manuscript, let $\Omega\subseteq\R^d$, $d=2,3$ be some bounded Lipschitz domain and $T>0$ be a fixed time horizon. We consider the first order hyperbolic system
\begin{align}
\dt \u + \grad \p &= 0 \qquad \text{in } \Omega\times(0,T),\label{eq:sysm1}\\
\dt \p + \div\, \u &= 0 \qquad \text{in } \Omega\times(0,T),\label{eq:sysm2} 
\end{align}
and we call $u$ and $p$ the \emph{velocity} and \emph{pressure}, respectively. 
For ease of presentation, we assume that the pressure satisfies homogeneous boundary conditions, i.e., 
\begin{align}
\p &= 0 \qquad \text{on } \partial\Omega\times(0,T), \label{eq:sysm3} 
\end{align}
and we further require knowledge of the initial values
\begin{align}
u(0)=u_0\quad\text{ and }\quad p(0)=p_0\quad\text{ in }\Omega. \label{eq:sysm4} 
\end{align}
The simplicity of this model problem facilitates the presentation of our results. 
The proposed methods are, however, also applicable to problems with inhomogeneous or anisotropic coefficients, 
with lower order terms or right hand sides, and other types of boundary conditions. 
This will become clear from our analysis and the numerical tests.

The well-posedness of the initial boundary value problem \eqref{eq:sysm1}--\eqref{eq:sysm4} can be deduced from standard arguments of semi-group theory; see \cite{Evans98,Pazy83} for details. 
\begin{lemma}[Classical solution] $ $\\
For any $u_0 \in H(\div;\Omega)$ and $p_0 \in H_0^1(\Omega)$ there exists a unique classical solution 
\begin{align*}
(u,p)\in C^1(0,T;L^2(\Omega)^d\times L^2(\Omega)) \cap C(0,T;H(\div;\Omega)\times H^1_0(\Omega))
\end{align*}
of the system \eqref{eq:sysm1}--\eqref{eq:sysm4} and its norm can be bounded by the norm of the data. 
\end{lemma}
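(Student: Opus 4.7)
The plan is to recast the problem as an abstract Cauchy problem on the Hilbert space $X = L^2(\Omega)^d \times L^2(\Omega)$ and apply Stone's theorem. Define the operator $A : D(A) \subset X \to X$ by $A(u,p) = (-\grad p, -\div u)$ with domain $D(A) = H(\div;\Omega) \times H_0^1(\Omega)$. The system \eqref{eq:sysm1}--\eqref{eq:sysm4} is then equivalent to the abstract Cauchy problem $\tfrac{d}{dt}(u,p) = A(u,p)$ with $(u,p)(0) = (u_0,p_0)$, and the assertion will follow from the standard semigroup result for anti-self-adjoint generators.

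First I would verify that $A$ is densely defined, which is immediate since both $H(\div;\Omega)$ and $H_0^1(\Omega)$ contain $C_c^\infty$-functions and are thus dense in $L^2$. Next, I would check that $A$ is skew-symmetric: for $(u,p) \in D(A)$ the definition of the weak divergence together with $p|_{\partial\Omega} = 0$ (which is built into $H_0^1$) gives
\begin{equation*}
(A(u,p),(u,p))_X = -(\grad p, u) - (\div u, p) = 0.
\end{equation*}

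The main step is to verify the range condition for $I \pm A$. Given $(f,g) \in X$, solving $u + \grad p = f$ and $p + \div u = g$ reduces by elimination of $u$ to the elliptic problem
\begin{equation*}
p - \Delta p = g - \div f \quad \text{in } \Omega, \qquad p = 0 \quad \text{on } \partial\Omega,
\end{equation*}
which is uniquely solvable in $H_0^1(\Omega)$ by the Lax-Milgram lemma, where the right hand side is interpreted in $H^{-1}(\Omega)$. One then sets $u = f - \grad p \in L^2(\Omega)^d$ and observes that $\div u = \div f - \Delta p = g - p \in L^2(\Omega)$, so $u \in H(\div;\Omega)$. The argument for $-A$ is identical. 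This is the one non-routine step; the rest is bookkeeping.

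By Stone's theorem, $A$ therefore generates a $C_0$-group $(T(t))_{t\in\R}$ of unitary operators on $X$. For initial data $(u_0,p_0) \in D(A)$, the orbit $(u,p)(t) := T(t)(u_0,p_0)$ belongs to $C^1(0,T;X) \cap C(0,T;D(A))$, which is exactly the regularity claimed in the lemma, and solves \eqref{eq:sysm1}--\eqref{eq:sysm4} in the classical sense. Uniqueness is immediate from the semigroup formulation, and the energy identity
\begin{equation*}
\|u(t)\|^2 + \|p(t)\|^2 = \|u_0\|^2 + \|p_0\|^2,
\end{equation*}
obtained by testing \eqref{eq:sysm1}--\eqref{eq:sysm2} with $(u,p)$ (or directly from unitarity of $T(t)$), together with the corresponding identity applied to $\dt(u,p) = A(u,p)$, provides the bound of the solution in the stated norms by the $D(A)$-norm of the initial data.
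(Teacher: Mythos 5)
Your proposal is correct and is precisely the ``standard arguments of semi-group theory'' that the paper invokes without writing out (it simply cites Evans and Pazy): the skew-adjointness of $A(u,p)=(-\grad p,-\div\,u)$ on $D(A)=H(\div;\Omega)\times H_0^1(\Omega)$, the range condition via the coercive elliptic problem $p-\Delta p = g-\div f$, and Stone's theorem yielding the unitary group and the stated regularity and bounds. Nothing further is needed.
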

The spaces $L^2(\Omega)$, $H^1(\Omega)$, and $H(\div;\Omega)=\{u\in L^2(\Omega)^d\,|\, \div\, u \in L^2(\Omega)\}$ denote the usual Lebesgue and Sobolev spaces, and functions in $H_0^1(\Omega)$ additionally have zero trace on the boundary. 
The spaces $C^k([0,T];X)$ and $W^{k,p}(0,T;X)$ consist of functions of time with values in a Hilbert space $X$; see \cite{Evans98} for details and further notation.
By standard arguments, one obtains the following characterization of classical solutions.
\begin{lemma}[Variational characterization] $ $\\
Let $(u,p)$ denote a classical solution of \eqref{eq:sysm1}--\eqref{eq:sysm4}. Then for all $0\le t\le T$ there holds
\begin{alignat}{5}
(\dt u(t),v)_\Omega-(p(t),\div \,v)_\Omega &=0 \qquad \forall v\in H(\div;\Omega), \label{eq:var1} \\ 
(\dt p(t),q)_\Omega+(\div\, u(t),q)_\Omega &=0 \qquad \forall q\in L^2(\Omega), \label{eq:var2} 
\end{alignat}
where $(\cdot,\cdot)_\Omega$ denotes the standard scalar product of $L^2(\Omega)$.
\end{lemma}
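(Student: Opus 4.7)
The proof is a direct verification: we test the two strong equations \eqref{eq:sysm1}--\eqref{eq:sysm2} against smooth functions, integrate over $\Omega$, and use integration by parts to recover \eqref{eq:var1}--\eqref{eq:var2}; the boundary condition \eqref{eq:sysm3} is used to eliminate a surface term. Because of the stated regularity $(u,p)\in C^1(0,T;L^2\times L^2)\cap C(0,T;H(\div)\times H^1_0)$, every integral below is well-defined pointwise in $t$.

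First I would handle \eqref{eq:var2}, which is immediate. Fix $t\in[0,T]$ and $q\in L^2(\Omega)$. Multiplying \eqref{eq:sysm2} by $q$ and integrating over $\Omega$ yields
\begin{equation*}
(\dt p(t),q)_\Omega + (\div\, u(t),q)_\Omega = 0,
\end{equation*}
which is exactly \eqref{eq:var2}. Both terms make sense since $\dt p(t)\in L^2(\Omega)$ and $\div\, u(t)\in L^2(\Omega)$ by the assumed regularity.

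Next I would treat \eqref{eq:var1}. Fix $v\in H(\div;\Omega)$. Testing \eqref{eq:sysm1} against $v$ and integrating gives
\begin{equation*}
(\dt u(t),v)_\Omega + (\grad p(t),v)_\Omega = 0.
\end{equation*}
The key step is the integration-by-parts identity
\begin{equation*}
(\grad p(t),v)_\Omega = -(p(t),\div\, v)_\Omega + \langle p(t),v\cdot\nn\rangle_{\partial\Omega},
\end{equation*}
which is valid for $p(t)\in H^1(\Omega)$ and $v\in H(\div;\Omega)$, with the duality pairing interpreted between $H^{1/2}(\partial\Omega)$ and $H^{-1/2}(\partial\Omega)$. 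Since $p(t)\in H^1_0(\Omega)$ by the regularity assumption, the trace of $p(t)$ on $\partial\Omega$ vanishes, so the boundary term drops out. Substituting back yields \eqref{eq:var1}.

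There is no real obstacle; the only subtle point is selecting the correct version of Green's formula, namely the one for $H(\div)$--$H^1$ pairs, and observing that the $H^1_0$ regularity of $p(t)$ (rather than merely $H^1$) is precisely what is needed to eliminate the boundary integral without extra hypotheses on $v$.
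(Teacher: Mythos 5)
Your proof is correct and is precisely the ``standard argument'' the paper invokes without writing out: the paper states this lemma with no explicit proof, and your verification---testing the strong equations, applying the $H(\div)$--$H^1$ Green's formula, and using $p(t)\in H^1_0(\Omega)$ to kill the boundary pairing---is exactly what is intended. No gaps.
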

This weak characterization will be the starting point for all our further considerations.

\section{Semi-discretization} \label{sec:semi}
In the sequel, we always assume that $\Omega$ is polyhedral in the sequel and denote by $\Th = \{K\}$ a geometrically conforming 
simplicial mesh of this domain \cite{ErnGuermond}.
As finite dimensional approximations for the function spaces $L^2(\Omega)$ and $H(\div;\Omega)$, we utilize
\begin{align}
V_h = \P_1(\Th)^d \cap H(\div;\Omega)
\qquad \text{and} \qquad
Q_h=\P_0(\T_h).
\end{align}
Here $\P_k(\Th) = \{q \in L^2(\Omega) : q|_K \in \P_k(K)\}$ denotes the space of piecewise polynomials of degree $\le k$ over the mesh $\T_h$. 
Let us note that $V_h$ amounts to the lowest order $H(\div)$-conforming $\BDM$-space \cite{BoffiBrezziFortin13,BrezziDouglasMarini85}. 
In particular, functions in $v_h \in V_h$ have continuous normal components across element interfaces, while functions $q_h \in Q_h$ are, in general, completely discontinuous across interfaces. 

In our discretization schemes, we will make use of the modified scalar product
\begin{align*}
(u_h,v_h)_{h,\Omega} = \sum\nolimits_K \int_K I_K( u_h \cdot v_h) dx,
\end{align*}
where $I_K : C(K) \to \P_1(K)$ denotes the standard nodal interpolation operator on the element $K$. 
Thus $(u_h,v_h)_{h,\Omega}$ simply amounts to integrating $(u_h,v_h)_\Omega$ inexactly on every element by the vertex rule, which can be interpreted as a mass-lumping strategy. By elementary computations \cite{WheelerYotov06}, one can verify that $(\cdot,\cdot)_{h,\Omega}$ induces a norm $\|\cdot\|_{h,\Omega}$ on $V_h$, which is equivalent to the standard norm $\|\cdot\|_{L^2(\Omega)}$, i.e., 
\begin{align} \label{eq:norm}
\frac{1}{2}\|v_h\|_{h,\Omega}\leq \|v_h\|_{L^2(\Omega)} \leq \|v_h\|_{h,\Omega}, 
\qquad \forall v_h \in V_h.
\end{align}
For the semi-discretization of problem \eqref{eq:sysm1}--\eqref{eq:sysm4}, 
we then consider the following method.
\begin{problem}[Galerkin semi-discretization]$ $\label{problem:semidiscrete}\\
Given $u_{h,0} \in V_h$ and $p_{h,0} \in Q_h$, 
find $(u_h,p_h) \in C^1(0,T;V_h \times Q_h)$ with
\begin{align} \label{eq:sysvd1}
u_{h}(0)=u_{0,h}\quad\text{ and }\quad p_{h}(0)=p_{0,h},  
\end{align}
and such that for all $0 \le t \le T$ there holds
\begin{alignat}{2}
(\dt u_h(t),v_h)_{h,\Omega}-(p_h(t),\div\, v_h)_\Omega &= 0 \qquad \forall v_h\in V_h, \label{eq:sysvd2} \\
(\dt p_h(t),q_h)_\Omega+(\div\, u_h(t),q_h)_\Omega &= 0 \qquad \forall q_h\in Q_h. \label{eq:sysvd3}
\end{alignat}
If not stated otherwise, the initial values will be chosen according to \eqref{eq:initial} below.
\end{problem}
\begin{remark} \rm 
By choosing a basis for the approximation spaces $V_h$ and $Q_h$, Problem~\ref{problem:semidiscrete} 
can be transformed into a regular linear system of ordinary differential equations
\begin{alignat}{2}
M \underline{\dot u}(t) - B^\top \underline p(t) &= 0,  \qquad & \underline u(0) = \underline{u}_0, \label{eq:lin1}\\
D \underline{\dot p}(t) + B \underline u(t) &= 0, & \underline p(0) = \underline{p}_0, \label{eq:lin2}
\end{alignat}
and the existence of a unique solution $(u_h,p_h)$ follows from the Picard-Lindelöf theorem. 
As shown in \cite{WheelerYotov06}, an appropriate choice for the basis functions for the spaces $V_h$ and $Q_h$ 
leads to (block)-diagonal mass matrices $M$ and $D$. The replacement of the original scalar product $(u_h,v_h)_\Omega$ by the modified one $(u_h,v_h)_{h,\Omega}$ therefore yields a \emph{mass-lumping} strategy which allows us to integrate the system \eqref{eq:lin1}--\eqref{eq:lin2} efficiently in time. 
\end{remark}
The remainder of this section is devoted to the analysis of the semi-discrete method.

\subsection{Auxiliary results}

A common strategy in the analysis of Galerkin approximations of time dependent problems, see e.g. \cite{CowsarDupontWheeler96,DouglasDupontWheeler78,Dupont73}, is to split the error via
\begin{align} \label{eq:split}
\|u - u_h\| \le \|u - u_h^*\| +  \|u_h^* - u_h\| 
\quad \text{and} \quad 
\|p - p_h\| \le \|p - p_h^*\| +  \|p_h^* - p_h\|
\end{align}
into an \emph{approximation error} and a \emph{discrete error} component.
Our analysis relies on a particular choice for the functions $u_h^*$ and $p_h^*$, which is based on the following construction.
\begin{problem}[Inexact elliptic projection] \label{problem:inexactelliptic} $ $ \\  
For any $w \in H(\div;\Omega)$ and $r \in L^2(\Omega)$, 
find $(w_h,r_h)\in V_h \times Q_h$ such that 
\begin{alignat}{2}
(w_h,v_h)_{h,\Omega}-(r_h,\div\, v_h)_\Omega &= (w,v_h)_\Omega - (r,\div\,v_h)_\Omega &&\qquad \forall v_h\in V_h, \label{eq:syswy1} \\
            (\div\,w_h,q_h)_\Omega &= (\div\,w,q_h)_\Omega        &&\qquad \forall q_h\in Q_h. \label{eq:syswy2} 
\end{alignat}
\end{problem}
The existence of a unique solution $(w_h,r_h)$ follows from \eqref{eq:norm} and the choice of the spaces $V_h$ and $Q_h$, see e.g. \cite[Proposition 7.1.1]{BoffiBrezziFortin13} for details. 
For our subsequent analysis, we further require some geometric regularity conditions, i.e., 
\begin{itemize}
\item[(A1)] the mesh $\Th$ is uniformly shape regular, i.e. there exists a $\gamma>0$ such that the diameter $h_K$ of an element $K$ and the radius $\rho_K$ of the largest ball inscribed in $K$ satisfy $\gamma h_K\le\rho_K\le h_K$ uniformly for all elements $K\in\Th$.\label{item:assumption1}
\end{itemize}
This assumption is standard in finite element analysis \cite{ErnGuermond}. 
Since some of our results are based on duality arguments, we introduce as a second standing assumption that
\begin{itemize}
\item[(A2)] $\Omega$ is convex.
\end{itemize}
Note that this condition will be required only for part of our results. 
With these two assumptions at hand, one can prove the following approximation properties.
\begin{lemma}[Estimates for the inexact elliptic projection] \label{lemma:inexact} $ $\\
Let (A1) hold and let $(w_h,r_h)$ denote the solution of Problem~\ref{problem:inexactelliptic}. Then
\begin{align*}
\|w - w_h\|_{L^2(\Omega)} + \|\pi_h^0 r\ - r_h\|_{L^2(\Omega)}&\le C h \|w\|_{H^1(\Omega)}, \\
\|r-r_h\|_{L^2(\Omega)} &\le C h \big(\|w\|_{H^1(\Omega)} + \|r\|_{H^1(\Omega)}\big),
\end{align*}
whenever $w$ and $r$ are sufficiently smooth. 
If also (A2) holds, then additionally
\begin{align*}
\|\pi_h^0 r\ - r_h\|_{L^2(\Omega)} \le C h^2 \big( \|w\|_{H^1(\Omega)} + \|\div\,w\|_{H^1(\Omega)} \big).
\end{align*}
Here $\pi_h^0$ denotes the standard $L^2$-projection onto $Q_h$. 
The constants $C$ in the estimates only depend on the domain $\Omega$ and the shape regularity constant $\gamma$.
\end{lemma}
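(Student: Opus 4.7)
My plan is to split the error via the canonical $\BDM_1$ interpolant $\Pi_h : H^1(\Omega)^d \to V_h$ and the $L^2$-projection $\pi_h^0 : L^2(\Omega) \to Q_h$, using the commuting-diagram identity $\div \Pi_h w = \pi_h^0 \div w$ and the standard approximation bounds $\|w - \Pi_h w\|_{L^2(\Omega)} \le C h \|w\|_{H^1(\Omega)}$ and $\|r - \pi_h^0 r\|_{L^2(\Omega)} \le C h \|r\|_{H^1(\Omega)}$. Setting $e_h^w := w_h - \Pi_h w \in V_h$ and $e_h^r := r_h - \pi_h^0 r \in Q_h$, subtracting the equations of Problem~\ref{problem:inexactelliptic}, and using $\div v_h \in Q_h$, I arrive at
\begin{align*}
(e_h^w, v_h)_{h,\Omega} - (e_h^r, \div v_h)_\Omega &= (w - \Pi_h w, v_h)_\Omega - \sigma_h(\Pi_h w, v_h), \\
(\div e_h^w, q_h)_\Omega &= 0,
\end{align*}
where $\sigma_h(u, v) := (u, v)_{h,\Omega} - (u, v)_\Omega$ is the quadrature defect. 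The second equation forces $\div e_h^w \equiv 0$. The whole argument rests on the sharp bound
\begin{align*}
|\sigma_h(u_h, v_h)| \le C h^2 \|u_h\|_{H^1(\Th)} \|v_h\|_{H^1(\Th)}, \qquad u_h, v_h \in V_h,
\end{align*}
obtained by a Bramble--Hilbert argument on the reference element, exploiting that the vertex rule is exact on $\P_1$.

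For the first-order estimate I would test with $v_h = e_h^w$: the divergence term drops, the norm equivalence \eqref{eq:norm} bounds $\|e_h^w\|_{L^2}^2$ from above by the left-hand side, and the right-hand side is controlled by $C h \|w\|_{H^1(\Omega)} \|e_h^w\|_{L^2}$ once the quadrature estimate is combined with the $H^1$-stability of $\Pi_h$ on the data side and an inverse inequality on the test side. For $e_h^r$ I apply the discrete inf-sup condition for the $\BDM_1$--$\P_0$ pair: there exists $v_h \in V_h$ with $\div v_h = e_h^r$ and $\|v_h\|_{L^2} \le C \|e_h^r\|_{L^2}$, and substituting into the error equation yields $\|e_h^r\|_{L^2} \le C h \|w\|_{H^1(\Omega)}$. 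The middle estimate then follows immediately from the triangle inequality $\|r - r_h\| \le \|r - \pi_h^0 r\| + \|\pi_h^0 r - r_h\|$ and the standard approximation of $\pi_h^0$.

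For the super-convergence bound under (A2) I would use a duality argument. Given $g := \pi_h^0 r - r_h \in Q_h$, elliptic regularity on the convex domain provides a pair $(\phi, \psi) \in H^1(\Omega)^d \times (H^2 \cap H_0^1)(\Omega)$ with $\phi = -\grad \psi$, $\div \phi = g$, and $\|\phi\|_{H^1(\Omega)} + \|\psi\|_{H^2(\Omega)} \le C \|g\|_{L^2(\Omega)}$. Testing with $v_h = \Pi_h \phi$, using $\div \Pi_h \phi = \pi_h^0 g = g$, and exploiting the identity $(e_h^w, \phi)_\Omega = (\div e_h^w, \psi)_\Omega = 0$ obtained from integration by parts (since $\psi \in H_0^1$ and $\div e_h^w = 0$), I express $\|g\|_{L^2}^2$ as a sum of the four residuals $(e_h^w, \phi - \Pi_h \phi)_\Omega$, $\sigma_h(e_h^w, \Pi_h \phi)$, $\sigma_h(\Pi_h w, \Pi_h \phi)$, and $(w - \Pi_h w, \Pi_h \phi)_\Omega$. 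The first three are bounded by $C h^2 \|w\|_{H^1(\Omega)} \|g\|_{L^2(\Omega)}$, using the $O(h^2)$ quadrature bound combined with the first-order estimate on $\|e_h^w\|_{L^2}$ (which, via the inverse inequality, also controls $\|e_h^w\|_{H^1(\Th)}$) and the standard approximation properties of $\Pi_h$. The remaining term splits as $(w - \Pi_h w, \phi)_\Omega - (w - \Pi_h w, \phi - \Pi_h \phi)_\Omega$; the second piece is another $O(h^2)$ product of interpolation errors, and the first is reduced by an additional integration by parts,
\begin{align*}
(w - \Pi_h w, \phi)_\Omega = (\div(w - \Pi_h w), \psi)_\Omega = (\div w - \pi_h^0 \div w, \psi - \pi_h^0 \psi)_\Omega,
\end{align*}
which contributes the extra $C h^2 \|\div w\|_{H^1(\Omega)} \|g\|_{L^2(\Omega)}$ to the bound.

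The principal technical obstacle is the careful bookkeeping of the quadrature perturbation. The sharp two-power-of-$h$ bound on $\sigma_h$ is essential, and at every step I must distribute these two powers so that they are never both absorbed by an inverse inequality when converting $H^1(\Th)$-norms of discrete quantities back to $L^2$. Once this bookkeeping is in place, the rest of the argument is a fairly standard interplay of the BDM commuting diagram, the discrete inf-sup condition, and Aubin--Nitsche duality on the convex domain.
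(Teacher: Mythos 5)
Your proposal is correct and follows essentially the same route as the source the paper relies on: the paper's own proof is a one-line citation to Wheeler and Yotov \cite{WheelerYotov06}, and your argument (splitting via the canonical $\BDM_1$ interpolant and the commuting-diagram property, the $O(h^2)$ vertex-rule quadrature bound from a Bramble--Hilbert argument, the discrete inf-sup condition for the scalar error, and Aubin--Nitsche duality with $\phi=-\grad\psi$ for the superconvergence under convexity) is precisely the argument given there. The only bookkeeping point to watch is that the lemma assumes only (A1), so the inverse inequalities absorbing one power of $h$ in the quadrature terms should be applied elementwise with local $h_K$ rather than globally, which your elementwise quadrature bound already permits.
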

\begin{proof}
The assertions follow directly from the results of Wheeler and Yotov in \cite{WheelerYotov06}.
\end{proof}

\subsection{Auxiliary functions and approximation error estimates}

We can now construct the auxiliary functions $u_h^*$ and $p_h^*$ 
needed for our analysis as follows.
\begin{problem}[Auxiliary functions] \label{problem:auxfunctions} $ $\\
Let $(u,p)$ denote the unique solution of problem \eqref{eq:sysm1}--\eqref{eq:sysm4}.\\
Then find $u_h^* \in C^1(0,T;V_h)$, $p_h^* \in C(0,T;Q_h)$, and $r_h^*(0) \in Q_h$ satisfying
\begin{alignat*}{2}
(u_h^*(0),v_h)_{h,\Omega} - (r_h^*(0), \div\,v_h)_\Omega &= (u(0),v_h)_\Omega \qquad &&\forall v_h \in V_h, \\
(\div \, u_h^*(0),q_h)_\Omega &= (\div \, u(0), q_h)_\Omega \qquad &&\forall q_h \in Q_h,
\end{alignat*}
and such that for all $0\le t\le T$ there holds
\begin{alignat*}{2}
(\dt u_h^*(t),v_h)_{h,\Omega} - (p_h^*(t), \div\,v_h)_\Omega &= 
0  &&\qquad\forall v_h \in V_h, \\
(\div \, \dt u_h^*(t),q_h)_\Omega &= (\div \,\dt u(t), q_h)_\Omega &&\qquad\forall q_h \in Q_h.
\end{alignat*}
\end{problem}
\noindent
Note that due to \eqref{eq:var1}, the third equation in Problem~\ref{problem:auxfunctions} could also be written as
\begin{align*}
(\dt u_h^*(t),v_h)_{h,\Omega} - (p_h^*(t), \div\,v_h)_\Omega=(\dt u(t),v_h)_\Omega - (p(t), \div \, v_h)_\Omega
\qquad \forall v_h \in V_h.
\end{align*}
From the estimates for the inexact elliptic projection stated above 
and some elementary computations, one can directly deduce the following properties. 
\begin{lemma}[Approximation error estimates] \label{lemma:approximations} $ $\\
Let (A1) hold and let $(u_h^*,p_h^*)$ be the solution of Problem~\ref{problem:auxfunctions}. 
Then
\begin{align*}
(\div\,u(t)- \div \,u^*_h(t),q_h)_\Omega &= 0 \qquad\forall q_h\in Q_h.
\end{align*}
Whenever $(u,p)$ is sufficiently smooth, one further has
\begin{align*}
\|u - u_h^*\|_{W^{k,p}(0,T;L^2(\Omega))} + \|p - p_h^*\|_{W^{k,p}(0,T;L^2(\Omega))} &\le C h \|u\|_{W^{k+1,p}(0,T;H^1(\Omega))}.
\end{align*}
If also (A2) holds, then
\begin{align*}
\|\pi_h^0 p - p_h^*\|_{W^{k,p}(0,T;L^2(\Omega))} &\le C h^2 
\big( \|u\|_{W^{k+1,p}(0,T;H^1(\Omega))}+ \|\div \, u\|_{W^{k+1,p}(0,T;H^1(\Omega))} \big),
\end{align*}
for all $k \ge 0$ and $1 \le p \le \infty$. The constant $C$ may depend on $k$ and $p$, but is independent of $u$, $p$ and $h$. 
\end{lemma}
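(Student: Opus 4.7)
The key observation is that the defining equations of Problem~\ref{problem:auxfunctions} identify $(\dt u_h^*(t), p_h^*(t))$ pointwise in time with an inexact elliptic projection. Rewriting the right-hand side of the third defining equation of Problem~\ref{problem:auxfunctions} by means of the variational identity \eqref{eq:var1}, one sees that for every $t \in [0,T]$ the pair $(\dt u_h^*(t), p_h^*(t))$ is the solution of Problem~\ref{problem:inexactelliptic} with data $(w,r) = (\dt u(t), p(t))$, while the initial conditions state that $(u_h^*(0), r_h^*(0))$ is the inexact elliptic projection of $(u(0), 0)$. The first identity, $(\div u(t) - \div u_h^*(t), q_h)_\Omega = 0$, then follows by integrating the fourth equation of Problem~\ref{problem:auxfunctions} in time from $0$ to $t$ and using the initial condition $(\div u_h^*(0), q_h)_\Omega = (\div u(0), q_h)_\Omega$ to cancel the contributions at $t=0$.

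For the $\mathcal{O}(h)$ estimates, I differentiate the defining system $(j-1)$ times in $t$, which by linearity of Problem~\ref{problem:inexactelliptic} shows that $(\dt^j u_h^*(t), \dt^{j-1} p_h^*(t))$ is the inexact elliptic projection of $(\dt^j u(t), \dt^{j-1} p(t))$ for every $j \ge 1$. Applying the first two estimates of Lemma~\ref{lemma:inexact} pointwise in $t$ then yields
\begin{align*}
\|\dt^j u(t) - \dt^j u_h^*(t)\|_{L^2(\Omega)} &\le C h \|\dt^j u(t)\|_{H^1(\Omega)}, \\
\|\dt^{j-1} p(t) - \dt^{j-1} p_h^*(t)\|_{L^2(\Omega)} &\le C h \bigl( \|\dt^j u(t)\|_{H^1(\Omega)} + \|\dt^{j-1} p(t)\|_{H^1(\Omega)} \bigr).
\end{align*}
Differentiating the PDE \eqref{eq:sysm1} gives $\grad \dt^{j-1} p = -\dt^j u$; combined with the Poincaré inequality (applicable because $p$ has vanishing trace on $\partial \Omega$), this produces $\|\dt^{j-1} p(t)\|_{H^1(\Omega)} \le C \|\dt^j u(t)\|_{L^2(\Omega)}$, so the second estimate simplifies to a bound by $Ch\|\dt^j u(t)\|_{H^1(\Omega)}$. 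Letting $j$ range over $1, \ldots, k+1$ controls all mixed time derivatives of $p - p_h^*$ of order $\le k$ and all time derivatives of $u - u_h^*$ of order between $1$ and $k+1$. The remaining case, $u - u_h^*$ itself, is handled by the identity
\begin{align*}
u(t) - u_h^*(t) = \bigl( u(0) - u_h^*(0) \bigr) + \int_0^t \bigl( \dt u(s) - \dt u_h^*(s) \bigr)\, ds,
\end{align*}
together with the initial bound $\|u(0) - u_h^*(0)\|_{L^2(\Omega)} \le C h \|u(0)\|_{H^1(\Omega)}$ obtained from Lemma~\ref{lemma:inexact} applied to the projection of $(u(0), 0)$. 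Taking $L^p(0,T)$ norms in time then yields the first assertion.

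The refined $\mathcal{O}(h^2)$ estimate under (A2) follows from the very same argument by using the third estimate of Lemma~\ref{lemma:inexact} in place of the second, which produces the pointwise bound $\|\pi_h^0 \dt^{j-1} p(t) - \dt^{j-1} p_h^*(t)\|_{L^2(\Omega)} \le C h^2 \bigl( \|\dt^j u(t)\|_{H^1(\Omega)} + \|\div \dt^j u(t)\|_{H^1(\Omega)} \bigr)$; the additional $\|\div u\|_{W^{k+1,p}(0,T;H^1(\Omega))}$ term on the right-hand side of the final claim originates from this divergence contribution. I expect the main technical obstacle to be the careful bookkeeping between the indices of the time derivatives applied to $u$ and to $p$, together with the repeated use of the PDE to absorb $\|p\|_{H^1}$-type quantities into $\|\dt u\|_{L^2}$-type ones, so that the final right-hand sides depend only on $u$ (and, for the $h^2$ estimate, additionally on $\div u$).
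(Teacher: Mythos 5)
Your proposal is correct and follows essentially the same route as the paper's (one-line) proof: identify $(\dt u_h^*(t),p_h^*(t))$ and $(u_h^*(0),r_h^*(0))$ as inexact elliptic projections, apply Lemma~\ref{lemma:inexact} pointwise in time, integrate to recover the undifferentiated velocity error, and formally differentiate for $k\ge 1$. You also correctly supply the one step the paper leaves implicit, namely absorbing the $\|p\|_{H^1}$-term from the second estimate of Lemma~\ref{lemma:inexact} into $\|\dt u\|_{H^1}$ via the equation $\grad p=-\dt u$ and the Poincar\'e inequality.
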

\begin{proof}
The estimates for $k=0$ follow from the specific construction, using Lemma~\ref{lemma:inexact}, and integration over time. The assertions for $k \ge 1$ are obtained by formal differentiation.
\end{proof}
The results of Lemma~\ref{lemma:approximations} also hold for other choices of $u_h^*(0)$ as long as 
\begin{align*}
(\div\,u(0)- \div \,u^*_h(0),q_h)_\Omega &= 0 \qquad\forall q_h\in Q_h
\end{align*}
is satisfied. An example is $u_h^*(0) =  \rho_h u(0)$ where $\rho_h$ denotes the standard interpolation operator for the $\BDM_1$ space \cite{BoffiEtAl08}. 
The special choice used above will however be crucial for the analysis of our post-processing strategy investigated in Sections~\ref{sec:semi-post} and \ref{sec:time-post}.

\subsection{Discrete error}

As a second step in our analysis of the semi-discrete problem, we now derive estimates for the discrete error components in the splitting \eqref{eq:split}. In the sequel, we always assume that $(u,p)$ is a sufficiently smooth solution of \eqref{eq:sysm1}--\eqref{eq:sysm4} and that $(u_h,p_h)$ is the solution of Problem~\ref{problem:semidiscrete} with initial values given by
\begin{align} \label{eq:initial}
u_{h,0}=u_h^*(0) \qquad \text{and} \qquad p_{h,0}=\pi_h^0p(0). 
\end{align}

Based on the special construction of the auxiliary functions $(u_h^*,p_h^*)$, 
we can establish the following estimates for the discrete error contributions. 
\begin{lemma}[Estimate for the discrete error] \label{lemma:est} $ $\\
Let (A1) hold. Then 
\begin{align*}
\|u_h - u_h^*\|_{L^\infty(0,T;L^2(\Omega))} + \|p_h - p_h^*\|_{L^\infty(0,T;L^2(\Omega))}
\leq C \|\pi_h^0 p - p_h^*\|_{W^{1,1}(0,T;L^2(\Omega))}.
\end{align*}
The constant $C$ only depends on the domain and the shape regularity of the mesh.
\end{lemma}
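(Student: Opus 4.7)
The plan is to derive error equations for the discrete error pair $e_h^u := u_h - u_h^*$ and $e_h^p := p_h - p_h^*$, test them with $(v_h,q_h) = (e_h^u,e_h^p)$ to obtain a discrete energy identity, and then conclude by Gronwall.

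Subtracting the defining equation for $u_h^*$ from equation \eqref{eq:sysvd2} yields directly
\begin{align*}
(\dt e_h^u(t),v_h)_{h,\Omega} - (e_h^p(t),\div\, v_h)_\Omega = 0 \qquad\forall v_h \in V_h.
\end{align*}
The second error equation requires more care, since Problem~\ref{problem:auxfunctions} does not supply an evolution equation for $p_h^*$ directly. Instead I would use the divergence identity from Problem~\ref{problem:auxfunctions} together with the continuous PDE \eqref{eq:sysm2}, which give $(\div\, u_h^*(t),q_h)_\Omega = (\div\, u(t),q_h)_\Omega = -(\dt p(t),q_h)_\Omega = -(\dt \pi_h^0 p(t),q_h)_\Omega$ for all $q_h \in Q_h$. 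Combining this with \eqref{eq:sysvd3} and splitting $\dt(\pi_h^0 p - p_h) = \dt(\pi_h^0 p - p_h^*) - \dt e_h^p$ produces
\begin{align*}
(\dt e_h^p(t),q_h)_\Omega + (\div\, e_h^u(t),q_h)_\Omega = (\dt(\pi_h^0 p - p_h^*)(t),q_h)_\Omega \qquad\forall q_h \in Q_h.
\end{align*}

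Testing the first error equation with $v_h = e_h^u$ and the second with $q_h = e_h^p$ and adding, the divergence terms cancel because $\div e_h^u \in Q_h$, leaving the identity
\begin{align*}
\tfrac{1}{2}\dt\bigl(\|e_h^u(t)\|_{h,\Omega}^2 + \|e_h^p(t)\|_{L^2(\Omega)}^2\bigr) = (\dt(\pi_h^0 p - p_h^*)(t), e_h^p(t))_\Omega.
\end{align*}
Bounding the right-hand side by Cauchy--Schwarz and using the standard trick $\dt\sqrt{E} \le \|\dt(\pi_h^0 p - p_h^*)\|_{L^2(\Omega)}$ for $E(t) := \|e_h^u(t)\|_{h,\Omega}^2 + \|e_h^p(t)\|_{L^2(\Omega)}^2$, integration in time yields
\begin{align*}
\sqrt{E(t)} \le \sqrt{E(0)} + \|\dt(\pi_h^0 p - p_h^*)\|_{L^1(0,T;L^2(\Omega))}.
\end{align*}

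To close the estimate I would check the initial data: the choice \eqref{eq:initial} gives $e_h^u(0) = 0$, while $e_h^p(0) = \pi_h^0 p(0) - p_h^*(0)$ is controlled by $\|\pi_h^0 p - p_h^*\|_{L^\infty(0,T;L^2(\Omega))}$, which in turn is bounded by the $W^{1,1}$-norm on the right-hand side via the embedding $W^{1,1}(0,T;L^2(\Omega)) \hookrightarrow C([0,T];L^2(\Omega))$. The asserted estimate then follows after applying the norm equivalence \eqref{eq:norm} to pass from $\|\cdot\|_{h,\Omega}$ to $\|\cdot\|_{L^2(\Omega)}$. The main obstacle I anticipate is the second error equation: getting the sign and the form of the perturbation term $\dt(\pi_h^0 p - p_h^*)$ requires carefully exploiting that $p_h^*$ has no intrinsic time equation of its own and instead inherits its dynamics through the divergence constraint on $u_h^*$ combined with the continuous PDE; once this step is in place, the rest is a standard energy argument.
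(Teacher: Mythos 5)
Your proposal is correct and follows essentially the same route as the paper: the same two discrete error equations (with the crucial observation that the mass-lumping perturbation cancels in the first one and the residual $\dt(\pi_h^0 p - p_h^*)$ appears in the second via the divergence constraint on $u_h^*$ and the continuous PDE), the same energy identity from testing with $(e_h^u,e_h^p)$, and the same treatment of the initial data through \eqref{eq:initial} and the embedding $W^{1,1}(0,T;L^2(\Omega))\hookrightarrow C([0,T];L^2(\Omega))$. The only cosmetic difference is that you close the estimate with the $\frac{d}{dt}\sqrt{E}$ trick, whereas the paper integrates, takes the supremum in time, and absorbs via Young's inequality; both yield the stated bound.
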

\begin{proof}
Using the variational characterization of $(u,p)$ and $(u_h,p_h)$, 
as well as the definition of $(u_h^*,p_h^*)$, one can see that
for all $v_h \in V_h$ and $q_h \in Q_h$, and all $0 \le t \le T$ there holds
\begin{alignat*}{2}
(\dt u_h(t) - \dt u_h^*(t),v_h)_{h,\Omega}-(p_h(t)-p_h^*(t),\div\, v_h)_\Omega &= 
0, \\
(\dt p_h(t) - \dt p_h^*(t),q_h)_\Omega+(\div\, (u_h(t) - u_h^*(t)),q_h)_\Omega &= (\pi_h^0 \dt p(t) - \dt p_h^*(t), q_h)_\Omega.
\end{alignat*}
Let us emphasize that the right hand side in the first equation is zero, i.e., the error introduced by mass-lumping does not appear in the discrete error equation. 
By choosing $v_h = u_h(t) - u_h^*(t)$ and $q_h=p_h(t) - p_h^*(t)$, 
and adding the two equations, we obtain
\begin{align*}
\frac{1}{2} \frac{d}{dt} \big(\|u_h(t) - u_h^*(t)\|^2_{h,\Omega} &+ \|p_h(t) - p_h^*(t)\|^2_{L^2(\Omega)} \big)\\
& = (\pi_h^0\dt p(t) - \dt p_h^*(t), p_h(t) - p_h^*(t))_\Omega.
\end{align*}
The right hand side can be estimated by the Cauchy-Schwarz inequality. 
Integration with respect to time and using the choice \eqref{eq:initial} for the initial values further leads to
\begin{align*}
& \|u_h(t) - u_h^*(t)\|_{h,\Omega}^2 +\|p_h(t) - p_h^*(t)\|_{L^2(\Omega)}^2\\
& \le \|\pi_h^0p(0) - p_h^*(0)\|_{L^2(\Omega)}^2+ \|p_h - p_h^*\|_{L^\infty(0,t;L^2(\Omega))} \int_0^t \|\pi_h^0 \dt p(s) - \dt p_h^*(s) \|_{L^2(\Omega)} ds. 
\end{align*}
The assertion now follows by taking the maximum over all $t \in [0,T]$ on both sides, 
using the norm equivalence \eqref{eq:norm}, estimating the right hand side by Young's inequality, 
and finally taking the square root in the resulting estimate.
\end{proof}

\subsection{Error estimates for the semi-discretization}

A combination of the auxiliary results of the previous sections already yields the first main result of our paper.

\begin{theorem}[Error estimate for the semi-discretization]$ $\label{theorem:discreteerror}\\
Let (A1) hold and $(u_h,p_h)$ be the solution of Problem~\ref{problem:semidiscrete} with initial values \eqref{eq:initial}. Then 
\begin{align*}
\|u - u_h\|_{L^{\infty}(0,T;L^2(\Omega))} + \|p - p_h\|_{L^{\infty}(0,T;L^2(\Omega))}
\le h C_0(u)
\end{align*}
with $C_0(u)=C\|u\|_{W^{1,1}(0,T;H^1(\Omega))}$. If also (A2) holds, then 
\begin{align*}
\|u_h - u_h^*\|_{L^\infty(0,T;L^2(\Omega))} + \|p_h - p_h^*\|_{L^\infty(0,T;L^2(\Omega))}&
\le h^2  C_1(u)
\end{align*}
with $C_1(u)= C(\|u\|_{W^{2,1}(0,T;H^1(\Omega))} + \|\div\, u\|_{W^{2,1}(0,T;H^1(\Omega))})$, and additionally
\begin{align*}
\|\pi_h^0 p - p_h\|_{L^\infty(0,T;L^2(\Omega))} & \le h^2  C_1(u).
\end{align*}
\end{theorem}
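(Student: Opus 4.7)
The plan is to combine the two-component error splitting from \eqref{eq:split} with the approximation-error bounds of Lemma~\ref{lemma:approximations} and the discrete-error bound of Lemma~\ref{lemma:est}, applied to the auxiliary pair $(u_h^*,p_h^*)$ of Problem~\ref{problem:auxfunctions}. A key simplification I would exploit throughout is that $p_h^*(t)\in Q_h$, so $\pi_h^0 p_h^* = p_h^*$ and hence $\pi_h^0 p - p_h^* = \pi_h^0(p-p_h^*)$; the $L^2$-stability of $\pi_h^0$ then gives
\begin{align*}
\|\pi_h^0 p - p_h^*\|_{W^{k,p}(0,T;L^2(\Omega))} \le \|p - p_h^*\|_{W^{k,p}(0,T;L^2(\Omega))},
\end{align*}
which will repeatedly allow me to feed Lemma~\ref{lemma:approximations} into Lemma~\ref{lemma:est}.

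For the first-order estimate, I would apply the triangle inequality to get
\begin{align*}
\|u-u_h\|_{L^\infty(L^2)} + \|p-p_h\|_{L^\infty(L^2)} \le \big(\|u-u_h^*\| + \|p-p_h^*\|\big) + \big(\|u_h^*-u_h\| + \|p_h^*-p_h\|\big),
\end{align*}
in the $L^\infty(0,T;L^2(\Omega))$ norm. The approximation-error group is $O(h)$ directly by the first estimate of Lemma~\ref{lemma:approximations} (with $k=0$). For the discrete-error group I apply Lemma~\ref{lemma:est}, use the observation above to replace $\|\pi_h^0 p - p_h^*\|_{W^{1,1}(L^2)}$ by $\|p-p_h^*\|_{W^{1,1}(L^2)}$, and then invoke Lemma~\ref{lemma:approximations} once more (now with $k=1$, $p=1$) to obtain an $O(h)$ bound. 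Collecting terms yields the stated first-order estimate with constant $C_0(u)$ depending on an appropriate $W^{k,p}(0,T;H^1(\Omega))$-norm of $u$.

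For the second-order bound on the discrete error I would apply Lemma~\ref{lemma:est} directly and use (A2) to invoke the improved estimate of Lemma~\ref{lemma:approximations} (with $k=1$, $p=1$), giving
\begin{align*}
\|\pi_h^0 p - p_h^*\|_{W^{1,1}(0,T;L^2(\Omega))} \le C h^2\big(\|u\|_{W^{2,1}(0,T;H^1(\Omega))} + \|\div u\|_{W^{2,1}(0,T;H^1(\Omega))}\big) = h^2 C_1(u),
\end{align*}
which is precisely the claimed bound. For the super-convergence of the projected pressure I would split
\begin{align*}
\|\pi_h^0 p - p_h\|_{L^\infty(L^2)} \le \|\pi_h^0 p - p_h^*\|_{L^\infty(L^2)} + \|p_h^* - p_h\|_{L^\infty(L^2)},
\end{align*}
bound the first term by $h^2 C_1(u)$ using the super-convergent estimate of Lemma~\ref{lemma:approximations}, and absorb the second term into the discrete-error bound just established.

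I do not foresee any genuine obstacle at this stage: all heavy lifting has been done in Lemmas~\ref{lemma:inexact}, \ref{lemma:approximations}, and \ref{lemma:est}. The only subtle point, which is already resolved in the proof of Lemma~\ref{lemma:est}, is that the perturbation introduced by the mass-lumped inner product $(\cdot,\cdot)_{h,\Omega}$ cancels exactly in the discrete error equation thanks to the specific construction of $(u_h^*,p_h^*)$ through the inexact elliptic projection of \cite{WheelerYotov06}. Without this cancellation the $O(h^2)$ bound on the discrete error would fail and only the first-order accuracy of the auxiliary pair would survive.
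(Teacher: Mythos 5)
Your treatment of the second and third estimates coincides with the paper's: Lemma~\ref{lemma:est} fed with the superconvergent bound of Lemma~\ref{lemma:approximations} under (A2) for the discrete error, and a triangle inequality plus another application of Lemma~\ref{lemma:approximations} for $\pi_h^0 p - p_h$. Where you genuinely diverge is the first-order estimate. The paper obtains it from a ``standard'' error splitting in the spirit of \cite{EggerRadu16}, i.e.\ around the exact (non-lumped) projection, and controls the resulting residual in the discrete error equation by the explicit quadrature-error bound $|(u_h,v_h)_{h,\Omega}-(u_h,v_h)_\Omega|\le C\sum_{K}h_K\|u_h\|_{H^1(K)}\|v_h\|_{L^2(K)}$ from \cite{WheelerYotov06}; this yields the stated constant $C_0(u)=C\|u\|_{W^{1,1}(0,T;H^1(\Omega))}$. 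You instead recycle the inexact-projection splitting and route the discrete error through Lemma~\ref{lemma:est}, using the (correct) observation that $\pi_h^0 p-p_h^*=\pi_h^0(p-p_h^*)$ and the $L^2$-stability of $\pi_h^0$ to invoke the first estimate of Lemma~\ref{lemma:approximations} with $k=1$, $p=1$. This is valid and more economical, since no quadrature-error estimate is needed and the whole theorem rests on the same two lemmas; the price is that Lemma~\ref{lemma:est} always measures the data in a $W^{1,1}$-in-time norm, so your first-order constant is proportional to $\|u\|_{W^{2,1}(0,T;H^1(\Omega))}$ rather than $\|u\|_{W^{1,1}(0,T;H^1(\Omega))}$ --- one extra time derivative. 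You flag this yourself, and it is immaterial for the second-order conclusions, but be aware that your route does not literally reproduce the regularity requirement encoded in $C_0(u)$; to recover it one either argues as the paper does, or applies the fundamental theorem of calculus directly to $u-u_h^*$ and the pointwise-in-time bounds of Lemma~\ref{lemma:inexact}.
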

\begin{proof}
The first estimate follows by a standard error splitting, similar to that in \cite{EggerRadu16}, and noting that the quadrature error can be estimated by  \cite{WheelerYotov06}
\begin{align*}
|(u_h,v_h)_{h,\Omega}-(u_h,v_h)_\Omega|\leq C\sum_{K\in\T_h}h_K\|u_h\|_{H^1(K)}\|v_h\|_{L^2(K)}.
\end{align*}
The second estimate can be deduced from Lemma~\ref{lemma:approximations} and Lemma~\ref{lemma:est}, and the splitting of the error according to \eqref{eq:split}. 
By the triangle inequality, we further obtain
\begin{align*}
\|\pi_h^0 p(t) - p_h(t)\|_{L^2(\Omega)}\leq \|\pi_h^0 p(t) - p_h^*(t)\|_{L^2(\Omega)}+\|p_h^*(t) - p_h(t)\|_{L^2(\Omega)}.
\end{align*}
Another application of Lemma~\ref{lemma:approximations} then yields the third estimate of the lemma.
\end{proof}

Let us emphasize that the first estimate in Theorem~\ref{theorem:discreteerror} cannot be improved in general, i.e., the semi-discretization with mass-lumping stated as Problem~\ref{problem:semidiscrete} is only first order accurate; compare with the numerical tests in Section~\ref{sec:num}. 
The second and third estimate of the Theorem, however, indicate that the discrete solution $(u_h,p_h)$ still carries global second order information. This will be the basis for our following considerations.

\section{Post-processing for the semi-discretization} \label{sec:semi-post}

Based on the second order estimates for the discrete error components in Theorem~\ref{theorem:discreteerror}, we are now able to construct piecewise linear improved approximations $\widetilde u_h(t)$ and $\widetilde p_h(t)$ which define true second order approximations for the exact solution. 

\subsection{Post-processing for the pressure}

For post-processing of the pressure, we can simply employ the strategy proposed in \cite{EggerRadu16} for the \emph{conforming} Galerkin approximation of the wave equation. Let us recall that this method is an extension of the post-processing scheme of Stenberg \cite{Stenberg91} for the Poisson problem to the wave equation. 
\begin{problem}[Post-processing for the pressure] $ $\\
For every $0 \le t \le T$, find $\widetilde p_h(t)\in \P_{1}(\Th)$ such that for all $K \in \Th$ there holds
\begin{alignat*}{2}
(\nabla \widetilde p_h(t), \nabla \widetilde q_h)_K 
  &= -(\dt u_h(t), \nabla \widetilde q_h)_K \qquad &&\forall \widetilde  q_h \in \P_1(K), \\
(\widetilde p_h(t), q_h^0)_K
  &= (p_h(t), q_h^0)_K \qquad &&\forall q_h^0 \in \P_0(K).
\end{alignat*}
\end{problem}

Note that the improved pressure $\widetilde p_h(t)$ is piecewise polynomial but discontinuous in general. 
It can thus be computed separately on every element $K \in \Th$ by solving a small linear system of equations. 
For the subsequent analysis, we additionally require that 
\begin{itemize}
  \item[(A3)] the mesh $\Th$ is quasi-uniform, i.e., $\gamma h \le h_K \le h$ for all $K \in \Th$ for some $\gamma>0$. 
\end{itemize}
This assumption is quite natural when considering explicit time integration later on. 
It additionally allows us to make use of inverse inequalities, e.g., 
\begin{align} \label{eq:inverse}
\|\div  \, v_h\|_{L^2(\Omega)} \le c h^{-1} \|v_h\|_{L^2(\Omega)} \qquad \forall v_h \in V_h.
\end{align}
As a preparatory step, we next derive an estimate for the error in the time derivative. 
\begin{lemma}[Error estimate for the time-derivative]\label{lemma:dtu} $ $\\
Let (A1)--(A3) hold. Then
\begin{align*}
\|\dt u-\dt u_h\|_{L^\infty(0,T;L^2(\Omega))} \le h C_1(u)
\end{align*}
with constant $C_1(u)$ of the same form as defined in Theorem~\ref{theorem:discreteerror}.
\end{lemma}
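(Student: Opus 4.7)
The plan is to employ the splitting
\[
\dt u - \dt u_h = (\dt u - \dt u_h^*) + (\dt u_h^* - \dt u_h)
\]
and to handle the two contributions separately. The approximation part will be treated by a pointwise inexact elliptic projection argument, while the discrete part will be bounded by combining the $O(h^2)$ estimates on the pressure from Theorem~\ref{theorem:discreteerror} with an $O(h^{-1})$ factor from the inverse inequality \eqref{eq:inverse}, which is available thanks to the quasi-uniformity assumption (A3).

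For the approximation error, I first observe that formal time-differentiation of the third and fourth equations of Problem~\ref{problem:auxfunctions} shows that at every $t \in [0,T]$ the pair $(\dt u_h^*(t), p_h^*(t))$ solves exactly the inexact elliptic projection problem (Problem~\ref{problem:inexactelliptic}) with data $(w,r) = (\dt u(t), p(t))$. Indeed, using $\dt u = -\nabla p$ together with integration by parts and the boundary condition $p|_{\partial\Omega}=0$, one gets $(\dt u, v_h)_\Omega - (p, \div v_h)_\Omega = 0$ for all $v_h \in V_h$, matching the vanishing right-hand side of the time-differentiated aux equation. Lemma~\ref{lemma:inexact} then delivers the pointwise bound $\|\dt u(t) - \dt u_h^*(t)\|_{L^2(\Omega)} \le C h \|\dt u(t)\|_{H^1(\Omega)}$. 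Taking the supremum over $t$ and invoking the one-dimensional Sobolev embedding $W^{2,1}(0,T;H^1) \hookrightarrow L^\infty(0,T;H^1)$ produces $\|\dt u - \dt u_h^*\|_{L^\infty(0,T;L^2)} \le h\, C_1(u)$.

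For the discrete error, subtracting \eqref{eq:sysvd2} from the third equation of Problem~\ref{problem:auxfunctions} yields
\[
(\dt u_h - \dt u_h^*, v_h)_{h,\Omega} = (p_h - p_h^*, \div v_h)_\Omega \qquad \forall v_h \in V_h.
\]
Since $\div v_h \in Q_h$, the right-hand side is unchanged upon inserting $\pm \pi_h^0 p$ and is therefore equal to $(p_h - \pi_h^0 p, \div v_h)_\Omega + (\pi_h^0 p - p_h^*, \div v_h)_\Omega$. Testing with $v_h = \dt u_h - \dt u_h^*$ and applying Cauchy--Schwarz, the inverse inequality \eqref{eq:inverse}, and the norm equivalence \eqref{eq:norm}, I obtain
\[
\|\dt u_h - \dt u_h^*\|_{L^2(\Omega)} \le C h^{-1}\bigl(\|\pi_h^0 p - p_h\|_{L^2(\Omega)} + \|\pi_h^0 p - p_h^*\|_{L^2(\Omega)}\bigr).
\]
Both norms on the right are, uniformly in $t \in [0,T]$, of size $h^2\, C_1(u)$: the first by the third estimate of Theorem~\ref{theorem:discreteerror}, and the second by Lemma~\ref{lemma:approximations} (with $k=1$, $p=1$) combined with the embedding $W^{1,1}(0,T;L^2) \hookrightarrow L^\infty(0,T;L^2)$. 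Together with the approximation estimate, this completes the argument.

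The only delicate point is the inverse-inequality step: this is where the quasi-uniformity assumption (A3) genuinely enters and is the mechanism that converts the $O(h^2)$ estimates available for the pressure into an $O(h)$ estimate for the velocity time-derivative. No Grönwall-type or new energy argument is required; the discrete error bound follows algebraically from the first equations of the semi-discrete and auxiliary problems combined with the pressure estimates already in hand.
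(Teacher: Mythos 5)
Your argument is correct and follows essentially the same route as the paper: the same splitting into $\dt u-\dt u_h^*$ and $\dt u_h^*-\dt u_h$, the inexact elliptic projection estimate for the first part, and the discrete error equation tested with $\dt u_h-\dt u_h^*$ combined with the inverse inequality and the second-order pressure bounds of Theorem~\ref{theorem:discreteerror} for the second part. The only cosmetic difference is that you insert $\pm\pi_h^0 p$ and invoke two pressure estimates where the paper uses the single bound $\|p_h^*-p_h\|_{L^\infty(0,T;L^2(\Omega))}\le h^2 C_1(u)$ directly.
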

\begin{proof} 
We again start with splitting the error by
\begin{align*}
\|\dt u(t) - \dt u_h(t)\|_{L^2(\Omega)}
&\le \|\dt u(t) - \dt u_h^*(t)\|_{L^2(\Omega)} + \|\dt u_h^*(t) - \dt u_h(t)\|_{L^2(\Omega)} \\
&= (i)+(ii).  
\end{align*}
The first term is covered by Lemma~\ref{lemma:approximations}, which yields
$(i)\leq C h \|\dt u(t)\|_{H^1(\Omega)}$.
With the aid of the norm equivalence estimate \eqref{eq:norm}, the second term can be bounded by
\begin{align*}
|(ii)|^2
&\le \|\dt u_h^*(t) - \dt u_h(t)\|_{h,\Omega}^2
= (p_h^*(t) - p_h(t),\div\, \dt u_h^*(t) - \div\,\dt u_h(t))_\Omega \\
&\le c h^{-1} \|p_h^*(t) - p_h(t)\|_{L^2(\Omega)} \|\dt u_h^*(t) - \dt u_h(t)\|_{L^2(\Omega)}.
\end{align*}
In the last step, we used the inverse inequality \eqref{eq:inverse} to eliminate the divergence operator.
Together with the estimates of Theorem~\ref{theorem:discreteerror}, we thus obtain
\begin{align*}
(ii) \le  c h^{-1} \|p_h^*(t) - p_h(t)\|_{L^2(\Omega)}
\le h  C_1(u),
\end{align*}
which holds for all $t \in [0,T]$. A combination of the two estimates proves the assertion. 
\end{proof}
Let us note that the estimate for the time derivative could be shown also without assumptions (A2)--(A3); 
see \cite[Lemma 3.8]{EggerRadu16}. Inverse inequalities will however be used also later on in our analysis
and we therefore used them already in the previous proof.
With the help of Lemma~\ref{lemma:dtu}, we can now prove the following estimate.
\begin{theorem}[Error estimate for the improved pressure]\label{theorem:semidiscrete-pp} $ $\\
Let (A1)--(A3) hold. Then
\begin{align*}
\|p-\widetilde p_h\|_{L^\infty(0,T;L^2(\Omega))} \le h^{2} C_1(u),
\end{align*}
with the same constant $C_1(u)$ as defined in Theorem~\ref{theorem:discreteerror}.
\end{theorem}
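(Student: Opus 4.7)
The plan is to adapt the Stenberg-type elementwise post-processing argument, analogous to \cite{EggerRadu16}, to the present mass-lumped setting. I would introduce an auxiliary piecewise linear function $\widetilde p_h^*(t) \in \P_1(\Th)$ defined element by element by the same rules as $\widetilde p_h$, but with the exact $\dt u$ and $p$ in place of their discrete counterparts:
\begin{align*}
(\nabla \widetilde p_h^*(t), \nabla \widetilde q_h)_K &= -(\dt u(t), \nabla \widetilde q_h)_K \qquad \forall \widetilde q_h \in \P_1(K),\\
(\widetilde p_h^*(t), q_h^0)_K &= (p(t), q_h^0)_K \qquad \forall q_h^0 \in \P_0(K).
\end{align*}
Because $\nabla p = -\dt u$ by \eqref{eq:sysm1}, the polynomial $\widetilde p_h^*|_K$ is the unique linear function on $K$ sharing the mean of $p$ and whose (constant) gradient equals the cellwise $L^2$-projection of $\nabla p$. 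A standard Bramble--Hilbert/scaling argument under (A1) then gives $\|p - \widetilde p_h^*\|_{L^2(K)} \le C h_K^2 |p|_{H^2(K)}$ and, using $|p|_{H^2(\Omega)} \le \|\dt u\|_{H^1(\Omega)}$ again by \eqref{eq:sysm1}, the global bound $\|p - \widetilde p_h^*\|_{L^\infty(0,T;L^2(\Omega))} \le h^2 C_1(u)$.

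It then remains to bound $\widetilde p_h^* - \widetilde p_h$. By construction, on each $K$ this difference is a linear polynomial whose cellwise mean equals $(\pi_h^0 p - p_h)|_K$ and whose constant gradient equals $\pi_K^0(\dt u_h - \dt u)$, where $\pi_K^0$ denotes the $L^2$-projection onto constants on $K$. Splitting into mean and zero-mean components and applying a Poincaré inequality to the latter (with constant independent of $K$ by (A1)) yields
\begin{align*}
\|\widetilde p_h^* - \widetilde p_h\|_{L^2(K)} \le \|\pi_h^0 p - p_h\|_{L^2(K)} + C h_K \|\dt u - \dt u_h\|_{L^2(K)}.
\end{align*}
Summation over elements gives $\|\widetilde p_h^* - \widetilde p_h\|_{L^2(\Omega)} \le \|\pi_h^0 p - p_h\|_{L^2(\Omega)} + C h \|\dt u - \dt u_h\|_{L^2(\Omega)}$. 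Theorem~\ref{theorem:discreteerror} bounds the first term by $h^2 C_1(u)$, while Lemma~\ref{lemma:dtu} yields $C h \cdot h\, C_1(u) = h^2 C_1(u)$ for the second. Combining with the approximation estimate above and taking the supremum over $t \in [0,T]$ proves the claim.

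The main difficulty is not in the post-processing step itself, which reduces to purely local, algebraic manipulations on each element, but in the fact that the whole argument \emph{must} rely on the super-convergence estimate $\|\pi_h^0 p - p_h\|_{L^2} \le h^2 C_1(u)$ from Theorem~\ref{theorem:discreteerror}, which in turn requires the convexity assumption (A2), together with the $O(h)$ estimate for $\dt u - \dt u_h$ from Lemma~\ref{lemma:dtu}, which implicitly uses the inverse inequality and hence the quasi-uniformity (A3). Were only first-order convergence of $\pi_h^0 p - p_h$ available, the construction above would deliver only $O(h)$ accuracy for $\widetilde p_h$, so the whole gain of the post-processing rests squarely on the duality-based super-convergence obtained earlier.
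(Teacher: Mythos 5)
Your proof is correct and follows essentially the same route as the paper, which simply invokes the Stenberg-type argument of \cite{EggerRadu16} verbatim, with Lemma~\ref{lemma:dtu} supplying the bound on $\dt u - \dt u_h$; your write-up fills in exactly the details that reference contains (comparison with an ideal elementwise post-processed function, orthogonal splitting into the mean part controlled by the super-convergence $\|\pi_h^0 p - p_h\| \le h^2 C_1(u)$ and the zero-mean part controlled by $h\|\dt u - \dt u_h\|$). Your closing remark correctly identifies the two load-bearing ingredients and why (A2) and (A3) enter.
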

\begin{proof}
Making use of Lemma~\ref{lemma:dtu} to estimate the error in the time derivative of the velocity,
the proof of the corresponding result in \cite{EggerRadu16} can be applied verbatim. 
\end{proof}

\subsection{Post-processing for the velocity}

We now propose a post-processing strategy for the velocity. 
Due to the presence of the mass-lumping, its analysis is more involved than that for the pressure.
The improved approximation $\widetilde u_h$ is here defined as follows. 
\begin{problem}[Post-processing strategy for the velocity] $ $\\
For every $0 \le t \le T$, find $\widetilde u_h(t)\in V_h$ and $\widetilde r_h(t)\in Q_h$ such that
\begin{alignat*}{2}
(\widetilde u_h(t), v_h)_\Omega - (\widetilde r_h(t), \div\,v_h)_\Omega &= (u_h(t), v_h)_{h,\Omega} \qquad &&\forall v_h \in V_h, \\
(\div\, \widetilde u_h(t), q_h)_\Omega &= (\div\,u_h(t), q_h)_\Omega                     \qquad &&\forall q_h \in Q_h. 
\end{alignat*}
\end{problem}
The post-processing procedure for $u_h$ is local in time but requires the solution of a global system in space. 
By appropriate preconditioning, the improved approximation can, in principle, still be computed in optimal complexity. 
The remainder of this section is devoted to the proof of the following result. 
\begin{theorem}[Error estimate for the improved velocity] \label{theorem:postu} $ $\\
Let (A1)--(A3) hold. Then
\begin{align*}
\|u - \widetilde u_h\|_{L^\infty(0,T;L^2(\Omega))}
\le h^2 \big( C_1(u) + C_2(u)\big) . 
\end{align*}
with constant $C_1(u)$ as in Theorem~\ref{theorem:discreteerror} and $C_2(u)=C \|u\|_{W^{1,1}(0,T;H^{2}(\Omega))}$.
\end{theorem}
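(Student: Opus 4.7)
The plan is to split the error via $u - \widetilde u_h = (u - \widetilde u_h^*) + (\widetilde u_h^* - \widetilde u_h)$, where $\widetilde u_h^* \in V_h$, $\widetilde r_h^* \in Q_h$ are defined by the very same post-processing applied to the auxiliary function $u_h^*$ instead of $u_h$:
\begin{align*}
(\widetilde u_h^*, v_h)_\Omega - (\widetilde r_h^*, \div v_h)_\Omega &= (u_h^*, v_h)_{h,\Omega} \qquad \forall v_h \in V_h, \\
(\div \widetilde u_h^*, q_h)_\Omega &= (\div u_h^*, q_h)_\Omega \qquad \forall q_h \in Q_h.
\end{align*}

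For the approximation term $u - \widetilde u_h^*$, I would first integrate the dynamic equation of Problem~\ref{problem:auxfunctions} in time and combine it with the initial condition to obtain the representation
\[
(u_h^*(t), v_h)_{h,\Omega} = (u(t), v_h)_\Omega + (\Phi(t), \div v_h)_\Omega, \qquad \Phi(t) := r_h^*(0) + \int_0^t (p_h^* - p)(s)\,ds.
\]
Since $\div v_h \in Q_h$, one may replace $\Phi$ by $\pi_h^0 \Phi \in Q_h$, and plugging this into the equation defining $\widetilde u_h^*$ reveals that $(\widetilde u_h^*, \widetilde r_h^* - \pi_h^0 \Phi)$ is exactly the standard exact mixed $L^2$-projection of $u$ onto $V_h \times Q_h$. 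Classical mixed finite element theory for the $\BDM_1$--$\P_0$ pair then supplies the best-approximation bound $\|u(t) - \widetilde u_h^*(t)\|_{L^2(\Omega)} \le C h^2 \|u(t)\|_{H^2(\Omega)}$, and the embedding $W^{1,1}(0,T) \hookrightarrow L^\infty(0,T)$ yields the required $h^2 C_2(u)$.

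For the discrete part $\widetilde u_h^* - \widetilde u_h$, subtracting the two post-processing systems produces a Brezzi saddle-point problem for $(\widetilde u_h - \widetilde u_h^*, \widetilde r_h - \widetilde r_h^*)$ with right-hand sides $(u_h - u_h^*, \cdot)_{h,\Omega}$ and $(\div(u_h - u_h^*), \cdot)_\Omega$. Invoking the inf-sup stability of $\BDM_1$--$\P_0$ in the $H(\div) \times L^2$ norms, together with the norm equivalence \eqref{eq:norm}, then delivers
\[
\|\widetilde u_h - \widetilde u_h^*\|_{L^2(\Omega)} \le C \bigl( \|u_h - u_h^*\|_{L^2(\Omega)} + \|\div(u_h - u_h^*)\|_{L^2(\Omega)} \bigr).
\]
Theorem~\ref{theorem:discreteerror} already controls the first term by $h^2 C_1(u)$. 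For the divergence, the semi-discrete equation~\eqref{eq:sysvd3} forces $\div u_h = -\dt p_h$ in $Q_h$, while the construction in Problem~\ref{problem:auxfunctions} (together with Lemma~\ref{lemma:approximations}) gives $\div u_h^* = -\dt \pi_h^0 p$, so $\div(u_h - u_h^*) = \dt(\pi_h^0 p - p_h)$.

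The main obstacle is therefore to bound $\|\dt(\pi_h^0 p - p_h)\|_{L^\infty(0,T;L^2)}$ at order $h^2$. I would split it as $\dt(\pi_h^0 p - p_h^*) + \dt(p_h^* - p_h)$: the first summand is controlled directly by Lemma~\ref{lemma:approximations}, while for the second I would formally differentiate once in time the discrete error equations used in the proof of Lemma~\ref{lemma:est}, read off the initial values of $\dt(u_h - u_h^*)$ and $\dt(p_h - p_h^*)$ by evaluating \eqref{eq:sysvd2}--\eqref{eq:sysvd3} and Problem~\ref{problem:auxfunctions} at $t=0$, and rerun the same energy estimate on the differentiated system; the time-autonomous structure of the scheme makes this step mechanical but notationally heavy, and it is precisely here that the higher time regularity built into $C_1(u)$ is consumed. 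Combining the two contributions by the triangle inequality concludes the proof.
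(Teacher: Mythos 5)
Your auxiliary function $\widetilde u_h^{\,*}$ coincides (up to the Lagrange multiplier) with the one from Problem~\ref{problem:auxfunctions2}, and your treatment of the approximation term $u-\widetilde u_h^{\,*}$ agrees with Lemma~\ref{lemma:approximations2}. The gap is in the discrete part. Your Brezzi stability bound
\[
\|\widetilde u_h-\widetilde u_h^{\,*}\|_{L^2(\Omega)}\le C\bigl(\|u_h-u_h^*\|_{L^2(\Omega)}+\|\div\,(u_h-u_h^*)\|_{L^2(\Omega)}\bigr)
\]
forces you to control $\|\div\,(u_h-u_h^*)\|_{L^2(\Omega)}=\|\dt(\pi_h^0p-p_h)\|_{L^2(\Omega)}$ at order $h^2$, and the differentiated energy argument you propose cannot deliver this. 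After differentiating the discrete error system in time, the relevant initial datum is $\dt(u_h-u_h^*)(0)$, which is determined by $(\dt(u_h-u_h^*)(0),v_h)_{h,\Omega}=(\pi_h^0p(0)-p_h^*(0),\div\,v_h)_\Omega$; by the inverse inequality \eqref{eq:inverse} this functional is only of size $h^{-1}\cdot h^2=h$, and since the energy estimate couples the two error components, $\|\dt(p_h-p_h^*)(t)\|_{L^2(\Omega)}$ comes out as $O(h)$ for $t>0$, not $O(h^2)$. (Even setting this aside, the residual $\|\pi_h^0\dt p-\dt p_h^*\|_{W^{1,1}(0,T;L^2(\Omega))}$ appearing in the differentiated estimate consumes one more time derivative than is contained in $C_1(u)$.) There is no reason to expect $\|\div\,(u_h-u_h^*)\|_{L^2(\Omega)}$ to be better than $O(h)$, so the route through the $H(\div)$-norm stability is intrinsically too lossy.

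The paper avoids the divergence term altogether. One first establishes the orthogonality $(\widetilde u_h^{\,*}(t)-\widetilde u_h(t),v_h)_\Omega=0$ for all $v_h\in V_h$ with $\div\,v_h=0$, by writing this quantity as its value at $t=0$ plus the time integral of its derivative and checking that each contribution is a functional of $\div\,v_h$ alone; this step uses the specific construction of $u_h^*(0)$ and $\widetilde u_h^{\,*}(0)$. Since $\div\,\widetilde u_h^{\,*}=\pi_h^0\div\,u=\div\,u_h^*$ and $\div\,\widetilde u_h=\div\,u_h$, the function $v_h=\widetilde u_h^{\,*}-\widetilde u_h+u_h-u_h^*$ is divergence free, and testing with it yields
\[
\|\widetilde u_h^{\,*}-\widetilde u_h\|^2_{L^2(\Omega)}=(\widetilde u_h^{\,*}-\widetilde u_h,u_h^*-u_h)_\Omega\le\|\widetilde u_h^{\,*}-\widetilde u_h\|_{L^2(\Omega)}\,\|u_h^*-u_h\|_{L^2(\Omega)},
\]
hence $\|\widetilde u_h^{\,*}-\widetilde u_h\|_{L^2(\Omega)}\le\|u_h^*-u_h\|_{L^2(\Omega)}\le h^2C_1(u)$ by Theorem~\ref{theorem:discreteerror}, with no bound on the divergence of the discrete error required. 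This orthogonality-plus-divergence-matching trick is the idea your argument is missing.
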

To accomplish the proof of this theorem, we will require some preparatory results. 
In particular, we will make use of another auxiliary approximation.
\begin{problem}[Second choice for auxiliary functions]$ $\label{problem:auxfunctions2}\\
Find $\widetilde u_h^{\,*} \in C^1(0,T;V_h)$, $\widetilde p_h^{\,*} \in C(0,T;Q_h)$, and $\widetilde r_h^{\,*}(0) \in Q_h$ satisfying
\begin{alignat*}{2}
(\widetilde u_h^{\,*}(0),v_h)_\Omega - (\widetilde r_h^{\,*}(0), \div\,v_h)_\Omega &= (u(0),v_h)_\Omega, \qquad &&\forall v_h \in V_h, \\
(\div \, \widetilde u_h^{\,*}(0),q_h)_\Omega &= (\div \, u(0), q_h)_\Omega, \qquad &&\forall q_h \in Q_h,
\end{alignat*}
and such that for all $0\le t\le T$ and for all $v_h \in V_h$ and $q_h \in Q_h$ there holds
\begin{alignat*}{2}
(\dt\widetilde u_h^{\,*}(t),v_h)_\Omega - (\widetilde p_h^{\,*}(t), \div\,v_h)_\Omega &= 0,\\
(\div \, \dt \widetilde u_h^{\,*}(t),q_h)_\Omega &= (\div \,\dt u(t), q_h)_\Omega.
\end{alignat*}
\end{problem}
The functions $(\widetilde u_h^{\,*},\widetilde p_h^{\,*})$ are defined similarly as  
$(u_h^{\,*},p_h^{\,*})$, but using the standard scalar product instead of the one with mass-lumping for the first term in the first line.
By well-known results for mixed finite element methods, 
one obtains the following assertions. 
\begin{lemma}[Approximation error estimates] \label{lemma:approximations2} $ $\\
Let (A1) hold and let $(\widetilde u_h^{\,*},\widetilde p_h^{\,*})$ be defined by Problem~\ref{problem:auxfunctions2}. Then
\begin{align*}
(\div\,u(t)-\div\,\widetilde u_h^{\,*}(t),q_h)_\Omega &= 0 \qquad\forall q_h\in Q_h.
\end{align*}
If $u$ and $p$ are sufficiently smooth, then 
\begin{align*}
\|u - \widetilde u_h^{\,*}\|_{L^\infty(0,T;L^2(\Omega))} &\le h^2 C_2(u)
\end{align*}
with constant $C_2(u)$ of the same form as defined in Theorem~\ref{theorem:postu}.
\end{lemma}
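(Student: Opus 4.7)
The approach is to prove the divergence identity and the $L^2$ estimate separately. The first follows by a direct integration argument, while the second will be obtained by identifying $\widetilde u_h^{\,*}(t)$, at every fixed time, with a stationary mixed projection of $u(t)$ whose approximation properties are furnished by standard BDM$_1$--P$_0$ theory.

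For the divergence identity, the initial condition in Problem~\ref{problem:auxfunctions2} gives $(\div \widetilde u_h^{\,*}(0),q_h)_\Omega = (\div u(0),q_h)_\Omega$ directly; integrating the second evolution equation $(\div \dt \widetilde u_h^{\,*}(t),q_h)_\Omega = (\div \dt u(t),q_h)_\Omega$ over $(0,t)$ and adding then yields the claim for all $t\in[0,T]$.

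For the $L^2$ estimate, at each fixed $t$ I would introduce $(\hat u_h(t),\hat r_h(t))\in V_h \times Q_h$ as the unique solution of
\begin{align*}
(\hat u_h(t),v_h)_\Omega - (\hat r_h(t),\div\,v_h)_\Omega &= (u(t),v_h)_\Omega, \quad \forall v_h \in V_h, \\
(\div\,\hat u_h(t),q_h)_\Omega &= (\div\,u(t),q_h)_\Omega, \quad \forall q_h \in Q_h.
\end{align*}
Existence and uniqueness follow from the inf-sup stability of the BDM$_1$--P$_0$ pair. Since this saddle-point system encodes the $L^2$-best approximation of $u(t)$ in the affine set $\{w_h \in V_h : \div\, w_h = \pi_h^0 \div u(t)\}$, comparing with the canonical BDM$_1$ interpolant $\rho_h u(t)$, which lies in the same set, gives the optimal bound $\|u(t) - \hat u_h(t)\|_{L^2(\Omega)} \le \|u(t) - \rho_h u(t)\|_{L^2(\Omega)} \le C h^2 \|u(t)\|_{H^2(\Omega)}$.

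The crucial step is to show $\widetilde u_h^{\,*}(t) = \hat u_h(t)$ for all $t$. The two coincide at $t=0$ by construction. Subtracting the time-differentiated stationary system for $\hat u_h$ from the evolution system for $\widetilde u_h^{\,*}$, and invoking the identity $(\dt u(t), v_h)_\Omega = (p(t), \div\, v_h)_\Omega$ for $v_h \in V_h$ --- which follows from \eqref{eq:sysm1} via integration by parts using $p|_{\partial\Omega}=0$ --- together with $(p,\div\, v_h)_\Omega = (\pi_h^0 p,\div\, v_h)_\Omega$ (since $\div v_h \in Q_h$), one finds that the pair $\dt(\widetilde u_h^{\,*} - \hat u_h)$ and $\widetilde p_h^{\,*} - \dt \hat r_h - \pi_h^0 p$ solves a homogeneous BDM$_1$--P$_0$ saddle-point system. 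Inf-sup stability then forces $\dt(\widetilde u_h^{\,*} - \hat u_h) \equiv 0$, so the initial agreement propagates to all $t \in [0,T]$. Taking the supremum in time and using the embedding $W^{1,1}(0,T;H^2(\Omega)) \hookrightarrow L^\infty(0,T;H^2(\Omega))$ yields the stated constant $C_2(u)$. The main obstacle is this identification step, which relies on the absence of a $p$-term in the first defining equation of $\widetilde u_h^{\,*}$ being compensated exactly by the PDE identity above.
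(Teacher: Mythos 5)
Your proof is correct, and both parts reach the paper's conclusion, but your route for the $L^2$ estimate is organized differently from the paper's. The paper treats the two defining systems of Problem~\ref{problem:auxfunctions2} separately: the initial-value system is the standard mixed elliptic projection of $u(0)$ and the evolution system (rewritten via \eqref{eq:var1}) is the standard mixed elliptic projection of $(\dt u(t),p(t))$, so \cite[Prop.~7.1.2]{BoffiBrezziFortin13} gives $\|u(0)-\widetilde u_h^{\,*}(0)\|\le Ch^2\|u(0)\|_{H^2}$ and $\|\dt u(t)-\dt\widetilde u_h^{\,*}(t)\|\le Ch^2\|\dt u(t)\|_{H^2}$, after which the claim follows by writing $u(t)-\widetilde u_h^{\,*}(t)$ as the initial error plus the time integral of the derivative error. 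You instead prove the stronger structural fact that $\widetilde u_h^{\,*}(t)$ \emph{equals} the stationary mixed projection $\hat u_h(t)$ of $u(t)$ for every $t$ — the key point being that the PDE identity $(\dt u,v_h)_\Omega=(p,\div\,v_h)_\Omega=(\pi_h^0 p,\div\,v_h)_\Omega$ makes the time derivative of the stationary projection satisfy exactly the evolution system, so the difference solves a homogeneous inf-sup-stable saddle-point problem and vanishes — and then you apply the elliptic estimate pointwise in time via the best-approximation characterization against the BDM$_1$ interpolant $\rho_h u(t)$ (using the commuting-diagram property $\div\,\rho_h u=\pi_h^0\div\,u$). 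Your identification argument is a clean and slightly sharper way to see why the construction works; the paper's integration argument is shorter and needs no uniqueness/differentiation-in-time discussion, but both ultimately rest on the same standard BDM$_1$--P$_0$ approximation theory and yield the same constant $C_2(u)=C\|u\|_{W^{1,1}(0,T;H^2(\Omega))}$ after the embedding into $L^\infty(0,T;H^2(\Omega))$.
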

\begin{proof}
From standard arguments, see e.g. \cite[Proposition~7.1.2]{BoffiBrezziFortin13}, we know that 
\begin{align*}
\|u(0) - \widetilde u_h^{\,*}(0)\|_{L^2(\Omega)} 
& \le h^2 \|u(0)\|_{H^2(\Omega)}, \\
\|\dt u(t) -  \dt \widetilde u_h^{\,*}(t)\|_{L^2(\Omega)} & \le h^2 \|\dt u(t)\|_{H^2(\Omega)}.
\end{align*}
The results can then be obtained by integration and elementary computations.
\end{proof}

\subsection{Proof of Theorem~\ref{theorem:postu}}

We are now in the position to present the proof of the error estimate for the improved velocity. 
We start by decomposing the error via
\begin{align*}
\|u(t) - \widetilde u_h(t)\|_{L^2(\Omega)} 
\le  \|u(t)-\widetilde u_h^{\,*}(t)\|_{L^2(\Omega)} + \|\widetilde u_h^{\,*}(t)-\widetilde u_h(t)\|_{L^2(\Omega)} 
= (i) + (ii).
\end{align*}
Lemma~\ref{lemma:approximations2} allows us to bound the first term by
\begin{align*}
(i) = \|u(t)-\widetilde u_h^{\,*}(t)\|_{L^2(\Omega)} \le  h^2 C_2(u).
\end{align*}
In order to deal with the second term $(ii)$ in the above estimate, 
first observe that  
\begin{align*}
(\widetilde u_h^{\,*}(t) - \widetilde u_h(t),v_h)_\Omega 
&= (\widetilde u_h^{\,*}(0) - \widetilde u_h(0),v_h)_\Omega
 + \int_0^t (\dt \widetilde u_h^{\,*}(s) - \dt \widetilde u_h(s), v_h)_\Omega \,ds 
\\ &
= (iii) + (iv).
\end{align*}
Using the definitions of $u_h^*(0)$ and $\widetilde u_h^{\;*}(0)$ and the choice \eqref{eq:initial} for the initial conditions, the first term on the right hand side can be expanded in the following way 
\begin{align*}
(iii)
&= (u(0),v_h)_\Omega -(u_h(0),v_h)_{h,\Omega} +(\widetilde r_h^{\,*}(0)-\widetilde r_h(0),\div \, v_h)_\Omega\\
&=(u(0),v_h)_\Omega -(u_h^*(0),v_h)_{h,\Omega} +(\widetilde r_h^{\,*}(0)-\widetilde r_h(0),\div \, v_h)_\Omega\\
&=(\widetilde r_h^{\,*}(0)-\widetilde r_h(0)-r_h^*(0),\div \, v_h)_\Omega.
\end{align*}
The term (iii) thus obviously vanishes, if $\div\,v_h=0$. 
We next turn to the term $(iv)$ and recall that by definition of $\widetilde u_h$ and $\widetilde u_h^{\,*}$, we have
\begin{align*}
(\dt \widetilde u_h^{\,*}(s)-\dt \widetilde u_h(s),v_h)_\Omega &= (\widetilde p_h^{\,*}(s), \div \, v_h)_\Omega
 -(\dt u_h(s),v_h)_{h,\Omega} - (\dt\widetilde r_h(s),\div \,v_h)_\Omega \\
&=(\widetilde p_h^{\,*}(s) - p_h(s) - \dt \widetilde r_h(s), \div \, v_h)_\Omega
\end{align*}
This allows us to express the term $(iv)$ as
\begin{align*}
	(iv)=\int_0^t (\widetilde p_h^{\,*}(s) - p_h(s) - \dt \widetilde r_h(s), \div \, v_h)_\Omega\,ds .
\end{align*}
Again, this part vanishes if $\div \, v_h = 0$. 
In summary, we thus have shown that 
\begin{align} \label{eq:est2a}
(\widetilde u_h^{\,*}(t) - \widetilde u_h(t),v_h)_\Omega = 0 \qquad \text{ if } \div\, v_h = 0.
\end{align}
We can now expand the second term in the first estimate of the proof by 
\begin{align*}
|(ii)|^2 
&= \|\widetilde u_h^{\,*}(t) - \widetilde u_h(t)\|^2_{L^2(\Omega)} \\
&= (\widetilde u_h^{\,*}(t) - \widetilde u_h(t), \widetilde u_h^{\,*}(t) - \widetilde u_h(t) + u_h(t) - u_h^*(t))_\Omega \\
&\qquad \qquad  + (\widetilde u_h^{\,*}(t) - \widetilde u_h(t), u_h^*(t) - u_h(t))_\Omega = (v) + (vi).
\end{align*}
From the particular construction of the auxiliary functions $u_h^*$ and $\widetilde u_h^{\;*}$, and of the improved approximation $\widetilde u_h$, one can deduce that 
\begin{align*}
\div \,\widetilde u_h^{\,*}(t) = \pi_h^0 \div\, u(t) = \div\, u_h^*(t)\quad\text{ and }\quad\div\, \widetilde u_h(t) = \div\, u_h(t),
\end{align*}
for all $t>0$. Therefore, the test function in term $(v)$ has zero divergence and according to equations \eqref{eq:est2a}, we 
obtain $(v)=0$. 
The term $(vi)$ can be further estimated by the Cauchy-Schwarz inequality and Theorem~\ref{theorem:discreteerror}, which finally yields
\begin{align*}
(ii) \le \|u_h(t) - u_h^*(t)\|_{L^2(\Omega)} \le h^2 C_1(u).
\end{align*}
This concludes the proof of our second main result concerning the semi-discretization.
\qed

\section{Time discretization} \label{sec:time}

Our main motivation for the investigation of mass-lumping for the mixed finite element method was to enable an efficient numerical integration in time by explicit Runge-Kutta or multistep methods.
We here consider the leapfrog scheme, which is explicit and formally second order accurate and which enjoys many further interesting properties \cite{HairerLubichWanner03,Joly03}. 

\subsection{The fully discrete scheme}

Choose $N>1$, set $\tau = T/N$, and define time steps 
$t^n=n\tau$ and $t^{n-1/2} = (n-1/2) \tau$ for all $0\le n\le N$. 
We use the symbols
\begin{align*}
\dtau q_h^{n+1/2} = \frac{q_h^{n+1} - q_h^{n}}{\tau}
\qquad \text{and} \qquad
\dtau v_h^{n} = \frac{v_h^{n+1/2}-v_h^{n-1/2}}{\tau}
\end{align*}
to denote the backward difference quotients for given sequences 
$\{q^n_h\}_{n\ge 0}$ and $\{v_h^{n-1/2}\}_{n \ge 0}$.
For the time-discretization of the semi-discrete problem \eqref{eq:sysvd1}--\eqref{eq:sysvd3},
we then consider 
\begin{problem}[Fully discrete problem]$ $ \label{problem:fulldiscrete}\\
Set $p_h^{0} = \pi_h^0 p(0)$ and define $u_h^{-1/2} \in V_h$ as solution of 
\begin{align}
(u_h^{-1/2},v_h)_{h,\Omega} = (u_h^*(0),v_h)_{h,\Omega} - \frac{\tau}{2}  (p_h^0,\div \, v_h)_\Omega \qquad \forall v_h \in V_h. \label{eq:sysvdf1}
\end{align}
Then for $0 \le n \le N-1$ find $(u_h^{n+1/2},p_h^{n+1}) \in V_h\times Q_h$, such that 
\begin{alignat}{2}
(\dtau u_h^{n},v_h)_{h,\Omega} - (p_h^{n}, \div \, v_h)_\Omega 
     &= 0 \qquad &&\forall v_h \in V_h, \label{eq:sysvdf2} \\
(\dtau p_h^{n+1/2},q_h)_\Omega + (\div \, u_h^{n+1/2},q_h)_\Omega 
     &= 0 \qquad &&\forall q_h \in Q_h. \label{eq:sysvdf3}
\end{alignat}
\end{problem}
Let us note that as a consequence of the norm equivalence estimate \eqref{eq:norm}, 
the fully discrete scheme can be seen to be well-defined.
The remainder of this section will be devoted to the error analysis of the proposed method.

\subsection*{Convention}
We denote by $(u,p)$ a sufficiently smooth solution of problem \eqref{eq:sysm1}--\eqref{eq:sysm4} and by $(u_h^{n-1/2},p_h^n)_{n \ge 0}$ the unique solution of Problem~\ref{problem:fulldiscrete}. 
In addition, we assume that the conditions (A1)--(A3) are valid and that 
the time step $\tau$ satisfies the CFL condition
\begin{itemize}
\item[(A4)] $0<\tau \le 1/2$ and $\tau \le h/c$ where $c$ is the constant of the inverse inequality \eqref{eq:inverse}. 
\end{itemize}
As a direct consequence of \eqref{eq:inverse} and assumption (A4), we obtain that
\begin{align} \label{eq:inverse2}
\|\div\,v_h\|_{L^2(\Omega)} \le \frac{1}{\tau}\|v_h\|_{h,\Omega} \qquad\forall\,v_h \in V_h,
\end{align}
which will be used in the proof of discrete energy estimates in the next section.

\subsection{Discrete error}

Let us start by introducing some additional notations.
For any sequence $\{v_h^{n-1/2}\}_{n \ge 0}$ and any (piecewise) continuous function 
$z$, we denote by
\begin{align*}
\widehat v_h^{\,n} = \frac{v_h^{n+1/2}+v_h^{n-1/2}}{2} 
\qquad \text{and} \qquad 
\widehat z(t) := \frac{1}{\tau} \int_{t-\tau/2}^{t+\tau/2} z(s) ds
\end{align*}
local averages around $t^n$ and $t$, respectively. 
Using the auxiliary functions $(u_h^*,p_h^*)$ introduced in Problem~\ref{problem:auxfunctions}, we can split the error into an approximation error and a discrete error component.
Similar to the semi-discrete level, the discrete error component can be characterized as the solution of a particular fully discrete system. 
\begin{lemma}[Discrete error equation]\label{lemma:fulldiscrete} $ $\\
For all $1 \le n \le N$, define 
\begin{align*}
a_h^{n-1/2} = u_h^{n-1/2} - u_h^*(t^{n-1/2})
\qquad \text{and} \qquad 
b_h^{n} = p_h^{n} - \widehat p_h^{\,*}(t^{n}).
\end{align*}
Furthermore, let $a_h^{-1/2} \in V_h$ and $b_h^{0}\in Q_h$ be the unique solution of  
\begin{alignat*}{2}
(a_h^{-1/2},v_h)_{h,\Omega} &= (a_h^{1/2},v_h)_{h,\Omega} - {\tau} (b_h^0, \div \, v_h)_\Omega \qquad &&\forall v_h \in V_h,\\
(b_h^{0},q_h)_\Omega &= (b_h^{1},q_h)_{h,\Omega} + {\tau} ( \div \,a_h^{1/2}, q_h)_\Omega \qquad &&\forall q_h \in Q_h.
\end{alignat*}
Then the sequence $\{(a_h^{n-1/2},b_h^{n})\}_{0 \le n \le N} \subset V_h\times Q_h$ satisfies
\begin{alignat}{2}
(\dtau a_h^{n},v_h)_{h,\Omega} - (b_h^{n}, \div \, v_h)_\Omega &= 0 \qquad &&\forall v_h \in V_h, \label{eq:id1}\\
(\dtau b_h^{n+1/2},q_h)_\Omega + (\div \, a_h^{n+1/2},q_h)_\Omega &= (g_h^{n+1/2}, q_h)_\Omega \qquad &&\forall q_h \in Q_h, \label{eq:id2}
\end{alignat}
for all $0 \le n \le N-1$ with right hand side $g_h^{1/2}=0$ and 
\begin{align}
g_h^{n+1/2} 
= \pi_h^0 \dt p(t^{n+1/2}) - \dtau \widehat p_h^{\,*}(t^{n+1/2}) \qquad \text{for all } 1 \le n \le N-1. \label{eq:id3}
\end{align}
\end{lemma}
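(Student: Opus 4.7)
The plan is to subtract from the fully discrete scheme \eqref{eq:sysvdf2}--\eqref{eq:sysvdf3} the identities satisfied by the auxiliary pair $(u_h^*,p_h^*)$ of Problem~\ref{problem:auxfunctions}, after integrating or averaging them in time over a symmetric interval of length $\tau$ around $t^n$ (for the velocity equation) or $t^{n+1/2}$ (for the pressure equation). The decisive feature is that the mass-lumped scalar product $(\cdot,\cdot)_{h,\Omega}$ appears on both sides of the subtraction for the momentum balance, so the quadrature perturbation disappears from the discrete error equation; this is precisely the reason why $u_h^*$ was constructed with the same modified inner product.

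For \eqref{eq:id1}, I would start from the identity $(\dt u_h^*(s),v_h)_{h,\Omega}=(p_h^*(s),\div\,v_h)_\Omega$ of Problem~\ref{problem:auxfunctions} and integrate over $[t^{n-1/2},t^{n+1/2}]$. After dividing by $\tau$, the left-hand side telescopes into $(\dtau u_h^*(t^n),v_h)_{h,\Omega}$, while the right-hand side yields $(\widehat p_h^{\,*}(t^n),\div\,v_h)_\Omega$ by definition of the local time average. Subtracting this from \eqref{eq:sysvdf2} and collecting terms according to the definitions of $a_h^{n-1/2}$ and $b_h^n$ produces \eqref{eq:id1} with vanishing right-hand side for every $1\le n\le N-1$.

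For \eqref{eq:id2}, the analogous calculation uses \eqref{eq:sysvdf3} together with the identity $(\div\,u_h^*(t),q_h)_\Omega=(\div\,u(t),q_h)_\Omega=-(\pi_h^0\dt p(t),q_h)_\Omega$, which combines Lemma~\ref{lemma:approximations} with the continuous equation \eqref{eq:sysm2}. Evaluating at $t=t^{n+1/2}$, together with the decomposition $\dtau b_h^{n+1/2}=\dtau p_h^{n+1/2}-\dtau \widehat p_h^{\,*}(t^{n+1/2})$, yields \eqref{eq:id2} with right-hand side of exactly the form \eqref{eq:id3} for $1\le n\le N-1$.

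The main obstacle, and the part that requires real care, is the initialization. The formula $b_h^n=p_h^n-\widehat p_h^{\,*}(t^n)$ makes sense only for $n\ge 1$, since the local average $\widehat p_h^{\,*}(t^0)$ would involve values of $p_h^*$ at negative times, and the telescoping argument for the momentum equation likewise breaks down at $n=0$. Accordingly, $a_h^{-1/2}$ and $b_h^0$ have to be prescribed by separate equations, and I plan to verify that the two small auxiliary systems stated in the lemma are exactly those for which \eqref{eq:id1}--\eqref{eq:id2} at $n=0$ hold with vanishing right-hand side $g_h^{1/2}=0$. Rewriting the defining equations as difference quotients makes this identification immediate, and the unique solvability of these initial systems follows from the norm equivalence \eqref{eq:norm} together with the properties of the $\BDM_1$--$\P_0$ pair.
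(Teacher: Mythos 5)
Your proposal follows essentially the same route as the paper's proof: for $n\ge 1$ you integrate the identity $(\dt u_h^*(s),v_h)_{h,\Omega}=(p_h^*(s),\div\,v_h)_\Omega$ over $[t^{n-1/2},t^{n+1/2}]$ so that the difference quotient and the average $\widehat p_h^{\,*}(t^n)$ appear exactly, subtract from the scheme (the mass-lumped inner product cancelling by construction of $u_h^*$), and for the second equation combine $(\div\,u_h^*(t^{n+1/2}),q_h)_\Omega=(\div\,u(t^{n+1/2}),q_h)_\Omega=-(\pi_h^0\dt p(t^{n+1/2}),q_h)_\Omega$ with the fully discrete pressure update to obtain the residual \eqref{eq:id3}; the $n=0$ case is, as you note, just the defining systems for $a_h^{-1/2}$ and $b_h^0$ rewritten as difference quotients. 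This matches the paper's argument, so the proposal is correct.
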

\begin{proof} 
The two identities for $n=0$ follow directly from the construction of $a_h^{-1/2}$ and $b_h^{\,0}$. 
Now let $n \ge 1$. Then by equation \eqref{eq:sysvdf1}, 
using the fundamental theorem of calculus, and the definition of $(u_h^*,p_h^*)$, one can see that
\begin{align*}
(\dtau a_h^n,v_h)_{h,\Omega} &- (b_h^n, \div \, v_h)_\Omega  \\
&= \frac{1}{\tau}(u_h^*(t^{n+1/2}) - u_h^*(t^{n-1/2}),v_h)_{h,\Omega} - 
\frac{1}{\tau} \int_{t^{n-1/2}}^{t^{n+1/2}} (p_h^*(s), \div \, v_h)_\Omega\, ds \\
&= \frac{1}{\tau} \int_{t^{n-1/2}}^{t^{n+1/2}} (\dt u_h^*(s), v_h)_{h,\Omega} - (p_h^*(s), \div \, v_h)_\Omega\, ds = 0.
\end{align*}
This already proves the first identity \eqref{eq:id1} for $n \ge 1$.
From the characterizations of the discrete and continuous solutions and the definition of the auxiliary functions, we get 
\begin{align*}
(\dtau b_h^{n+1/2},q_h)_\Omega &+ (\div \, a_h^{n+1/2}, q_h)_\Omega \\
&=(\dt p(t^{n+1/2})-\dtau \widehat p_h^{\,*}(t^{n+1/2}), q_h)_\Omega + (\div \, u(t^{n+1/2})-\div \, u_h^*(t^{n+1/2}), q_h)_\Omega \\
&=(\dt p(t^{n+1/2})-\dtau \widehat p_h^{\,*}(t^{n+1/2}), q_h)_\Omega.
\end{align*}
Note that the divergence terms here cancel out by the special construction of $u_h^*$; see Lemma~\ref{lemma:approximations}. 
This shows the second identity \eqref{eq:id2} of the Lemma for $n \ge 1$.
\end{proof}
As a next step, we now derive an energy estimate for the fully discrete scheme. 
\begin{lemma}[Discrete energy estimate] $ $\label{lemma:discenergyest}\\ 
Let $\{a_h^{n-1/2}\} \subset V_h$, $\{b_h^{n}\} \subset Q_h$, and  $\{g_h^{n+1/2}\} \subset Q_h$ be given sequences such that 
\begin{alignat}{2}
(\dtau a_h^{n},v_h)_{h,\Omega} - (b_h^{n}, \div \, v_h)_\Omega &= 0 \qquad &&\forall v_h \in V_h, \label{eq:dee1}\\
(\dtau b_h^{n+1/2},q_h)_\Omega + (\div \, a_h^{n+1/2},q_h)_\Omega &= (g_h^{n+1/2}, q_h)_\Omega \qquad &&\forall q_h \in Q_h. \label{eq:dee2}
\end{alignat}
Furthermore, assume that (A3)--(A4) hold. Then
\begin{align}
\|\widehat a_h^{\,n}\|_{L^2(\Omega)}^2 + \|b_h^n\|_{L^2(\Omega)}^2\le 
C_T \big( \|\widehat a_h^{\,0}\|_{L^2(\Omega)}^2 +\|b_h^0\|_{L^2(\Omega)}^2+
\sum_{k=0}^{n-1} \tau \|g_h^{k+1/2}\|_{L^2(\Omega)}^2 \big), \label{eq:discenergyest}
\end{align}
for all $0 \le n \le N-1$ with $\widehat a_h^{\,n} = \frac{1}{2}(a_h^{n+1/2}+a_h^{n-1/2})$ 
and $C_T$ depending on $T=N \tau$.
\end{lemma}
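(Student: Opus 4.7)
The plan is to apply the standard discrete energy method tailored to the staggered leapfrog structure, in which the test functions are centered averages. First, I would test the momentum equation \eqref{eq:dee1} at step $n$ with $v_h = a_h^{n+1/2}+a_h^{n-1/2}$ and the continuity equation \eqref{eq:dee2} at step $n+\tfrac{1}{2}$ with $q_h = b_h^{n+1}+b_h^n$. The four cross terms produced by the discrete divergence pairings combine so that two of them cancel and the remaining two assemble into the skew quantity $F_n := (b_h^n,\div a_h^{n-1/2})_\Omega$. After adding the two identities one arrives at the telescoping identity
\begin{align*}
\mathcal E^{n+1}-\mathcal E^n = 2\tau(g_h^{n+1/2},\widehat b_h^{\,n+1/2})_\Omega,
\qquad
\mathcal E^n := \|a_h^{n-1/2}\|_{h,\Omega}^2 + \|b_h^n\|_{L^2(\Omega)}^2 + \tau F_n.
\end{align*}

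The second ingredient is to show that the modified energy $\mathcal E^n$ is equivalent to the natural energy $\|a_h^{n-1/2}\|_{h,\Omega}^2 + \|b_h^n\|_{L^2(\Omega)}^2$. This is the point where the CFL assumption (A4) is exploited through the inverse inequality \eqref{eq:inverse2}: from $\tau\|\div a_h^{n-1/2}\|_{L^2(\Omega)} \le \|a_h^{n-1/2}\|_{h,\Omega}$ and Young's inequality one obtains $\tau|F_n| \le \tfrac{1}{2}(\|b_h^n\|_{L^2(\Omega)}^2 + \|a_h^{n-1/2}\|_{h,\Omega}^2)$, hence
\begin{align*}
\tfrac{1}{2}\bigl(\|a_h^{n-1/2}\|_{h,\Omega}^2 + \|b_h^n\|_{L^2(\Omega)}^2\bigr) \le \mathcal E^n \le \tfrac{3}{2}\bigl(\|a_h^{n-1/2}\|_{h,\Omega}^2 + \|b_h^n\|_{L^2(\Omega)}^2\bigr).
\end{align*}
Summing the energy identity from $k=0$ to $k=n-1$, estimating the source contribution via Cauchy--Schwarz, Young, and $\|\widehat b_h^{\,k+1/2}\|^2 \le \tfrac{1}{2}(\|b_h^k\|^2+\|b_h^{k+1}\|^2)$, and invoking the discrete Grönwall inequality (admissible since $\tau \le 1/2$ by (A4)) produces
\begin{align*}
\mathcal E^n \le C_T \Bigl(\mathcal E^0 + \sum_{k=0}^{n-1}\tau\|g_h^{k+1/2}\|_{L^2(\Omega)}^2\Bigr).
\end{align*}

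To finish, one must translate between $\mathcal E^n$ and the quantities actually appearing in \eqref{eq:discenergyest}. Writing $\widehat a_h^{\,n} = a_h^{n-1/2} + \tfrac{\tau}{2}\dtau a_h^{\,n}$ and testing \eqref{eq:dee1} in its dual form yields $\tau\|\dtau a_h^{\,n}\|_{h,\Omega} \le \|b_h^n\|_{L^2(\Omega)}$ via \eqref{eq:inverse2}; combined with the norm equivalence \eqref{eq:norm} this gives $\|\widehat a_h^{\,n}\|_{L^2(\Omega)}^2 \le C\bigl(\|a_h^{n-1/2}\|_{h,\Omega}^2 + \|b_h^n\|_{L^2(\Omega)}^2\bigr) \le C\mathcal E^n$, and running the same chain of inequalities at $n=0$ provides $\mathcal E^0 \le C(\|\widehat a_h^{\,0}\|_{L^2(\Omega)}^2 + \|b_h^{\,0}\|_{L^2(\Omega)}^2)$. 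The single non-routine step, and the main obstacle of the proof, is the construction and positivity of the modified energy: the bilinear correction $\tau F_n$ is formally of the same order as the leading terms, and only the inverse inequality \eqref{eq:inverse2} supplied by the CFL condition (A4) makes the perturbation absorbable without loss of stability; the very same estimate is then reused to pass from $a_h^{n-1/2}$ to its centered average $\widehat a_h^{\,n}$.
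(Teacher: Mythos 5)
Your argument is correct and follows essentially the same route as the paper: both are the standard leapfrog energy estimate in which a modified discrete energy, shown to be equivalent to the natural one via the inverse inequality \eqref{eq:inverse2} under the CFL condition (A4), satisfies a telescoping identity that is then closed by a discrete Gr\"onwall argument using $\tau\le 1/2$. In fact your modified energy $\|a_h^{n-1/2}\|_{h,\Omega}^2+\|b_h^n\|_{L^2(\Omega)}^2+\tau(b_h^n,\div\,a_h^{n-1/2})_\Omega$ coincides, by \eqref{eq:dee1}, with the paper's $\|\widehat a_h^{\,n}\|_{h,\Omega}^2+\|b_h^n\|_{L^2(\Omega)}^2-\tfrac{\tau^2}{4}\|\dtau a_h^{\,n}\|_{h,\Omega}^2$, since both equal $(a_h^{n+1/2},a_h^{n-1/2})_{h,\Omega}+\|b_h^n\|_{L^2(\Omega)}^2$.
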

\begin{proof}
We essentially follow the arguments presented in \cite{Joly03}. 
Testing equation \eqref{eq:dee1} for index $(n+1)$ and index $n$ with $v_h=\tau a_h^{n+1/2}$
and summing up the two results leads to
\begin{align*}
0 &= \tau (\dtau a_h^{n+1} + \dtau a_h^n, a_h^{n+1/2})_\Omega - \tau (b_h^{n+1} + b_h^n, \div\,a_h^{n+1/2})_\Omega.
\end{align*}
The first term on the right hand side can be evaluated via
\begin{align*}
\tau  (\dtau a_h^{n+1} &+ \dtau a_h^n, a_h^{n+1/2})_\Omega
= (a_h^{n+3/2} - a_h^{n-1/2},a_h^{n+1/2})_\Omega  \\
&= \|\widehat a_h^{\,n+1}\|_{L^2(\Omega)}^2 - \frac{\tau^2}{4} \|\dtau a_h^{n+1}\|_{L^2(\Omega)}^2 -
 \|\widehat a_h^{\,n}\|_{L^2(\Omega)}^2 + \frac{\tau^2}{4} \|\dtau a_h^n\|_{L^2(\Omega)}^2.
\end{align*}
As a next step, we test the second equation \eqref{eq:dee2} with $q_h=\tau (b_h^{n+1} + b_h^n)$, 
which yields 
\begin{align*} 
\tau (g_h^{n+1/2},b_h^{n+1} + b_h^n)_\Omega
&= \tau (\dtau b_h^{n+1/2},b_h^{n+1} + b_h^n)_\Omega + \tau (\div \, a_h^{n+1/2}, b_h^{n+1} + b_h^n)_\Omega\\
  &= \|b_h^{n+1}\|^2_{L^2(\Omega)} - \|b_h^n\|^2_{L^2(\Omega)} + \tau (\div\,a_h^{n+1/2}, b_h^{n+1} + b_h^n)_\Omega. \notag
\end{align*}
Summing up these intermediate results, we arrive at 
\begin{align}
\|\widehat a_h^{\,n+1}\|_{h,\Omega}^2 &+ \|b_h^{n+1}\|_{L^2(\Omega)}^2 - 
\frac{\tau^2}{4} \|\dtau a_h^{n+1}\|_{h,\Omega} \label{eq:discenergy}\\
&= \|\widehat a_h^{\,n}\|_{h,\Omega}^2 + \|b_h^n\|_{L^2(\Omega)}^2 - \frac{\tau^2}{4} \|\dtau a_h^n\|_{h,\Omega}^2 \notag
 + \tau (g_h^{n+1/2},b_h^{n+1} + b_h^n)_\Omega.
\end{align}
Using equation \eqref{eq:dee1} and the estimate \eqref{eq:inverse2} leads to
\begin{align*}
\|\dtau a_h^n\|_{h,\Omega}^2 &= (b_h^n,\div \, \dtau a_h^n)_\Omega \le \frac{1}{\tau} \|b_h^n\|_{L^2(\Omega)} \|\dtau a_h^n\|_{h,\Omega}. 
\end{align*}
This shows that the energy $E_h^n := \|\widehat a_h^{\,n}\|_{h,\Omega}^2 + \|b_h^n\|_{L^2(\Omega)}^2 - 
\frac{\tau^2}{4} \|\dtau a_h^n\|_{h,\Omega}^2$ is positive with
\begin{align} \label{eq:equivalence}
E_h^n \le \|\widehat a_h^{\,n}\|^2_{h,\Omega} + \|b_h^n\|^2_{L^2(\Omega)} \le 2 E_h^n. 
\end{align}
By the Cauchy-Schwarz and Young inequalities, we can then further estimate
the last term in the identity \eqref{eq:discenergy} by
\begin{align*}
\tau (g_h^{n+1/2},b_h^{n+1} + b_h^n)_\Omega
&\le \tau \|g_h^{n+1/2}\|^2_{L^2(\Omega)} + \frac{\tau}{2} \|b_h^{n+1}\|^2_{L^2(\Omega)}  + \frac{\tau}{2} \|b_h^{n}\|^2_{L^2(\Omega)} \\
&\le \tau \|g_h^{n+1/2}\|^2_{L^2(\Omega)} + \tau E_h^{n+1} + \tau E_h^{n}.
\end{align*}
Inserting this estimate into equation \eqref{eq:discenergy}, we therefore obtain 
\begin{align*}
(1-\tau) E_h^{n+1} \le (1+\tau) E_h^{n} + \tau \|g_h^{n+1/2}\|_{L^2(\Omega)}^2. 
\end{align*}
A recursive application of this inequality finally leads to 
\begin{align*}
E_h^{n} &\le \left(\tfrac{1+\tau}{1-\tau}\right)^n  E_h^0 + \tfrac{\tau}{1-\tau}
\sum_{k=0}^{n-1} \left(\tfrac{1+\tau}{1-\tau}\right)^{n-k-1}  \|g_h^{n+1/2}\|_{L^2(\Omega)}^2 \\
&\le e^{2n\tau} \left(E_h^0 + 2 \sum_{k=0}^{n-1} \tau \|g_h^{n+1/2}\|^2_{L^2(\Omega)} \right).
\end{align*}
Here we used the condition $\tau \le 1/2$ in the last step. 
The assertion of the Lemma now follows by using the norm equivalence estimate \eqref{eq:equivalence}.
\end{proof}

\subsection{Taylor estimates}

As a next step in our analysis, 
we now derive bounds for the initial errors and residuals arising in the right-hand side of the estimate \eqref{eq:discenergyest}. 
Let us start by estimating the errors in the initial conditions.
\begin{lemma} [Taylor estimates, part one] \label{lemma:taylorest1} 
Let (A1)--(A4) hold. Then
\begin{align*}
\|\widehat a_h^{\,0}\|_{L^2(\Omega)} + \|b_h^{\,0}\|_{L^2(\Omega)} 
& \le (h^2+\tau^2) C_1(u),
\end{align*}
with constant $C_1(u)$ of the form as defined in Theorem~\ref{theorem:discreteerror}.
\end{lemma}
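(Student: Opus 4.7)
The plan is to bound $\|b_h^0\|_{L^2}$ and $\|\widehat a_h^{\,0}\|_{L^2}$ separately, each by combining an $O(h^2)$ spatial estimate from Lemma~\ref{lemma:approximations} with an $O(\tau^2)$ temporal estimate arising from symmetric Taylor expansions around $t=0$ that are made possible by the leapfrog startup in \eqref{eq:sysvdf1}.

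For $b_h^0$, substituting $p_h^1 = p_h^0 - \tau \div u_h^{1/2}$ (from \eqref{eq:sysvdf3} at $n=0$) and the identity $\div u_h^{\,*}(t^{1/2}) = -\pi_h^0 \dt p(t^{1/2})$ (which follows from Problem~\ref{problem:auxfunctions} together with the PDE $\dt p + \div u = 0$) into the defining relation $b_h^0 = b_h^1 + \tau \div a_h^{1/2}$ yields the explicit identity $b_h^0 = \pi_h^0 p(0) + \tau\,\pi_h^0 \dt p(t^{1/2}) - \widehat p_h^{\,*}(t^1)$. Midpoint quadrature gives $\pi_h^0 p(0) + \tau\,\pi_h^0 \dt p(t^{1/2}) = \pi_h^0 p(t^1) + O(\tau^3)$, and symmetric averaging around $t^1$ gives $\widehat p_h^{\,*}(t^1) = p_h^{\,*}(t^1) + O(\tau^2\|\dtt p_h^{\,*}\|)$; hence $b_h^0 = (\pi_h^0 p(t^1) - p_h^{\,*}(t^1)) + O(\tau^2)$, and Lemma~\ref{lemma:approximations} (invoking (A2) for the first summand, and $k=2$ for the residual) yields $\|b_h^0\|_{L^2} \le (h^2 + \tau^2) C_1(u)$.

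For $\widehat a_h^{\,0}$, the key algebraic observation is $\widehat u_h^{\,0} = u_h^{\,*}(0)$ in $V_h$: averaging the identities $(u_h^{\pm 1/2}, v_h)_{h,\Omega} = (u_h^{\,*}(0), v_h)_{h,\Omega} \pm \tfrac{\tau}{2}(p_h^0, \div v_h)_\Omega$ obtained from \eqref{eq:sysvdf1} and one application of \eqref{eq:sysvdf2} at $n=0$, the divergence terms cancel exactly. To turn this into a bound, I would extend $u_h^{\,*}, p_h^{\,*}$ to $[-\tau,0]$ by backward-in-time continuation of the system of Problem~\ref{problem:auxfunctions}, and set $\widetilde a_h^{-1/2} := u_h^{-1/2} - u_h^{\,*}(-\tau/2)$ and $\widetilde b_h^{\,0} := p_h^0 - \widehat p_h^{\,*}(0)$. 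A direct computation yields $(\widetilde a_h^{-1/2} - a_h^{-1/2}, v_h)_{h,\Omega} = \tau(b_h^0 - \widetilde b_h^{\,0}, \div v_h)_\Omega$ with $\widetilde b_h^{\,0} - b_h^0 = \tau[\dt p_h^{\,*}(t^{1/2}) - \pi_h^0 \dt p(t^{1/2})] + O(\tau^3) = O(\tau h^2 + \tau^3)$ by the same Taylor argument combined with Lemma~\ref{lemma:approximations} at $k=1$; inverting via the inverse inequality \eqref{eq:inverse} under the CFL assumption (A4) then produces $\|\widetilde a_h^{-1/2} - a_h^{-1/2}\|_{L^2} \le C\tau^2$. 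The remaining term $\tfrac{1}{2}(a_h^{1/2} + \widetilde a_h^{-1/2}) = u_h^{\,*}(0) - \tfrac{1}{2}(u_h^{\,*}(\tau/2) + u_h^{\,*}(-\tau/2))$ is a classical symmetric second-order Taylor remainder of size $C\tau^2 \|\dtt u_h^{\,*}\|_{L^\infty(L^2)}$, which is controlled by Lemma~\ref{lemma:approximations} applied with $k=2$.

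The main obstacle is making the backward extension of the auxiliary functions rigorous while keeping constants tied to the norms of $u$ and $\div u$ on $[0,T]$ entering $C_1(u)$; this is possible because the semi-discrete auxiliary system is time-reversible and $u$ is assumed smooth enough to extend across $t=0$. A secondary subtlety is the alignment of pointwise Taylor remainders at $t=0$ with the $W^{2,1}$-in-time norms entering $C_1(u)$, handled via the embedding $W^{2,1}(0,T) \hookrightarrow C^1([0,T])$.
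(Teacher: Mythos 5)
Your identity for $b_h^0$ and its estimation coincide with the paper's argument up to a regrouping of terms: the paper splits $b_h^0$ into $\big(\pi_h^0 p(0)-\pi_h^0\widehat p(t^1)+\tau\,\pi_h^0\dt p(t^{1/2})\big)$ plus $\big(\pi_h^0\widehat p(t^1)-\widehat p_h^{\,*}(t^1)\big)$, so that the Taylor expansion acts only on the exact pressure (via $\dtt p=-\div\,\dt u$) and the second term is bounded directly by $\|\pi_h^0 p-p_h^*\|_{L^\infty(0,T;L^2(\Omega))}\le h^2C_1(u)$ without ever differentiating $p_h^*$ twice in time; your grouping $b_h^0=(\pi_h^0p(t^1)-p_h^*(t^1))+O(\tau^2)$ instead requires a bound on $\|\dtt p_h^{\,*}\|$, which via Lemma~\ref{lemma:approximations} drags in third time derivatives of $u$ and hence a constant slightly stronger than $C_1(u)$ --- harmless for ``sufficiently smooth'' data and fixable by adopting the paper's regrouping. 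For $\widehat a_h^{\,0}$ the two proofs genuinely diverge. The paper never leaves $[0,T]$: it uses $(\widehat a_h^{\,0},v_h)_{h,\Omega}=(a_h^{1/2},v_h)_{h,\Omega}-\tfrac{\tau}{2}(b_h^0,\div\,v_h)_\Omega$, expands $(a_h^{1/2},v_h)_{h,\Omega}$ through the variational relations into $\tfrac{\tau}{2}(\pi_h^0p(0)-p_h^*(0),\div\,v_h)_\Omega-\int_0^{t^{1/2}}\!\int_0^s(\dt p_h^*(r),\div\,v_h)_\Omega\,dr\,ds$, and closes with the inverse inequality, the CFL condition and Lemma~\ref{lemma:approximations}. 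You instead exploit the exact cancellation $\widehat u_h^{\,0}=u_h^*(0)$ (correct, and indeed the same observation the paper uses later in the proof of Theorem~\ref{theorem:ufullpost}), compare $a_h^{-1/2}$ with $u_h^{-1/2}-u_h^*(-\tau/2)$, and reduce $\widehat a_h^{\,0}$ to a symmetric second difference of $u_h^*$; your intermediate identities all check out. What this more transparent route costs is the extension of $(u,p)$, and hence of $(u_h^*,p_h^*)$, to $[-\tau,0]$ (legitimate by reversibility of the system, but an extra construction whose norms on $[-\tau,0]$ then enter the constant) together with control of $\|\dtt u_h^*\|_{L^\infty(L^2)}$, which again asks marginally more regularity of $u$ than $C_1(u)$ records; the paper's forward-only computation avoids both issues.
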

\begin{proof}
By definition of $b_h^0$, we have 
\begin{align*}
b_h^0&=b_h^1+\tau\,\div\,a_h^{1/2}
=p_h^1-\widehat p_h^{\,*}(t^1)+\tau\,\div\,\big(u_h^{1/2}-u_h^*(t^{1/2})\big)\\
&=p_h^1-p_h^0 + \tau\,\div\, u_h^{1/2} +p_h^0-\widehat p_h^{\,*}(t^1)-\tau\,\pi_h^0 \div\,u(t^{1/2})\\
&=\pi_h^0 p(0)-\widehat p_h^{\,*}(t^1)+\tau\,\pi_h^0 \dt p(t^{1/2})\\
&=\big(\pi_h^0 p(0)-\pi_h^0 \widehat p(t^1)+\tau\,\pi_h^0 \dt p(t^{1/2})\big) + 
\big(\pi_h^0\widehat p(t^1)-\widehat p_h^{\,*}(t^1)\big)
=(i)+(ii).
\end{align*}
Using standard Taylor estimates, the first term can be estimated by
\begin{align*}
\|(i)\|_{L^2(\Omega)} 
&\leq \tau^2 \|\dtt p\|_{L^\infty(0,T;L^2(\Omega))} 
 = \tau^2 \|\div \; \dt u\|_{L^\infty(0,T;L^2(\Omega))}
 \le \tau^2 C_1(u),
\end{align*}
where we used equation \eqref{eq:sysm2} in the second step and 
 $\|\cdot\|_{L^\infty(0,T;X)} \le C \|\cdot\|_{W^{1,1}(0,T;X)}$ by 
continuity of the embedding.
The second term can be further estimated by Lemma~\ref{lemma:approximations} according to 
$\|(ii)\|\leq \|\pi_h^0 p-p_h^*\|_{L^\infty(0,T;L^2(\Omega))}\le h^2 C_1(u).$
This completes the estimate for $b_h^0$ and we can turn to that for $\widehat a_h^{\,0}$.
Using the definitions in Lemma~\ref{lemma:fulldiscrete}, one can see that
\begin{align} \label{eq:est99}
(\widehat a_h^{\,0},v_h)_{h,\Omega} = \frac{1}{2}(a_h^{1/2}+a_h^{-1/2},v_h)_{h,\Omega} = 
(a_h^{1/2},v_h)_{h,\Omega} - \frac{\tau}{2}(b_h^0,\div\,v_h)_\Omega.
\end{align}
From the construction of $u_h^{-1/2}$ and the first recursion of Problem~\ref{problem:fulldiscrete} for $n=0$, we get
\begin{align*}
(u_h^{1/2},v_h)_{h,\Omega} = (u_h^0,v_h)_{h,\Omega} + \frac{\tau}{2} (p_h^0,\div \, v_h)_\Omega \qquad \forall v_h \in V_h.
\end{align*}
Recalling the definition of $a_h^{1/2}$ given in Lemma~\ref{lemma:fulldiscrete}, we obtain
\begin{align*}
(a_h^{1/2},v_h)_{h,\Omega}
&=(u_h^{1/2}-u_h^*(t^{1/2}),v_h)_{h,\Omega}=
\frac{\tau}{2} (p_h^0,\div \, v_h)_\Omega - \int_0^{t^{1/2}}(\dt u_h^*(s),v_h)_{h,\Omega}\,ds\ \\
&=\frac{\tau}{2} (\pi_h^0p(0)-p_h^*(0),\div \, v_h)_\Omega +\int_0^{t^{1/2}}(p_h^*(0)-p_h^*(s),\div\,v_h)_\Omega\,ds\\
&=\frac{\tau}{2} (\pi_h^0p(0)-p_h^*(0),\div \, v_h)_\Omega -\int_0^{t^{1/2}}\int_0^s\left(\dt p_h^*(r),\div\,v_h\right)_\Omega\,dr\,ds\\
&=(i) + (ii).
\end{align*}
For the second identity, we here used the definition $u_h^0=u_h^*(0)$ of the initial value for the discrete velocity. The first term can be further estimated by
\begin{align*}
|(i)|\le c\tau h^{-1}\|\pi_h^0p(0)-p_h^*(0)\|_{L^2(\Omega)}\|v_h\|_{L^2(\Omega)}\leq h^2 C_1(u)\|v_h\|_{L^2(\Omega)}
\end{align*}
By adding and subtracting $\dt p(r)$ under the integral, using integration-by-parts for one of the terms, and applying the triangle and Cauchy-Schwarz inequalities, we obtain
\begin{align*}
|(ii)|
& \le \tau \|\pi_h^0 \dt p - \dt p_h^*\|_{L^1(0,T;L^2(\Omega))} \|\div\,v_h\|_{L^2(\Omega)}
+ \tau^2 \|\dtt p\|_{L^\infty(0,T;H^1(\Omega))} \|v_h\|_{L^2(\Omega)}.
\end{align*}
With the inverse inequality \eqref{eq:inverse}, Lemma~\ref{lemma:approximations}, 
and equation \eqref{eq:sysm2}, one can further see that
\begin{align*}
|(a_h^{1/2},v_h)_{h,\Omega}|
&\le \big(ch^2 C_1(u) + \tau^2 \|\div \; \dt u\|_{L^\infty(0,T;L^2(\Omega))} \big) \|v_h\|_{L^2(\Omega)}.
\end{align*}
This allows us to estimate the first term in \eqref{eq:est99} by $|(a_h^{1/2},v_h)_{h,\Omega}| \le (h^2+\tau^2) C_1(u) \|v_h\|_{L^2(\Omega)}$.
Using the CFL condition (A4), we can bound the second term in \eqref{eq:est99} by
\begin{align*}
\big|\frac{\tau}{2}(b_h^0,\div\,v_h)_\Omega\big| 
&\le \|b_h^0\|_{L^2(\Omega)} \|v_h\|_{L^2(\Omega)}.
\end{align*}
A combination of the individual estimates for the two terms in \eqref{eq:est99} and the previous estimate for $\|b_h^0\|_{L^2(\Omega)}$ now yields the desired bound for $\widehat a_h^{\,0}$ and completes the proof.
\end{proof}

As a next step, we derive appropriate bounds for the residuals $g_h^{n+1/2}$ in Lemma~\ref{lemma:fulldiscrete}.

\begin{lemma} [Taylor estimates, part two] \label{lemma:taylorest2} 
Let (A1)--(A3) hold. Then 
\begin{align*}
\bigg(\sum_{n=0}^{N-2} \tau \|g_h^{n+1/2}\|_{L^2(\Omega)}^2 \bigg)^{1/2}
& \le (h^2 + \tau^2) C_1(u)
\end{align*}
with constant $C_1(u)$ of the same form as defined in Theorem~\ref{theorem:discreteerror}.
\end{lemma}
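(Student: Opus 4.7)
The plan is to split $g_h^{n+1/2}$ via the identity
\[
g_h^{n+1/2} = \underbrace{\pi_h^0\bigl[\dt p(t^{n+1/2}) - \dtau \widehat p(t^{n+1/2})\bigr]}_{=:(I_n)}
\;+\; \underbrace{\dtau\bigl[\pi_h^0 \widehat p(t^{n+1/2}) - \widehat p_h^{\,*}(t^{n+1/2})\bigr]}_{=:(II_n)},
\]
which is legal because the spatial projection $\pi_h^0$ commutes with both $\dtau$ and the time-averaging operator $\widehat{\,\cdot\,}$. The term $(I_n)$ is a pure leapfrog consistency error applied to the exact pressure, while $(II_n)$ encodes the time increment of the elliptic projection error $\psi(t) := \pi_h^0 p(t) - p_h^{\,*}(t)$. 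The two contributions will be bounded separately in the discrete norm $\bigl(\sum_{n=0}^{N-2}\tau\|\cdot\|_{L^2(\Omega)}^2\bigr)^{1/2}$.

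To handle $(I_n)$, I would apply standard Taylor analysis. Using $\dt\widehat p(s) = \tau^{-1}\bigl(p(s+\tau/2) - p(s-\tau/2)\bigr)$ together with Peano-kernel representations for the centered difference operators appearing in $\dt p(t^{n+1/2}) - \dtau\widehat p(t^{n+1/2})$, one obtains the integral-form bound
\[
\|\dt p(t^{n+1/2}) - \dtau\widehat p(t^{n+1/2})\|_{L^2(\Omega)}
\le C\tau \int_{t^{n+1/2}-\tau}^{t^{n+1/2}+\tau} \|\dttt p(s)\|_{L^2(\Omega)}\,ds,
\]
which also controls $\|(I_n)\|_{L^2(\Omega)}$ since $\pi_h^0$ is an $L^2$-contraction. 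Squaring, applying Cauchy--Schwarz in time, and summing (noting that each time instant lies in only a bounded number of the enlarged intervals) gives $\sum_n\tau\|(I_n)\|^2 \le C\tau^4\|\dttt p\|_{L^2(0,T;L^2(\Omega))}^2$. Differentiating \eqref{eq:sysm2} twice yields $\dttt p = -\div \dtt u$, so this contribution is bounded by $\tau^2 C_1(u)$.

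For $(II_n)$, set $\psi(t) := \pi_h^0 p(t) - p_h^{\,*}(t)$ and observe the key identities
\[
\tau\,\dtau\widehat\psi(t^{n+1/2}) = \int_{t^n}^{t^{n+1}} \dt\widehat\psi(s)\,ds,
\qquad
\dt\widehat\psi(s) = \tfrac{1}{\tau}\int_{s-\tau/2}^{s+\tau/2} \dt\psi(r)\,dr.
\]
Exchanging the order of integration shows that $\dtau\widehat\psi(t^{n+1/2})$ is a local average of $\dt\psi$ over $[t^{n+1/2}-\tau,\,t^{n+1/2}+\tau]$. Jensen's inequality then yields $\|(II_n)\|_{L^2(\Omega)}^2 \le C\tau^{-1}\int_{t^{n+1/2}-\tau}^{t^{n+1/2}+\tau}\|\dt\psi(r)\|_{L^2(\Omega)}^2\,dr$, and summing over $n$ produces $\sum_n\tau\|(II_n)\|^2 \le C\|\dt\psi\|_{L^2(0,T;L^2(\Omega))}^2$. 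Lemma~\ref{lemma:approximations} applied with $k=1$ bounds the right-hand side by $h^4$ times squared norms of $C_1(u)$-type, using the continuous embedding of $L^\infty(0,T)$ into $L^2(0,T)$ on the finite interval to match the $W^{k,1}$-in-time convention of the constant.

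The crucial observation that makes $(II_n)$ go through is that $\dtau\widehat{\,\cdot\,}$ is not a second time derivative but merely a local average of a first time derivative; this is what allows Lemma~\ref{lemma:approximations} to be invoked with $k=1$ and delivers the desired $h^2$ rate for the projection part. The main technical obstacle is executing the averaging manipulations cleanly without introducing spurious time-regularity requirements. Once done, combining the bounds for $(I_n)$ and $(II_n)$ produces the claimed estimate $(h^2+\tau^2)C_1(u)$.
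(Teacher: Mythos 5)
Your proposal is correct and follows essentially the same route as the paper: the identical splitting of $g_h^{n+1/2}$ into the leapfrog consistency error $\dt p(t^{n+1/2})-\dtau\widehat p(t^{n+1/2})$ (bounded via $\dttt p=-\div\,\dtt u$) and the time increment of the projection error (recognized as a local average of $\pi_h^0\dt p-\dt p_h^*$ and bounded by Lemma~\ref{lemma:approximations}), followed by summation in $n$. The only cosmetic difference is that you retain $\pi_h^0$ on the first term and note it is an $L^2$-contraction, whereas the paper drops it directly.
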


\begin{proof}
By elementary computations and the definition of $g_h^{n+1/2}$ in Lemma~\ref{lemma:fulldiscrete}, we have
\begin{align*}
\|g_h^{n+1/2}\|_{L^2(\Omega)} 
&\le \|\dt p(t^{n+1/2}) - \dtau \widehat p(t^{n+1/2})\|_{L^2(\Omega)} 
    + \| \dtau \pi_h^0\widehat p(t^{n+1/2}) - \dtau \widehat p_h^{\,*}(t^{n+1/2})\|_{L^2(\Omega)}\\
&=(i)+(ii).
\end{align*}
Using standard Taylor expansions and Cauchy-Schwarz inequalities, one can verify that
\begin{align*}
|(i)|^2
&\leq \tau^3 \int_{t^{k-1/2}}^{t^{n+3/2}}\|\dttt p(s)\|_{L^2(\Omega)}^2\,ds 
=\tau^3 \int_{t^{n-1/2}}^{t^{n+3/2}}\|\div \; \dtt u(s)\|_{L^2(\Omega)}^2\,ds,
\end{align*}
where we used equation \eqref{eq:sysm2} in the last step. 
Via the fundamental theorem of calculus and Theorem~\ref{theorem:discreteerror}, 
the second term can be further estimated by 
\begin{align*}
|(ii)|^2 
&\leq \frac{1}{\tau}\int_{t^{n-1/2}}^{t^{n+3/2}}\|\pi_h^0 \dt p(s)- \dt p_h^*(s)\|_{L^2(\Omega)}^2\,ds \\ 
&\leq C \frac{h^4}{\tau}\int_{t^{n-1/2}}^{t^{n+3/2}}\|\dtt u(s)\|_{H^1(\Omega)}^2 + \|\div\,\dtt u(s)\|_{H^1(\Omega)}^2\,ds.
\end{align*}
Here we used the estimates of Lemma~\ref{lemma:approximations} in the second step. 
The assertion of the Lemma then follows by summation over all $0 \le n \le N-2$. 
\end{proof}

\subsection{Error estimate for the fully discrete scheme}

A combination of the auxiliary results stated above now allows us to prove the following assertions.
\begin{theorem}[Estimates for the discrete error] \label{theorem:fulldiscreteerror} 
Let (A1)--(A4) hold. Then 
\begin{align*}
\|\widehat u_h^{\,n} - \widehat u_h^{\,*}(t^n)\|_{L^2(\Omega)} 
  + \|p_h^n - \widehat p_h^{\,*}(t^n)\|_{L^2(\Omega)}
\le (h^2+\tau^2) C_1(u)
\end{align*}
for all $0 \le n \le N-1$ with constant $C_1(u)$ as in Lemma~\ref{theorem:discreteerror}. 
In addition, we also have
\begin{align*}
\|p_h^n - p_h^*(t^n)\|_{L^2(\Omega)} + \|p_h^n - \pi_h^0 p(t^n)\|_{L^2(\Omega)} 
&\le (h^2+\tau^2) C_1(u) .
\end{align*}
\end{theorem}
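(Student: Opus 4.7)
The plan is to combine the three auxiliary results just established in Lemmas~\ref{lemma:fulldiscrete}--\ref{lemma:taylorest2}. First I would invoke the discrete energy estimate of Lemma~\ref{lemma:discenergyest} for the sequences $(a_h^{n-1/2}, b_h^n)$, which by Lemma~\ref{lemma:fulldiscrete} satisfy the perturbed discrete system \eqref{eq:id1}--\eqref{eq:id2} with right-hand side $g_h^{n+1/2}$ given by \eqref{eq:id3}. This reduces the task to bounding the initial data $\|\widehat a_h^{\,0}\|_{L^2(\Omega)}+\|b_h^{0}\|_{L^2(\Omega)}$ and the quadrature sum $\bigl(\sum_{k=0}^{n-1}\tau\|g_h^{k+1/2}\|_{L^2(\Omega)}^2\bigr)^{1/2}$. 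The former is covered by Lemma~\ref{lemma:taylorest1}, the latter by Lemma~\ref{lemma:taylorest2}, and both produce the common bound $(h^2+\tau^2)\,C_1(u)$. Taking square roots in the resulting energy inequality already yields $\|\widehat a_h^{\,n}\|_{L^2(\Omega)}+\|b_h^n\|_{L^2(\Omega)} \le (h^2+\tau^2)\,C_1(u)$.

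Unfolding the definitions $b_h^n = p_h^n - \widehat p_h^{\,*}(t^n)$ and $a_h^{n-1/2} = u_h^{n-1/2} - u_h^*(t^{n-1/2})$ gives the first estimate immediately for the pressure, and for the velocity modulo the small discrepancy between $\tfrac{1}{2}\bigl(u_h^*(t^{n+1/2})+u_h^*(t^{n-1/2})\bigr)$ and $\widehat u_h^{\,*}(t^n)$. That discrepancy is a standard midpoint-versus-integral quadrature remainder of order $\tau^2\|\dtt u_h^{\,*}\|_{L^\infty(0,T;L^2(\Omega))}$, which by Lemma~\ref{lemma:approximations} and the first-order system \eqref{eq:sysm1}--\eqref{eq:sysm2} is subsumed into $(h^2+\tau^2)\,C_1(u)$.

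For the two supplementary estimates I would use further triangle inequalities. Splitting $\|p_h^n - p_h^*(t^n)\|_{L^2(\Omega)} \le \|p_h^n - \widehat p_h^{\,*}(t^n)\|_{L^2(\Omega)} + \|\widehat p_h^{\,*}(t^n) - p_h^*(t^n)\|_{L^2(\Omega)}$, the first summand is already controlled and the second is a Taylor-averaging remainder of order $\tau^2\|\dtt p_h^{\,*}\|_{L^\infty(0,T;L^2(\Omega))}$; using $\dt p = -\div u$ from \eqref{eq:sysm2} together with Lemma~\ref{lemma:approximations} again absorbs it into $\tau^2\, C_1(u)$. The bound on $\|p_h^n - \pi_h^0 p(t^n)\|_{L^2(\Omega)}$ then follows by one more triangle inequality, combined with the $O(h^2)$ estimate $\|\pi_h^0 p - p_h^*\|_{L^\infty(0,T;L^2(\Omega))} \le h^2\, C_1(u)$ from Lemma~\ref{lemma:approximations}. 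At this point the proof is essentially bookkeeping; the real work has been done in the three preceding lemmas. The only subtlety worth flagging is ensuring that the several time-averaging correction terms, which involve second time derivatives of the auxiliary functions, can all be traced back through the wave equation and Lemma~\ref{lemma:approximations} to the norms already encoded in $C_1(u)$.
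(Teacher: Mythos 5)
Your proposal is correct and follows essentially the same route as the paper: the first assertion is obtained by combining Lemma~\ref{lemma:fulldiscrete} with the energy estimate of Lemma~\ref{lemma:discenergyest} and the Taylor bounds of Lemmas~\ref{lemma:taylorest1}--\ref{lemma:taylorest2}, and the supplementary pressure estimates follow from triangle inequalities, averaging remainders of order $\tau^2$, and Lemma~\ref{lemma:approximations}. The only cosmetic difference is that for $\|\widehat p_h^{\,*}(t^n)-p_h^{*}(t^n)\|_{L^2(\Omega)}$ the paper inserts $\pi_h^0\widehat p(t^n)$ so that the Taylor remainder involves $\dtt p=-\div\,\dt u$ of the exact solution rather than $\dtt p_h^{\,*}$, which keeps the constant literally of the form $C_1(u)$; your direct bound via $\dtt p_h^{\,*}$ works but is slightly less tidy on that point.
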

\begin{proof}
A combination of Lemma~\ref{lemma:fulldiscrete}, \ref{lemma:discenergyest}, \ref{lemma:taylorest1},
and \ref{lemma:taylorest2} already yields the first assertion. 
The first term in the second estimate can be bounded by
\begin{align*}
\|p_h^n - p_h^*(t^n)\|_{L^2(\Omega)}
&\le \|p_h^n-\widehat p_h^{\,*}(t^n)\|_{L^2(\Omega)} + \|\widehat p_h^{\,*}(t^n)-p_h^*(t^n)\|_{L^2(\Omega)}=(i)+(ii).
\end{align*}
The first part is readily covered by the first assertion of the theorem. 
Using Taylor expansions and Lemma~\ref{lemma:approximations}, the second part can be further estimated by 
\begin{align*}
(ii) 
&\leq \|\widehat p_h^{\,*}(t^n) - \pi_h^0 \widehat p(t^n)\|_{L^2(\Omega)} + \|\pi_h^0 \widehat p(t^n) - \pi_h^0 p(t^n)\|_{L^2(\Omega)} + \|\pi_h^0 p(t^n) - p_h^{*}(t^n)\|_{L^2(\Omega)} \\
&\le \tau^2 \|\dtt p\|_{L^\infty(0,T;L^2(\Omega))} + h^2 C_1(u)=\tau^2 \|\div\,\dt u\|_{L^\infty(0,T;L^2(\Omega))} + h^2 C_1(u).
\end{align*}
For the second term in the second estimate of the Theorem, we observe that 
\begin{align*}
\|p_h^n - \pi_h^0 p(t^n)\|_{L^2(\Omega)}\le \|p_h^n-p_h^*(t^n)\|_{L^2(\Omega)} + 
\|p_h^*(t^n)-\pi_h^0 p(t^n)\|_{L^2(\Omega)}.
\end{align*}
These terms are covered by Lemma~\ref{lemma:approximations} and Lemma~\ref{lemma:discenergyest}, respectively.
\end{proof}

Similar to the semi-discrete level, we may again also obtain an error estimate
of first order. For completeness, we state the corresponding result explicitly.
\begin{theorem} \label{theorem:fulldiscreteerror2}
Let (A1)--(A4) hold. Then for $0 \le n \le N-1$, we have
\begin{align*}
\|\widehat u_h^{\,n} - u(t^n)\|_{L^2(\Omega)} + \|p_h^n - p(t^n)\|_{L^2(\Omega)} \le (h+\tau) C_1(u).
\end{align*}
\end{theorem}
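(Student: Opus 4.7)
The plan is to split each error into a discrete-error and an approximation-error component by inserting the auxiliary functions $(u_h^*,p_h^*)$ from Problem~\ref{problem:auxfunctions}. The discrete part is already controlled with the sharp rate $(h^2+\tau^2)C_1(u)$ by Theorem~\ref{theorem:fulldiscreteerror}, which is more than sufficient for the target order $(h+\tau)$; the dominant contribution will therefore come from the approximation error, and can be handled via Lemma~\ref{lemma:approximations} together with elementary Taylor estimates in time.

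For the pressure, I would use the triangle inequality
\begin{align*}
\|p_h^n-p(t^n)\|_{L^2(\Omega)}
\le \|p_h^n - p_h^*(t^n)\|_{L^2(\Omega)} + \|p_h^*(t^n) - p(t^n)\|_{L^2(\Omega)}.
\end{align*}
The first term is bounded by $(h^2+\tau^2)C_1(u)\le (h+\tau)C_1(u)$ via the second assertion of Theorem~\ref{theorem:fulldiscreteerror}. The second is bounded by Lemma~\ref{lemma:approximations} (taking $k=0$, $p=\infty$) by $Ch\,\|u\|_{W^{1,\infty}(0,T;H^1(\Omega))}$, and the latter is in turn controlled by $C_1(u)$ through the continuous embedding $W^{2,1}(0,T)\hookrightarrow W^{1,\infty}(0,T)$ already exploited in Lemma~\ref{lemma:taylorest1}.

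For the velocity I would introduce the time average $\widehat u(t^n)=\tau^{-1}\int_{t^{n-1/2}}^{t^{n+1/2}} u(s)\,ds$ and decompose
\begin{align*}
\|\widehat u_h^{\,n}-u(t^n)\|_{L^2(\Omega)}
\le \|\widehat u_h^{\,n}-\widehat u_h^{\,*}(t^n)\|_{L^2(\Omega)}
  + \|\widehat u_h^{\,*}(t^n)-\widehat u(t^n)\|_{L^2(\Omega)}
  + \|\widehat u(t^n)-u(t^n)\|_{L^2(\Omega)}.
\end{align*}
The first term is bounded by $(h^2+\tau^2)C_1(u)$ by the first assertion of Theorem~\ref{theorem:fulldiscreteerror}. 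The middle term is bounded by pulling the $L^2$-norm inside the time average, yielding at most $\|u_h^*-u\|_{L^\infty(0,T;L^2(\Omega))}\le h\,C_1(u)$ thanks to Lemma~\ref{lemma:approximations}. The last term is a pure Taylor residual of the midpoint average: since the linear Taylor term vanishes by symmetry of the interval about $t^n$, it is bounded by $C\tau^2\|\dtt u\|_{L^\infty(0,T;L^2(\Omega))}$.

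No genuine obstacle is anticipated; the only piece of bookkeeping is to express $\|\dtt u\|_{L^\infty(0,T;L^2(\Omega))}$ within the norm $C_1(u)$ already introduced. This follows exactly as in the proofs of Lemmas~\ref{lemma:taylorest1}--\ref{lemma:taylorest2}: differentiating \eqref{eq:sysm1} once in time and substituting \eqref{eq:sysm2} gives $\dtt u=\nabla\div u$, so that $\|\dtt u\|_{L^\infty(0,T;L^2(\Omega))}\le\|\div u\|_{L^\infty(0,T;H^1(\Omega))}\le C\|\div u\|_{W^{1,1}(0,T;H^1(\Omega))}\le C_1(u)$. Combining everything, the three contributions to the velocity error are bounded by $(h^2+\tau^2)C_1(u)+hC_1(u)+\tau^2 C_1(u)\le (h+\tau)C_1(u)$, and similarly for the pressure, which completes the plan.
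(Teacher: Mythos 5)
Your proposal is correct and follows essentially the route the paper intends: the paper's own proof is a one-line remark that the result ``follows with similar arguments as used in the previous proof'', i.e.\ precisely the splitting into the discrete error (already controlled at order $h^2+\tau^2$ by Theorem~\ref{theorem:fulldiscreteerror}) plus the approximation error from Lemma~\ref{lemma:approximations} and a quadratic Taylor residual in time, which is what you carry out. Your bookkeeping of $\|\dtt u\|_{L^\infty(0,T;L^2(\Omega))}$ via $\dtt u=\nabla\,\div\,u$ and the embedding $W^{1,1}(0,T)\hookrightarrow L^\infty(0,T)$ matches the constants $C_1(u)$ as used elsewhere in the paper.
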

\begin{proof}
The result follows with similar arguments as used in the previous proof. 
\end{proof}
Let us remark that convergence of first order could be obtained also without assumptions (A2) and (A3) and under less stringent smoothness assumptions. Since we are interested in second order estimates, we do not go into details here.

\section{Post-processing for the full discretization} \label{sec:time-post}

With similar arguments as on the semi-discrete level, we can now
construct improved approximations $\widetilde p_h^{\,n}$ and $\widetilde u_h^{\,n}$ 
which are true second order approximations for the solution. 

\subsection{Post-processing for the pressure}

Using a similar construction as for the semi-discrete problem, 
we now consider the following post-processing procedure.
\begin{problem}[Post-processing for the discrete pressure] $ $\\
For all $0 \le n \le N-1$ find $\widetilde p_h^{\,n} \in \P_{1}(\Th)$ such that
\begin{alignat}{2}
(\grad\widetilde p_h^{\,n},\grad \widetilde q_h)_K&=
-(\dtau\u_h^{n},\grad \widetilde q_h)_K \qquad &&\forall \widetilde q_h\in \P_{1}(K), \label{eq:sysppd1} \\ 
(\widetilde p_h^{\,n},q_h^0)_K&=(p_h^{n},q_h^0)_K &&\forall q_h^0\in \P_0(K) \label{eq:sysppd2}.
\end{alignat}
\end{problem}
As before, the improved approximation $\widetilde p_h^{\,n}$ can be computed by solving small linear systems on every element $K$ independently. 
For the analysis of this post-processing scheme, we again need an estimate for the error in the time derivative of the velocity.
\begin{lemma} [Estimate for the time derivatives] $ $\label{lemma:dtudiscrete}\\
Let (A1)--(A4) hold. 
Then for all $0 \le n \le N-1$, there holds
\begin{align*}
\|\dt u(t^n) - \dtau u_h^n\|_{L^2(\Omega)} \le (h+\tau) C_1(u).
\end{align*}
\end{lemma}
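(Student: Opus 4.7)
The plan is to mirror the semi-discrete argument in Lemma~\ref{lemma:dtu} after accounting for the time discretization. Setting $[\dtau u](t^n) := (u(t^{n+1/2}) - u(t^{n-1/2}))/\tau$ and $[\dtau u_h^*](t^n) := (u_h^*(t^{n+1/2}) - u_h^*(t^{n-1/2}))/\tau$, I would split
\[
\dt u(t^n) - \dtau u_h^n = \bigl(\dt u(t^n) - [\dtau u](t^n)\bigr) + \bigl([\dtau u](t^n) - [\dtau u_h^*](t^n)\bigr) + \bigl([\dtau u_h^*](t^n) - \dtau u_h^n\bigr),
\]
and call the three summands (I), (II), and (III) respectively.

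Term (I) is a pure time-consistency error of a centered difference around $t^n$, so a Taylor expansion with integral remainder yields $\|(\mathrm{I})\|_{L^2(\Omega)} \le C\tau\, \|\dtt u\|_{L^\infty(0,T;L^2(\Omega))} \le \tau\, C_1(u)$, where $\dtt u \in L^\infty(0,T;L^2(\Omega))$ is available through the embedding $W^{1,1}(0,T;X) \hookrightarrow L^\infty(0,T;X)$ applied to $\dt u$. Term (II) I would rewrite as a time average, $(\mathrm{II}) = \tau^{-1}\int_{t^{n-1/2}}^{t^{n+1/2}}(\dt u(s) - \dt u_h^*(s))\,ds$, and bound pointwise by $\|\dt u - \dt u_h^*\|_{L^\infty(0,T;L^2(\Omega))}$, which Lemma~\ref{lemma:approximations} (with $k=1$, $p=\infty$) controls by $h\, C_1(u)$.

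Term (III) carries the substantive work and is where the argument must mesh the mass-lumping perturbation with the CFL condition. Using the notation $a_h^{n-1/2} = u_h^{n-1/2} - u_h^*(t^{n-1/2})$ from Lemma~\ref{lemma:fulldiscrete}, one has $\dtau a_h^n = \dtau u_h^n - [\dtau u_h^*](t^n)$. Testing the error equation~\eqref{eq:id1} with $v_h = \dtau a_h^n \in V_h$ and applying the norm equivalence~\eqref{eq:norm}, the Cauchy--Schwarz inequality, and the inverse estimate~\eqref{eq:inverse} to eliminate the divergence gives
\[
\|\dtau a_h^n\|_{L^2(\Omega)} \le \|\dtau a_h^n\|_{h,\Omega} \le c\, h^{-1}\, \|b_h^n\|_{L^2(\Omega)}.
\]
Theorem~\ref{theorem:fulldiscreteerror} supplies $\|b_h^n\|_{L^2(\Omega)} \le (h^2 + \tau^2)\, C_1(u)$, and the CFL restriction (A4) converts the inverse-inequality loss into $h^{-1}(h^2 + \tau^2) \le C(h + \tau)$, producing $\|(\mathrm{III})\|_{L^2(\Omega)} \le (h + \tau)\, C_1(u)$. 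Adding the three bounds completes the proof. The main obstacle is precisely this balancing in (III): both the super-convergence $b_h^n = O(h^2 + \tau^2)$ of Theorem~\ref{theorem:fulldiscreteerror} and the CFL condition are essential, since without the improved bound on $b_h^n$ or without $\tau \lesssim h$ one would recover only $O(1)$ after inverting the divergence.
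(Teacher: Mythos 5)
Your argument is correct and, at its core, the same as the paper's: the paper uses the two-term split $\dt u(t^n)-\dt u_h^*(t^n)$ plus $\dt u_h^*(t^n)-\dtau u_h^n$ and bounds the second piece by exactly the mechanism of your term (III) --- test the error relation with the difference quotient, trade the divergence for $h^{-1}$ via the inverse inequality, invoke the $O(h^2+\tau^2)$ pressure bound of Theorem~\ref{theorem:fulldiscreteerror}, and absorb $\tau^2/h$ with (A4). Your extra terms (I) and (II) only serve to replace $\dt u_h^*(t^n)$ by its difference quotient, which is harmless for $n\ge 1$; for $n=0$, however, your decomposition involves $u(t^{-1/2})$ and $u_h^*(t^{-1/2})$, which are not defined, and the quantity $a_h^{-1/2}$ of Lemma~\ref{lemma:fulldiscrete} is \emph{not} $u_h^{-1/2}-u_h^*(t^{-1/2})$, so \eqref{eq:id1} at $n=0$ does not control your term (III) as written. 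This edge case is easily repaired by treating $n=0$ with the paper's direct split, i.e.\ using $(\dt u_h^*(0)-\dtau u_h^0,v_h)_{h,\Omega}=(p_h^*(0)-p_h^0,\div\,v_h)_\Omega$ together with $\|p_h^*(0)-\pi_h^0 p(0)\|_{L^2(\Omega)}\le h^2 C_1(u)$ from Lemma~\ref{lemma:approximations}.
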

\begin{proof} 
We start with splitting the error in the time derivative by 
\begin{align*}
\|\dt u(t^n) - \dtau u_h^n\|_{L^2(\Omega)}
&\le \|\dt u(t^n) - \dt u_h^*(t^n)\|_{L^2(\Omega)} + \|\dt u_h^*(t^n) - \dtau u_h^n\|_{L^2(\Omega)} \\ &=(i)+(ii).
\end{align*}
The first term is covered by the estimates of Lemma~\ref{lemma:approximations}, which yield
\begin{align*}
	|(i)|\leq C h \|\dt u(t^n)\|_{H^1(\Omega)} \le h C_1(u).
\end{align*}
The norm equivalence \eqref{eq:norm}, the variational characterizations of $u_h^*(t^n)$ and $u_h^n$, and assumption (A3) now allow us to estimate the second term by
\begin{align*}
|(ii)|^2 
&\le \|\dt u_h^*(t^n) - \dtau u_h^n\|_{h,\Omega}^2 
 = (p_h^*(t^n) - p_h^n,\div\,(\dt u_h^*(t^n) - \dtau u_h^n))_\Omega \\
&\le c h^{-1} \|p_h^*(t^n) - p_h^n\|_{L^2(\Omega)}  \|\dt u_h^*(t^n) - \dtau u_h^n\|_{L^2(\Omega)} .
\end{align*}
With the estimates of the Theorem~\ref{theorem:fulldiscreteerror} and assumption (A4), 
we thus obtain
\begin{align*}
|(ii)| \le C h^{-1} \|p_h^*(t^n) - p_h^n\|_{L^2(\Omega)} \le  (h+\tau) C_1(u).
\end{align*}
A combination of the two results yields the assertion.
\end{proof}
Together with the estimates of Theorem~\ref{theorem:fulldiscreteerror}, 
we can now establish the following bound.
\begin{theorem}[Estimate for the improved pressure] \label{theorem:pfullpost} $ $\\
Let $\widetilde p_h^{\,n}$ be defined as above and let (A1)--(A4) hold.
Then, for all $0 \le n \le N-1$, we have
\begin{align*}
\|\widetilde p_h^{\,n} - p(t^n)\|_{L^2(\Omega)} 
\le (h^2 + \tau^2) C_1(u)
\end{align*}
with constant $C_1(u)$ of the same form as stated in Theorem~\ref{theorem:discreteerror}.
\end{theorem}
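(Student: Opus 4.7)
The plan is to mirror the semi-discrete proof of Theorem~\ref{theorem:semidiscrete-pp}, replacing the time derivative $\dt u_h$ and the pointwise value $p_h(t)$ by their fully discrete analogues $\dtau u_h^n$ and $p_h^n$. Since the defining equations for $\widetilde p_h^{\,n}$ have exactly the same structure as those for $\widetilde p_h(t)$, the argument should lift verbatim provided we have the discrete counterparts of the two key ingredients used in the semi-discrete case: the projection error $\|\pi_h^0 p(t^n) - p_h^n\|_{L^2(\Omega)} \le (h^2+\tau^2)\,C_1(u)$ supplied by Theorem~\ref{theorem:fulldiscreteerror}, and the time-derivative error $\|\dt u(t^n) - \dtau u_h^n\|_{L^2(\Omega)} \le (h+\tau)\,C_1(u)$ supplied by Lemma~\ref{lemma:dtudiscrete}.

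Concretely, I would first introduce an auxiliary reconstruction $\widetilde\Pi_h p(t^n) \in \P_1(\Th)$ defined element-wise by exactly the same local system as $\widetilde p_h^{\,n}$ but with $\dt u(t^n)$ and $\pi_h^0 p(t^n)|_K$ in place of $\dtau u_h^n$ and $p_h^n|_K$. Using $\dt u + \grad p = 0$ from \eqref{eq:sysm1}, the function $\widetilde\Pi_h p(t^n)|_K$ is the unique element of $\P_1(K)$ whose mean equals $\pi_h^0 p(t^n)|_K$ and whose constant gradient equals the $K$-average of $\grad p(t^n)$. A standard Bramble--Hilbert argument, combined with the regularity bound $\|p(t^n)\|_{H^2(\Omega)} \le C\|\dt u(t^n)\|_{H^1(\Omega)}$ (again from \eqref{eq:sysm1}, using the boundary condition \eqref{eq:sysm3}), yields
\[
\|p(t^n) - \widetilde\Pi_h p(t^n)\|_{L^2(\Omega)} \le h^2\,C_1(u),
\]
which takes care of the approximation part of the error.

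For the discrete remainder $e_h^n := \widetilde\Pi_h p(t^n) - \widetilde p_h^{\,n}$, I observe that on every element $K$ it solves the local post-processing system with data $\bigl(\dt u(t^n) - \dtau u_h^n,\ \pi_h^0 p(t^n) - p_h^n\bigr)$. Testing the gradient equation with $\widetilde q_h = e_h^n|_K \in \P_1(K)$ immediately gives $\|\grad e_h^n\|_{L^2(K)} \le \|\dt u(t^n) - \dtau u_h^n\|_{L^2(K)}$, while the mean of $e_h^n$ on $K$ coincides with $(\pi_h^0 p(t^n) - p_h^n)|_K$. A local Poincar\'e--Wirtinger inequality $\|\varphi\|_{L^2(K)} \le C\bigl(\|\pi_h^0 \varphi\|_{L^2(K)} + h_K \|\grad \varphi\|_{L^2(K)}\bigr)$, applied with $\varphi = e_h^n$ and summed over elements, therefore yields
\[
\|e_h^n\|_{L^2(\Omega)} \le C\bigl(\|\pi_h^0 p(t^n) - p_h^n\|_{L^2(\Omega)} + h\,\|\dt u(t^n) - \dtau u_h^n\|_{L^2(\Omega)}\bigr) \le \bigl((h^2+\tau^2) + h(h+\tau)\bigr) C_1(u).
\]
A triangle inequality combining this with the approximation estimate then closes the proof.

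I do not foresee a genuine obstacle, since every ingredient has already been prepared in Section~\ref{sec:time} and in Lemma~\ref{lemma:dtudiscrete}. The single quantitative point to watch is the cross term $h(h+\tau) = h^2 + h\tau$ coming from multiplying the gradient prefactor $h$ by the first-order time-derivative rate; this must be dominated by the target rate $h^2+\tau^2$, which is immediate from $2h\tau \le h^2+\tau^2$ (or equivalently from the CFL condition (A4), which yields $h\tau \le h^2/c$ directly). This is the one place where the mesh/time-step compatibility imposed by (A4) is visibly used in the post-processing error analysis.
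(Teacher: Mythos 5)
Your proposal is correct and follows essentially the same route as the paper: the paper's proof is the standard Stenberg-type post-processing argument (splitting into an $O(h^2)$ local approximation part, the piecewise-constant mean error controlled by Theorem~\ref{theorem:fulldiscreteerror}, and a gradient part controlled by $h\,\|\dt u(t^n)-\dtau u_h^n\|$ via Lemma~\ref{lemma:dtudiscrete}), with the cross term $h\tau$ absorbed into $h^2+\tau^2$ exactly as you note. Your choice of intermediate reconstruction (local averaged gradient plus exact mean) is only a cosmetic variant of the paper's use of $\pi_h^1 p(t^n)$ and changes nothing substantive.
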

\begin{proof}
We can proceed in the same manner as on the semi-discrete level; here, Lemma~\ref{lemma:dtudiscrete} gives the estimate for the error of the discrete time derivative of $u$; see also \cite{EggerRadu16}.
\end{proof}

\subsection{Post-processing for the velocity}

Let us now turn to the post-processing of the velocity, for which we again employ a similar construction as on the semi-discrete level.
\begin{problem}[Post-processing for the velocity] $ $\\
For all $0 < n \le N$ find $\widetilde u_h^{\,n} \in V_h$ and $\widetilde r_h^{\,n} \in Q_h$ such that
\begin{alignat*}{2}
(\widetilde u_h^{\,n}, v_h)_\Omega - (\widetilde r_h^{\,n}, \div\,v_h)_\Omega &= (\widehat u_h^{\,n}, v_h)_{h,\Omega}
\qquad && \forall v_h \in V_h, \\
(\div\,\widetilde u_h^{\,n}, q_h)_\Omega &= (\div\,\widehat u_h^{\,n}, q_h)_\Omega \qquad && \forall q_h \in Q_h.
\end{alignat*}
\end{problem}
Recall that $\widehat u_h^{\,n} = \frac{1}{2}(u_h^{n+1/2}+u_h^{n-1/2})$ denotes the average of the discrete solution at two consecutive time points and therefore, $\widetilde u_h^{\,n}$ is actually an approximation for $\widehat u(t^n)$. 
Similar to the semi-discrete level, the post-processing scheme can be computed locally in time, but requires the solution of a global system in space.
We can now establish the following estimate for the improved velocity approximation. 
\begin{theorem}[Estimate for the improved velocity] $ $\label{theorem:ufullpost}\\
Let (A1)--(A4) hold. 
Then for $0 \le n \le N$, there holds
\begin{align*}
\|\widetilde u_h^{\,n} - u(t^n)\|_{L^2(\Omega)} 
\le \tau^2 (C_1(u) + C_3(u)) + h^2 (C_1(u) + C_2(u)),
\end{align*}
with $C_1(u)$, $C_2(u)$ as defined in Theorems~\ref{theorem:discreteerror} and \ref{theorem:postu}, and $C_3(u)=C \|\dtt u\|_{L^\infty(0,T;H(\div;\Omega))}$.
\end{theorem}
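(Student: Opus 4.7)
The strategy is to adapt the semi-discrete proof of Theorem~\ref{theorem:postu} to the fully-discrete setting. Since $\widetilde u_h^{\,n}$ is built from the time-average $\widehat u_h^{\,n}$, it is natural to compare it with $\widehat u(t^n)$ and to introduce the time-averaged semi-discrete auxiliary $\widehat{\widetilde u_h^{\,*}}(t^n)$. A triangle inequality gives
\begin{align*}
\|u(t^n) - \widetilde u_h^{\,n}\|_{L^2(\Omega)}
\le \|u(t^n) - \widehat u(t^n)\|_{L^2(\Omega)}
  + \|\widehat u(t^n) - \widehat{\widetilde u_h^{\,*}}(t^n)\|_{L^2(\Omega)}
  + \|\widehat{\widetilde u_h^{\,*}}(t^n) - \widetilde u_h^{\,n}\|_{L^2(\Omega)}.
\end{align*}
The first term is a pure Taylor remainder bounded by $C\tau^2\|\dtt u\|_{L^\infty(0,T;L^2(\Omega))}\le\tau^2 C_3(u)$. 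The second term is a time-average of the semi-discrete approximation error and is therefore controlled by $\|u-\widetilde u_h^{\,*}\|_{L^\infty(0,T;L^2(\Omega))}\le h^2 C_2(u)$ via Lemma~\ref{lemma:approximations2}.

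To bound the third term I would replay the divergence-free test-function trick from Theorem~\ref{theorem:postu}. The key algebraic identity to establish is
\begin{align*}
(\widehat{\widetilde u_h^{\,*}}(t^n) - \widetilde u_h^{\,n}, v_h)_\Omega = 0 \qquad \text{for every } v_h \in V_h \text{ with } \div v_h = 0.
\end{align*}
Subtracting the defining equations of $\widetilde u_h^{\,n}$ and $\widehat{\widetilde u_h^{\,*}}(t^n)$ and dropping the divergence terms on div-free test functions, this reduces to showing $(\widehat u_h^{\,n}, v_h)_{h,\Omega} = (\widehat u(t^n), v_h)_\Omega$ whenever $\div v_h = 0$. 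The right-hand side is $(u(0), v_h)_\Omega$ for every $t^n$, since $\dt u + \grad p = 0$ combined with integration by parts gives $(\dt u, v_h)_\Omega = (p, \div v_h)_\Omega = 0$ on div-free elements, and time-averaging preserves this equality. For the left-hand side I would prove by induction on $m$ that $(u_h^{m-1/2}, v_h)_{h,\Omega} = (u(0), v_h)_\Omega$ for all $m \ge 0$: the base case follows from the initialization \eqref{eq:sysvdf1} together with the identity $(u_h^{\,*}(0), v_h)_{h,\Omega} = (u(0), v_h)_\Omega$ (for $\div v_h = 0$) coming from Problem~\ref{problem:auxfunctions}, while the induction step is immediate from the leapfrog recursion \eqref{eq:sysvdf2}.

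To extract an $L^2$ bound from the vanishing identity, I would use that the element $\widehat{\widetilde u_h^{\,*}}(t^n) - \widetilde u_h^{\,n} + \big(\widehat u_h^{\,n} - \widehat u_h^{\,*}(t^n)\big)$ is itself divergence-free, because $\div \widehat{\widetilde u_h^{\,*}}(t^n) = \pi_h^0 \widehat{\div u}(t^n) = \div \widehat u_h^{\,*}(t^n)$ (by Lemmas~\ref{lemma:approximations} and \ref{lemma:approximations2}) and $\div \widetilde u_h^{\,n} = \div \widehat u_h^{\,n}$ by construction. Testing the identity with this particular element and applying the Cauchy-Schwarz inequality together with Theorem~\ref{theorem:fulldiscreteerror} yields
\begin{align*}
\|\widehat{\widetilde u_h^{\,*}}(t^n) - \widetilde u_h^{\,n}\|_{L^2(\Omega)}
\le \|\widehat u_h^{\,n} - \widehat u_h^{\,*}(t^n)\|_{L^2(\Omega)}
\le (h^2 + \tau^2)\,C_1(u),
\end{align*}
and assembling the three contributions proves the claim. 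The main obstacle is the propagation of the divergence-free-test-function identity through the leapfrog recursion: the initialization \eqref{eq:sysvdf1} has been engineered precisely so that this induction closes, and one must simultaneously verify that all four divergences (of $\widehat{\widetilde u_h^{\,*}}(t^n)$, $\widetilde u_h^{\,n}$, $\widehat u_h^{\,*}(t^n)$ and $\widehat u_h^{\,n}$) line up so that a useful div-free test element is available. Once this bookkeeping is done, the rest of the proof is a triangle inequality.
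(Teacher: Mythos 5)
Your proposal is correct and follows essentially the same route as the paper: the divergence-free test-function identity (enabled by the initialization \eqref{eq:sysvdf1} and the leapfrog recursion), the specially constructed divergence-free test element, and the appeal to Theorem~\ref{theorem:fulldiscreteerror} are all exactly the paper's mechanism, with only cosmetic rearrangement --- you extract the $\tau^2 C_3(u)$ Taylor term from $u(t^n)-\widehat u(t^n)$ at the continuous level and phrase the orthogonality identity as a conserved quantity propagated by induction, whereas the paper compares $u_h^*(t^n)$ with $\widehat u_h^{\,*}(t^n)$ (invoking stability of the inexact elliptic projection in $H(\div;\Omega)$) and derives the identity by telescoping. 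Your variant is marginally cleaner at that one step, but the argument is the same.
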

\begin{proof}
We proceed with similar arguments as on the semi-discrete level. 
As a first step, we use a splitting of the error 
\begin{align*}
\|\widetilde u_h^{\,n} - u(t^n)\|_{L^2(\Omega)} 
 \le \|\widetilde u_h^{\,*}(t^n) - u(t^n)\|_{L^2(\Omega)} +  \|\widetilde u_h^{\,n} - \widetilde u_h^{\,*}(t^n)\|_{L^2(\Omega)} 
 = (i) + (ii)
\end{align*}
into an approximation error and a discrete error component. 
The first part can be readily estimated by Lemma~\ref{lemma:approximations2}, which yields
\begin{align*}
(i) = \|\widetilde u_h^{\,*}(t^n) - u(t^n)\|_{L^2(\Omega)}\le h^2 C_2(u).
\end{align*} 
Now observe that, by the variational characterizations of $\widetilde u_h^{\,n}$ and $\widetilde u_h^{\,*}(t^n)$, we have
\begin{align*}
(\widetilde u_h^{\,n}  -\widetilde u_h^{\,*}(t^n), v_h)_\Omega 
&= (\widehat u_h^{\,n},v_h)_{h,\Omega} + (\widetilde r_h^{\,n} , \div\,v_h)_\Omega - (\widetilde u_h^{\,*}(t^n), v_h)_\Omega \\
&= (\widehat u_h^{\,0},v_h)_{h,\Omega} + \sum_{k=1}^n \frac{\tau}{2}(\dtau u_h^{k} + \dtau u_h^{k-1}, v_h)_{h,\Omega} + 
(\widetilde r_h^{\,n}, \div\,v_h)_\Omega \\
& \qquad \qquad \qquad   - (\widetilde u_h^{\,*}(0), v_h)_\Omega - \int_0^{t^n} (\dt \widetilde u_h^{\,*}(s), v_h)_\Omega\, ds \\
&= (\widehat u_h^{\,0} - u_h^*(0), v_h)_{h,\Omega} + (\widetilde r_h^{\,n} + r_h^*(0) - \widetilde r_h^{\,*}(0), \div\,v_h)_\Omega \\
& \qquad \qquad \qquad + \frac{\tau}{2}\sum_{k=1}^n(p_h^k + p_h^{k-1}, \div\,v_h)_\Omega - 
\int_0^{t^n} (p_h^*(s), \div\,v_h)_\Omega\, ds.
\end{align*}
If $\div\,v_h=0$, then only the first term on the right hand side of this identity remains. 
From Problem~\ref{problem:fulldiscrete}, we further get
\begin{align*}
(u_h^{1/2},v_h)_{h,\Omega} = (u_h^{-1/2},v_h)_{h,\Omega} + \tau (p_h^0,\div\,v_h)_\Omega.
\end{align*}
Using the definition of $\widehat u_h^{\,0}$ and of $u_h^{-1/2}$ in \eqref{eq:sysvdf1}, 
we further compute
\begin{align*}
(\widehat u_h^{\,0}-u_h^*(0),v_h)_{h,\Omega} &= \frac{1}{2} (u_h^{1/2} + u_h^{-1/2},v_h)_{h,\Omega}- (u_h^*(0),v_h)_{h,\Omega} \\
&= (u_h^{-1/2},v_h)_{h,\Omega} + \frac{\tau}{2}(p_h^0,\div\,v_h)_\Omega - (u_h^0,v_h)_{h,\Omega}=0.
\end{align*}
As a consequence of these observations, we can see that 
\begin{align} \label{eq:aux}
(\widetilde u_h^{\,n}  -\widetilde u_h^{\,*}(t^n), v_h)_\Omega = 0 \qquad \text{if } \div \, v_h = 0. 
\end{align}
With similar arguments as on the semi-discrete level, 
we can now estimate the second term in the error splitting at the beginning of the proof by
\begin{align*}
|(ii)|^2 
&= \|\widetilde u_h^{\,n}  -\widetilde u_h^{\,*}(t^n)\|^2_{L^2(\Omega)} \\
&= (\widetilde u_h^{\,n}  -\widetilde u_h^{\,*}(t^n),\widetilde u_h^{\,n}  - 
\widetilde u_h^{\,*}(t^n) + u_h^*(t^n) - \widehat u_h^{\,n})_\Omega
 - (\widetilde u_h^{\,n}  -\widetilde u_h^{\,*}(t^n),u_h^*(t^n) - \widehat u_h^{\,n})_\Omega \\
&\le \|\widetilde u_h^{\,n}  -\widetilde u_h^{\,*}(t^n)\|_{L^2(\Omega)} \|u_h^*(t^n)-\widehat u_h^{\,n}\|_{L^2(\Omega)}.
\end{align*}
For the last step, we used that, by construction of the approximations, there holds
\begin{align*}
\div \, u_h^*(t^n) = \pi_h^0 \div\,u(t^n) = \div \, \widetilde u_h^{\,*}(t^n) 
\quad \text{and} \quad   
\div \, \widetilde u_h^{\,n} = \div\, \widehat u_h^{\,n}.
\end{align*}
Since the divergence of the test function is zero, we deduce from \eqref{eq:aux} that the first term in the second line vanishes.
We can thus proceed by 
\begin{align*}
|(ii)|&\le \|u_h^*(t^n)-\widehat u_h^{\,n}\|_{L^2(\Omega)} 
\le \|\widehat u_h^{\,*}(t^n) - \widehat u_h^{\,n}\|_{L^2(\Omega)} 
+ \|u_h^*(t^n) - \widehat u_h^{\,*}(t^n)\|_{L^2(\Omega)} 
\\
&= (iii) + (iv).
\end{align*}
The term (iii) in this estimate is covered by Theorem~\ref{theorem:fulldiscreteerror} and the remaining term (iv) can be bounded by 
\begin{align*}
|(iv)| 
\le \tau^2 \|\dtt u_h^*\|_{L^\infty(0,T;L^2(\Omega))} 
\le \tau^2 \|\dtt u\|_{L^\infty(0,T;H(\div;\Omega))}.
\end{align*}
In the last step, we used the stability of the inexact elliptic projection
in $H(\div;\Omega)$. The assertion of the theorem now follows by combination of all intermediate estimates.
\end{proof}

\section{Numerical tests} \label{sec:num}

We now illustrate our theoretical results by some numerical tests similar to those reported in \cite{LeeMadsen90,Monk92a}. 
It can be easily verified that for any wave vector $k=(k_1,k_2) \in \RR^2$ with $|k|=1$ and a function $g\in C^\infty(\RR)$, the plane wave functions given by
\begin{align}
u_{pw}(x,y,t) &= \binom{k_1}{k_2}  g(k_1x+k_2y-t), \label{eq:uex}\\
p_{pw}(x,y,t) &= g(k_1x+k_2y-t),  \label{eq:pex}
\end{align}
satisfy the first order wave equation \eqref{eq:sysm1}-\eqref{eq:sysm2} for all $(x,y) \in \RR^2$ and $t \in \RR$. 
Moreover, the solution $(u,p)$ is infinitely differentiable. 
For our numerical tests, we choose some domain $\Omega \subset \RR^2$ and use the explicit formulas for the pressure and the velocity to define Dirichlet boundary values for the pressure at $\partial\Omega_D$ and initial values for $u$ and $p$ at time $t=0$. The corresponding fully discrete finite element scheme with mass-lumping and leapfrog time stepping then reads 
\begin{alignat*}{2}
(\dtau u_h^{n},v_h)_{h,\Omega} - (p_h^{n}, \div \, v_h)_\Omega 
     &= -(p_{pw}(t^n),n\cdot v_h)_{\partial\Omega_D} \qquad &&\forall v_h \in V_h, \\
(\dtau p_h^{n+1/2},q_h)_\Omega + (\div \, u_h^{n+1/2},q_h)_\Omega 
     &= 0 \qquad &&\forall q_h \in Q_h.
\end{alignat*}
The inhomogeneous right hand side only requires some minor modifications in the definition of the auxiliary functions introduced in Problem~\ref{problem:auxfunctions} and Problem~\ref{problem:auxfunctions2}, 
as well as in the definition of the initial value $u_h^{-1/2}$ for the fully discrete scheme.

\subsection{Plane wave propagation}

We choose $g(x) = e^{-2(x+5)^2}$ and $k=\frac{1}{\sqrt{5}}(2,1)$ in the formulas \eqref{eq:uex}--\eqref{eq:pex} 
to define the analytical solution and consider acoustic wave propagation in the domain $\Omega = (-1,1)^2$ and
for the time interval $0 \le t \le T=5$. 
The initial values for $u$ and $p$ and the boundary values for $p$ are defined by the analytical formulas for the exact solution. 
Since the solution is infinitely differentiable here, we expect to observe the optimal convergence rates predicted by our theory. 
We use $\tnorm e\tnorm=\max_{0 \le t^n \le T} \|e(t^n)\|_{L^2(\Omega)}$ to denote the norm of the error. 
For our convergence study, we consider a sequence $\{\T_h\}_{h}$ of quasi-uniform but non-nested meshes $\T_h$ with decreasing mesh size $h=2^{-k}$, $k=1,2,\ldots$. 
In order to satisfy the CFL condition (A4), we choose $\tau = h/4$ as the time step size. 
The results of our computational tests are depicted in Table~\ref{tab:1}.

\begin{table}[ht!]
\begin{tabular}{c|c||c|c||c|c||c|c} 
$h$ & $\tau$ & $\tnorm \widehat u - \widehat u_h\tnorm$ & eoc & $\tnorm p-p_h\tnorm$ & eoc 
& $\tnorm \widehat u_h - \widehat u_h^{\,*}\tnorm$ & eoc \\
\hline
\hline
\rule{0pt}{2.1ex} 
$2^{-3}$ & $2^{-5}$ & $0.053047$ & ---    & $0.069893$ & ---    & $0.051699$ & ---    \\
$2^{-4}$ & $2^{-6}$ & $0.020622$ & $1.36$ & $0.033095$ & $1.08$ & $0.013408$ & $1.95$ \\
$2^{-5}$ & $2^{-7}$ & $0.009977$ & $1.05$ & $0.016408$ & $1.01$ & $0.003353$ & $2.00$ \\  
$2^{-6}$ & $2^{-8}$ & $0.004883$ & $1.03$ & $0.008114$ & $1.02$ & $0.000830$ & $2.01$
\end{tabular}
\medskip
\caption{Errors vs. mesh size $h$ and time step $\tau$ as well as the estimated order of convergence (eoc) for plane wave solution on a rectangular domain. \label{tab:1}} 
\end{table}

As predicted by Theorem~\ref{theorem:fulldiscreteerror2}, we obtain only first order convergence in both solution components, while the discrete error contribution for the velocity is of second order. 
In Table~\ref{tab:2}, we display the corresponding errors obtained after post-processing. 

\begin{table}[ht!]
\begin{tabular}{c|c||c|c||c|c} 
$h$ & $\tau$ & $\tnorm u - \widetilde u_h\tnorm$ & eoc & $\tnorm p - \widetilde p_h\tnorm$ & eoc \\
\hline
\hline
\rule{0pt}{2.1ex}
$2^{-3}$ & $2^{-5}$ & $0.051792$ & ---    & $0.055946$ & ---    \\
$2^{-4}$ & $2^{-6}$ & $0.013486$ & $1.94$ & $0.013180$ & $2.09$ \\
$2^{-5}$ & $2^{-7}$ & $0.003375$ & $2.00$ & $0.003220$ & $2.03$ \\
$2^{-6}$ & $2^{-8}$ & $0.000836$ & $2.01$ & $0.000791$ & $2.02$ \\  
\end{tabular}
\medskip
\caption{Errors vs. mesh size $h$ and time step $\tau$ as well as the estimated order of convergence (eoc) on a rectangular domain. \label{tab:2}} 
\end{table}

As expected from the results of Theorem~\ref{theorem:pfullpost} and Theorem~\ref{theorem:ufullpost}, 
we observe the full second order convergence for both solution components after post-processing. 
To highlight the qualitative improvement obtained by the post-processing, we display in Figure~\ref{fig:1} 
snapshots of the discrete solution components before and after post-processing. 
\begin{figure}[ht!]
\begin{center}
\includegraphics[width=.25\textwidth]{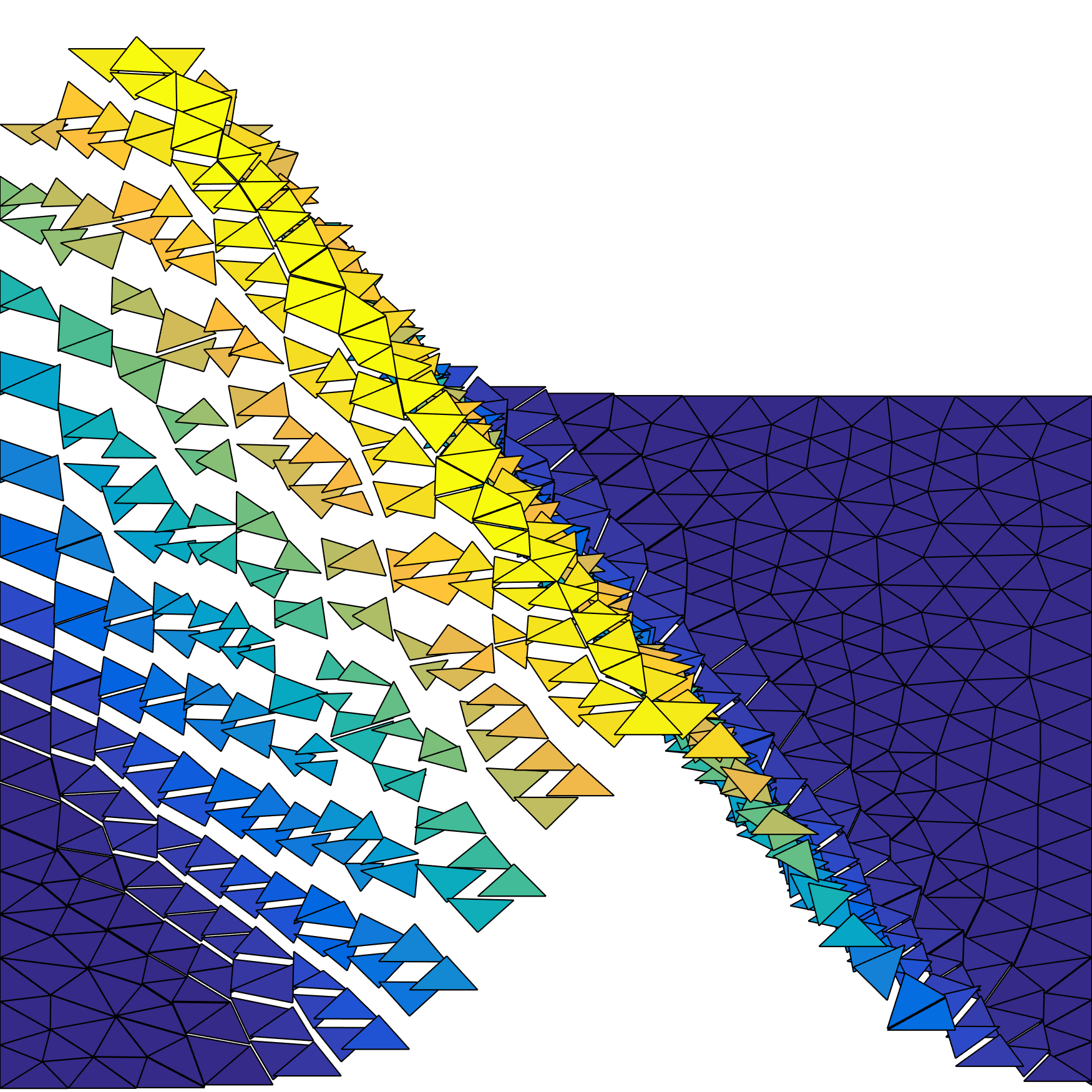}
\hskip3ex
\includegraphics[width=.25\textwidth]{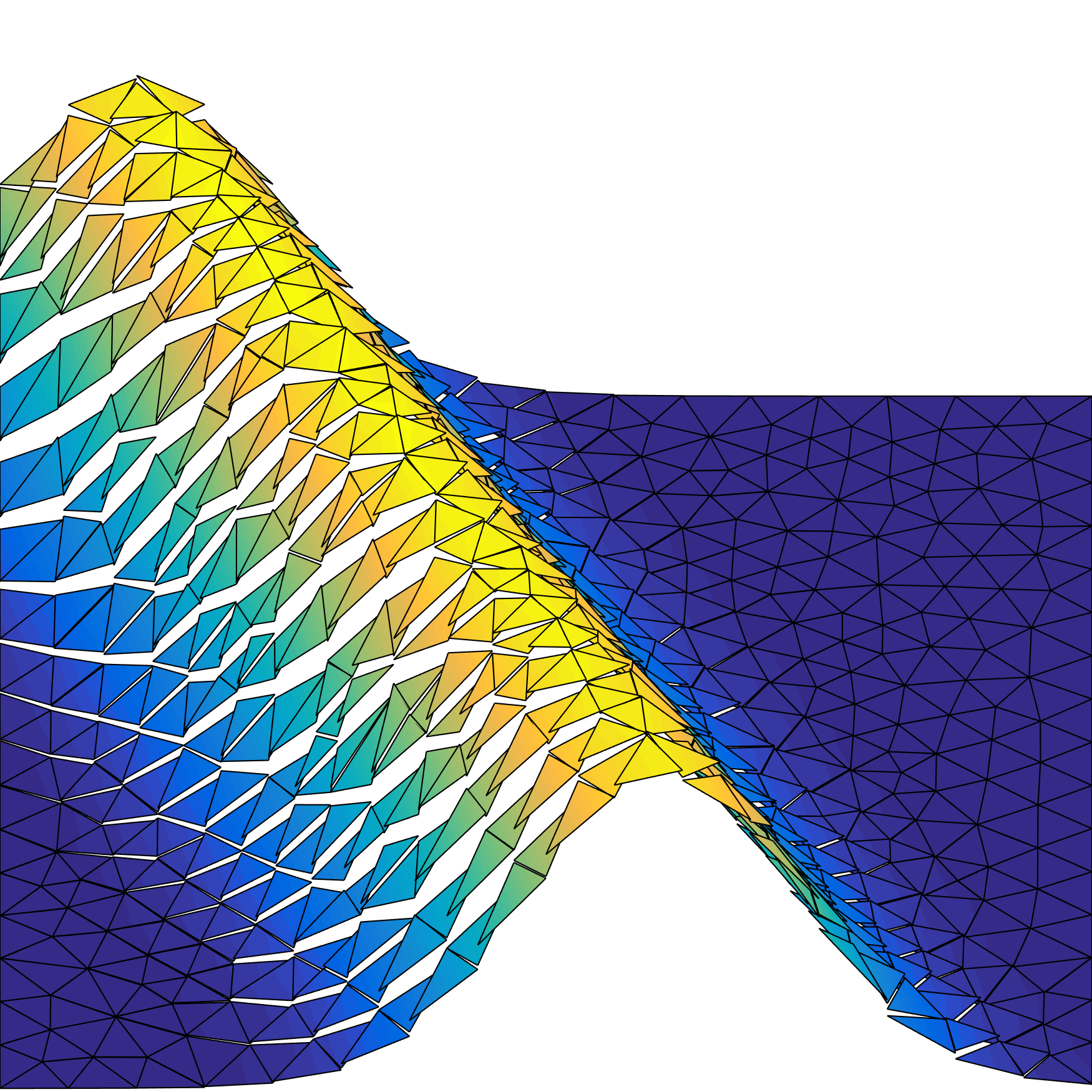}
\hskip3ex
\includegraphics[width=.25\textwidth]{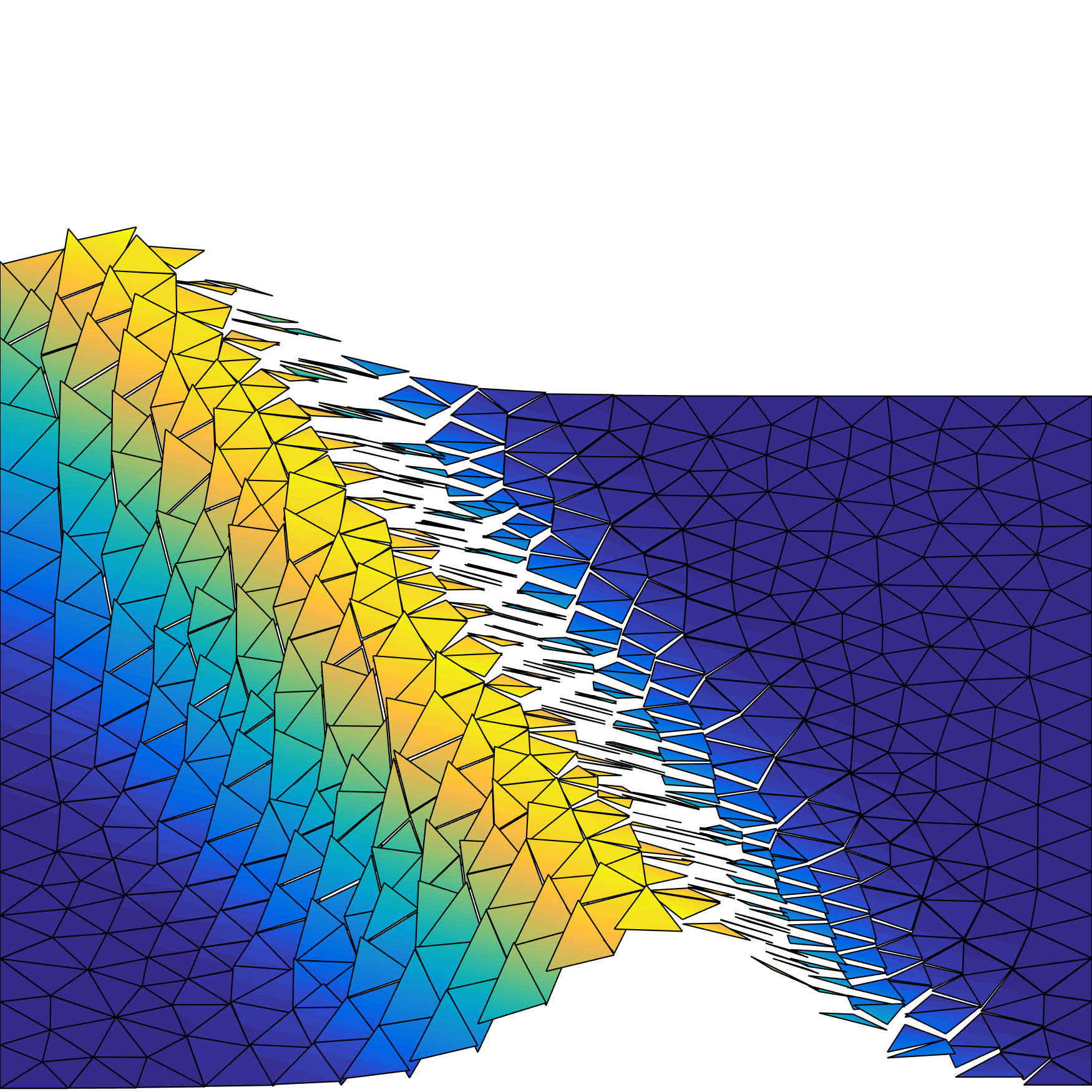} \\

\includegraphics[width=.25\textwidth]{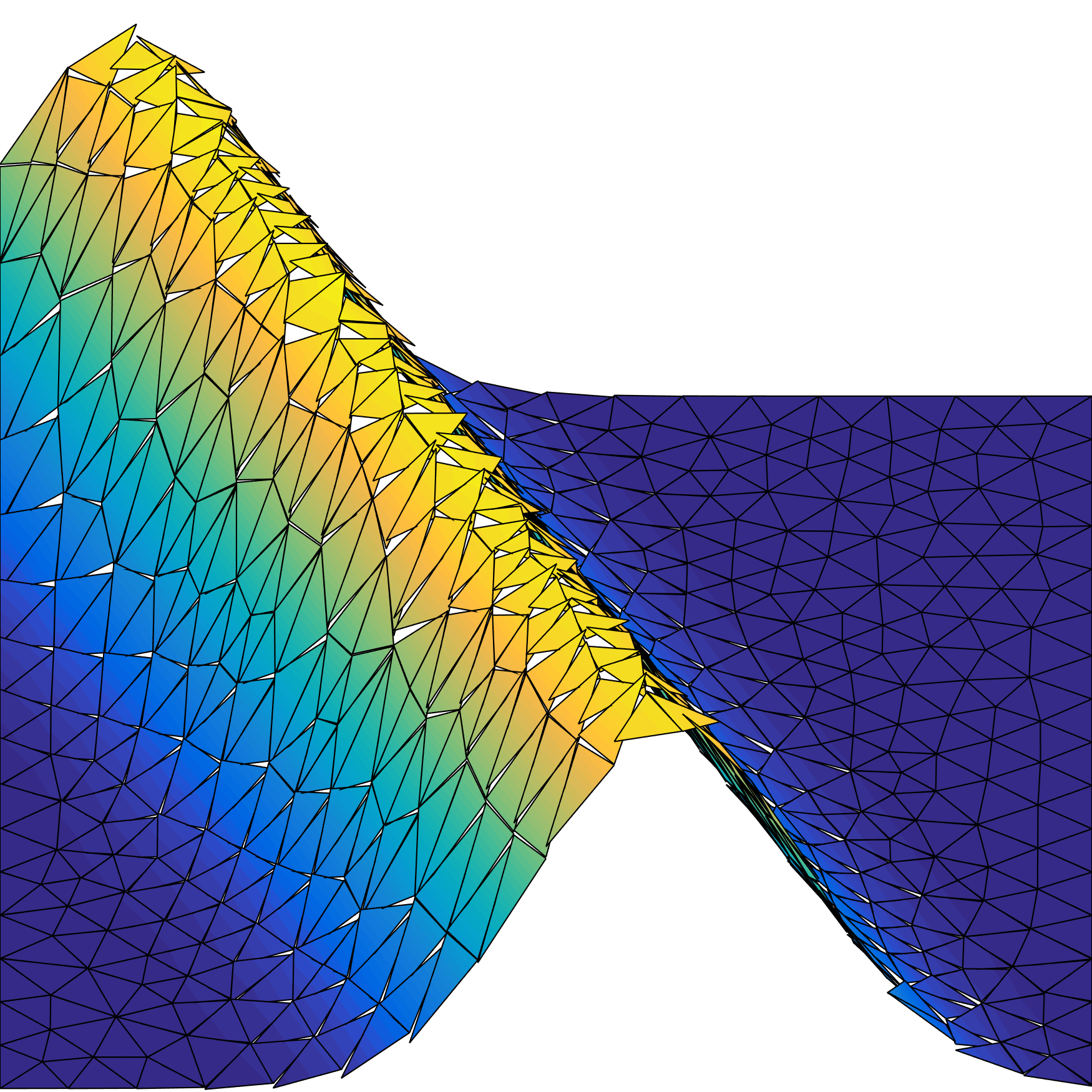}
\hskip3ex
\includegraphics[width=.25\textwidth]{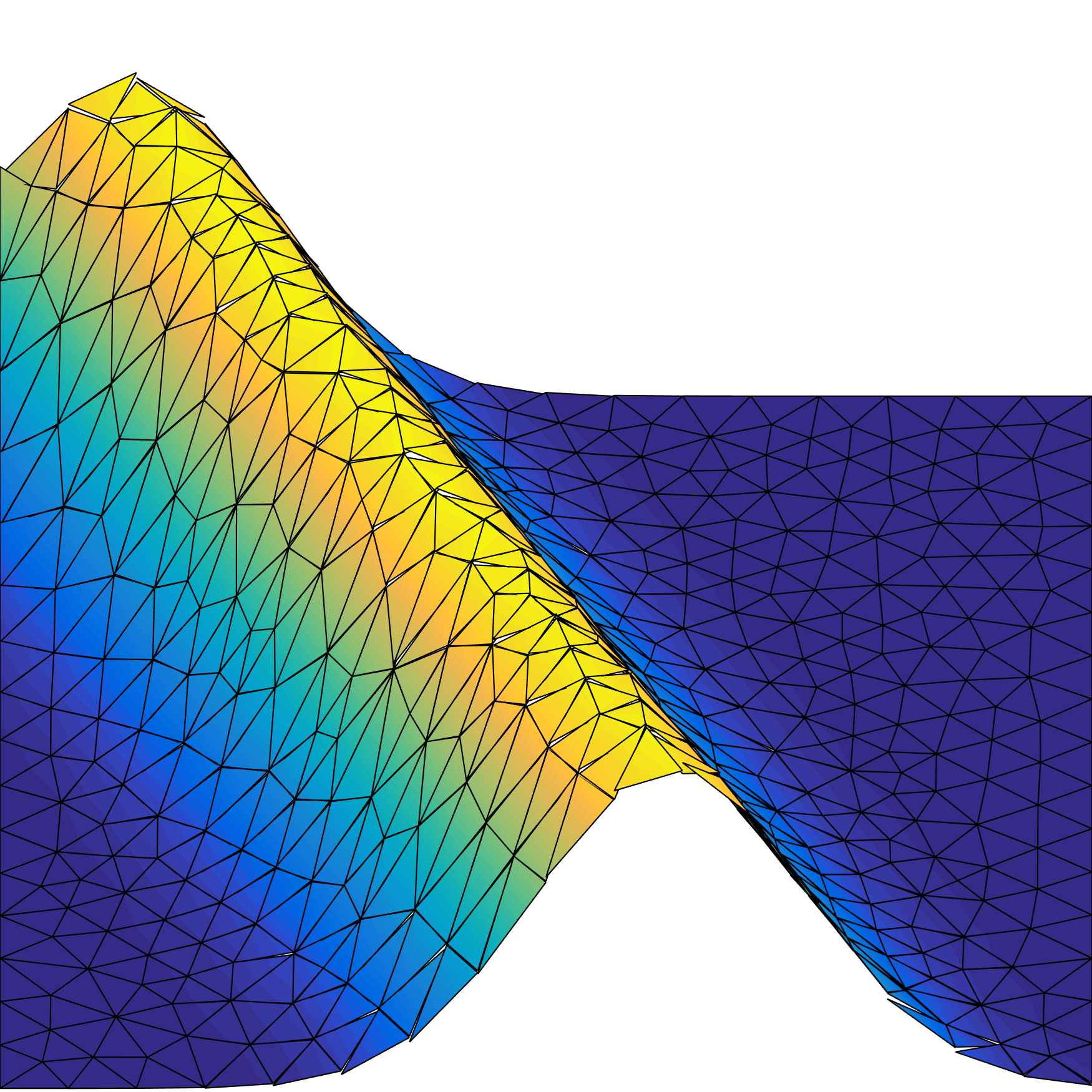}
\hskip3ex
\includegraphics[width=.25\textwidth]{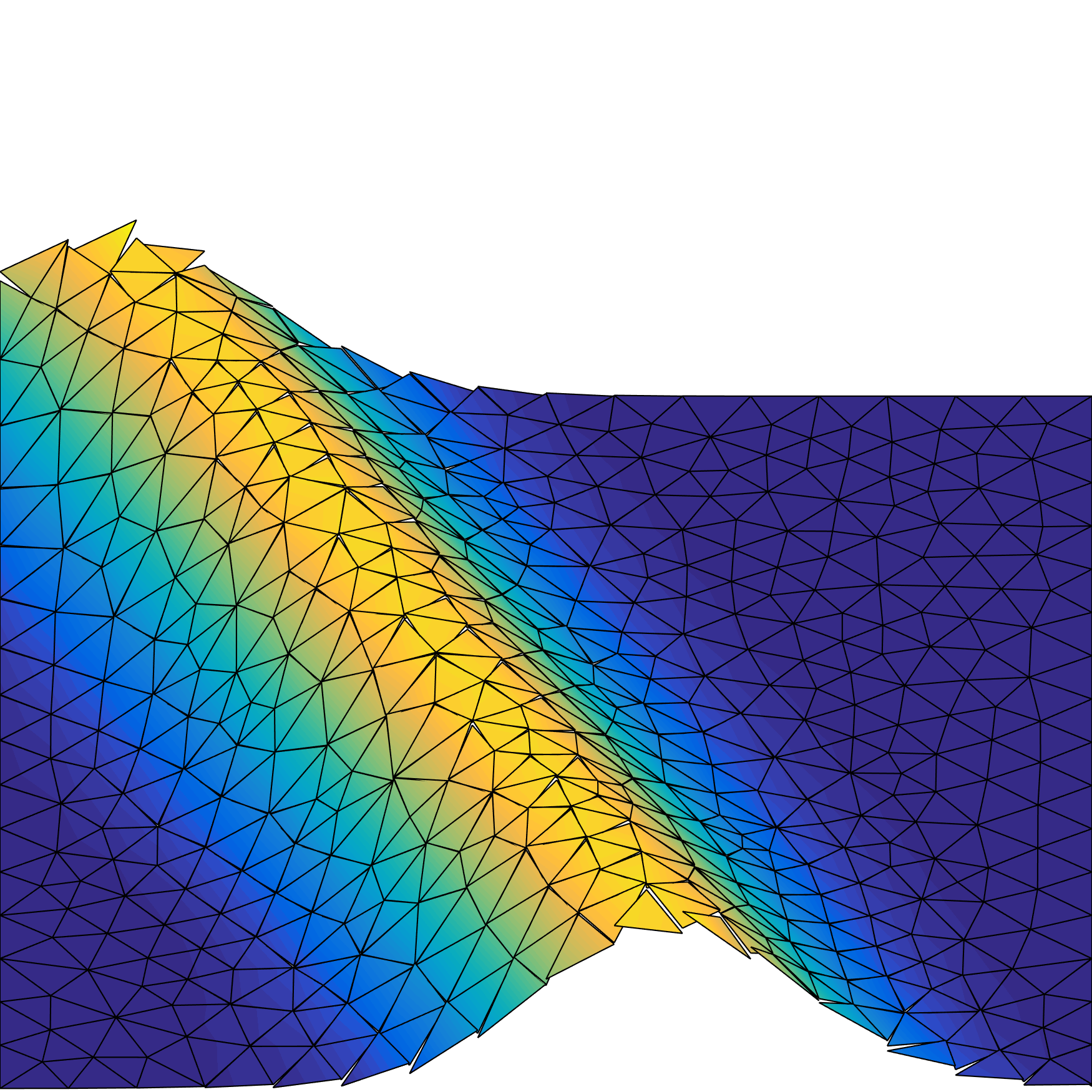}
\caption{Snapshots of the discrete approximations $p_h$, $u_{1,h}$, $u_{2,h}$ for the plane wave solution at time $t=2$ (top row) and corresponding approximations $\widetilde p_h$, $\widetilde u_{1,h}$, and $\widetilde u_{2,h}$ after post-processing (bottom row).}
\end{center}
\label{fig:1}
\end{figure}
As can be seen from the illustrations, the post-processing not only reduces the error quantitatively, but it also leads to almost continuous approximations. 

Let us remark that we observed second order convergence after post-processing also when repeating the same tests for the L-shape $\Omega=(-1,1)^2\setminus[0,1]^2$, although the convexity assumption (A2), which was needed for our analysis, was violated. 
This indicates that some of our assumptions might still be further relaxed.

\subsection{Scattering on a sphere}

As a second test case, we study the scattering of a plane wave by a cylinder.
Using symmetry in the third spatial direction, it suffices to consider again a two  dimensional 
test problem. 
The computational domain here is chosen as 
\begin{align*}
\Omega = (-1,1)^2\setminus \{(x,y)\,|\, \|(x,y+1)\|_{2} \le 0.2\};
\end{align*}
see Figure~\ref{fig:2} for a sketch. 
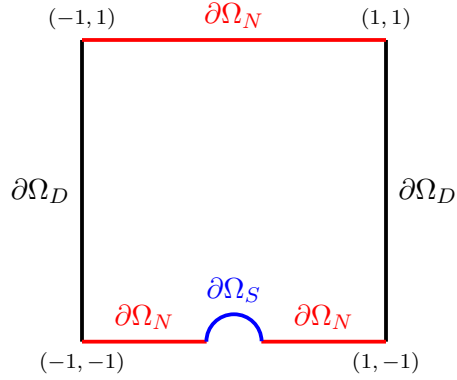
\begin{figure}[ht!]
\centering
\begin{tikzpicture}[scale=2]
\draw[black,line width=0.5mm] (-1,-1) -- (-1, 1) node[left,midway]  {\small $\partial\Omega_{D}$};
\draw[black,line width=0.5mm] ( 1,-1) -- ( 1, 1) node[right,midway] {\small $\partial\Omega_{D}$};
\draw[red, line width=0.5mm]  ( 1, 1) -- (-1, 1) node[above,midway] {\small $\partial\Omega_{N}$};
\draw[red, line width=0.5mm]  (-1,-1) -- ( -0.18,-1) node[above,midway] {\small $\partial\Omega_{N}$};
\draw[red, line width=0.5mm]  (0.18,-1) -- ( 1,-1)   node[above,midway] {\small $\partial\Omega_{N}$};
\draw[blue,line width=0.5mm,yshift=-1cm] (180:1ex) arc (180:1:1ex) node[above,midway] {\small $\partial\Omega_S$};;
\draw[] (-1,-1) node[below] {\tiny $(-1,-1)$};
\draw[] (1,-1) node[below] {\tiny $(1,-1)$};
\draw[] (-1,1) node[above] {\tiny $(-1,1)$};
\draw[] (1,1) node[above] {\tiny $(1,1)$};
\end{tikzpicture}
  \caption{Computational domain for the wave scattering problem.\label{fig:2}}
\end{figure}
We again consider the system \eqref{eq:sysm1}--\eqref{eq:sysm2} on this domain $\Omega$ for $0 \le t \le T=2$ 
with boundary conditions 
\begin{alignat*}{3}
p &= 0             && \text{ on }\partial\Omega_{S},\\
p &= p_{pw} \qquad  && \text{ on }\partial\Omega_{D},\\
n\cdot u &=0       && \text{ on }\partial\Omega_{N},
\end{alignat*}
where $p_{pw}$ is given by \eqref{eq:pex} with $g(x) = 2e^{-10(x+3)^2}$ 
and $k=(k_1,k_2)=(1,0)$. 
This test case models a plane wave that enters the computational domain from the left boundary 
and which is scattered at the circular boundary $\partial\Omega_S$. 
Some snapshots of the numerical solution after post-processing are depicted in Figure~\ref{fig:3}. 

\begin{figure}[ht!]
\captionsetup[subfloat]{width=120pt}
\begin{center}
\includegraphics[width=.25\textwidth]{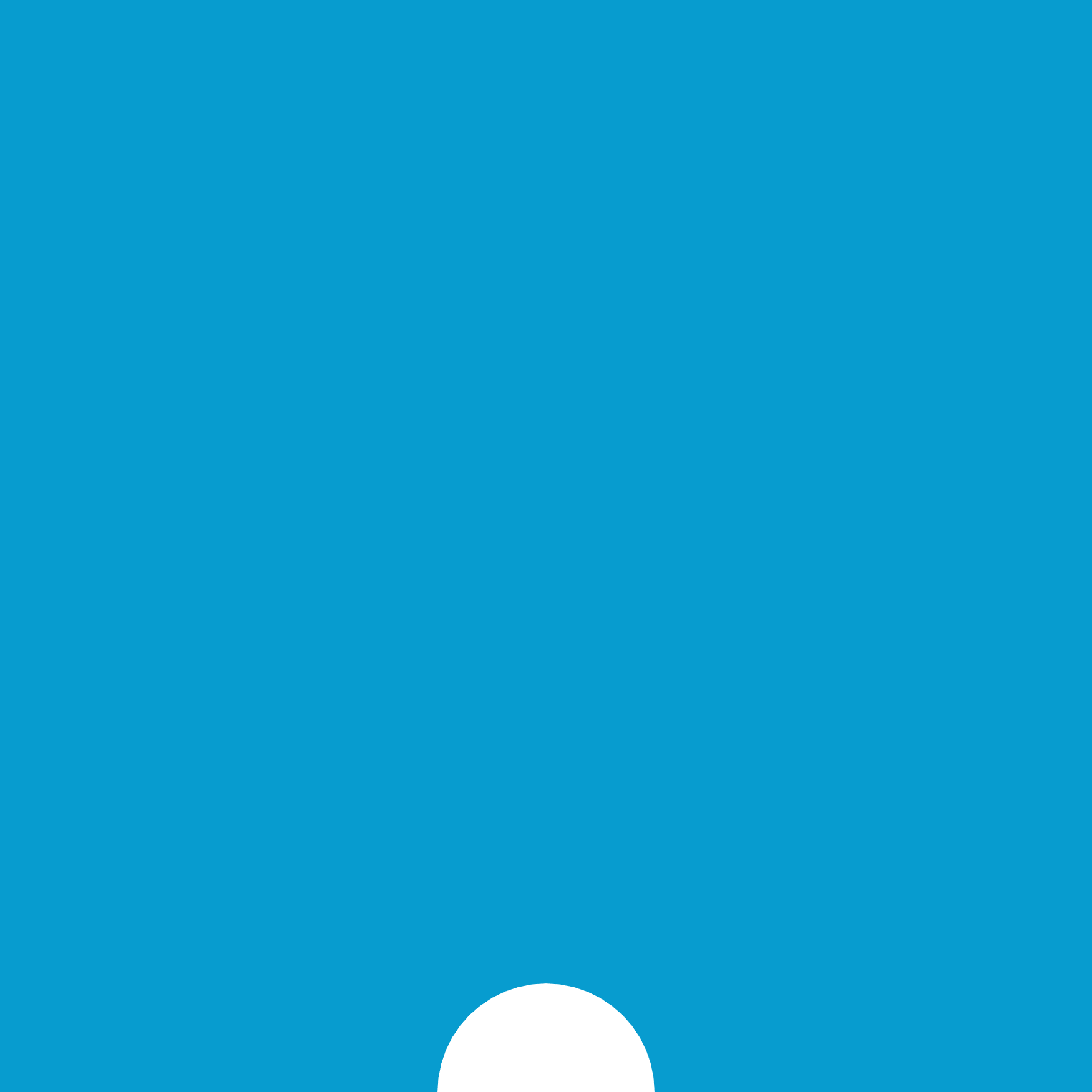}
\hskip3ex
\includegraphics[width=.25\textwidth]{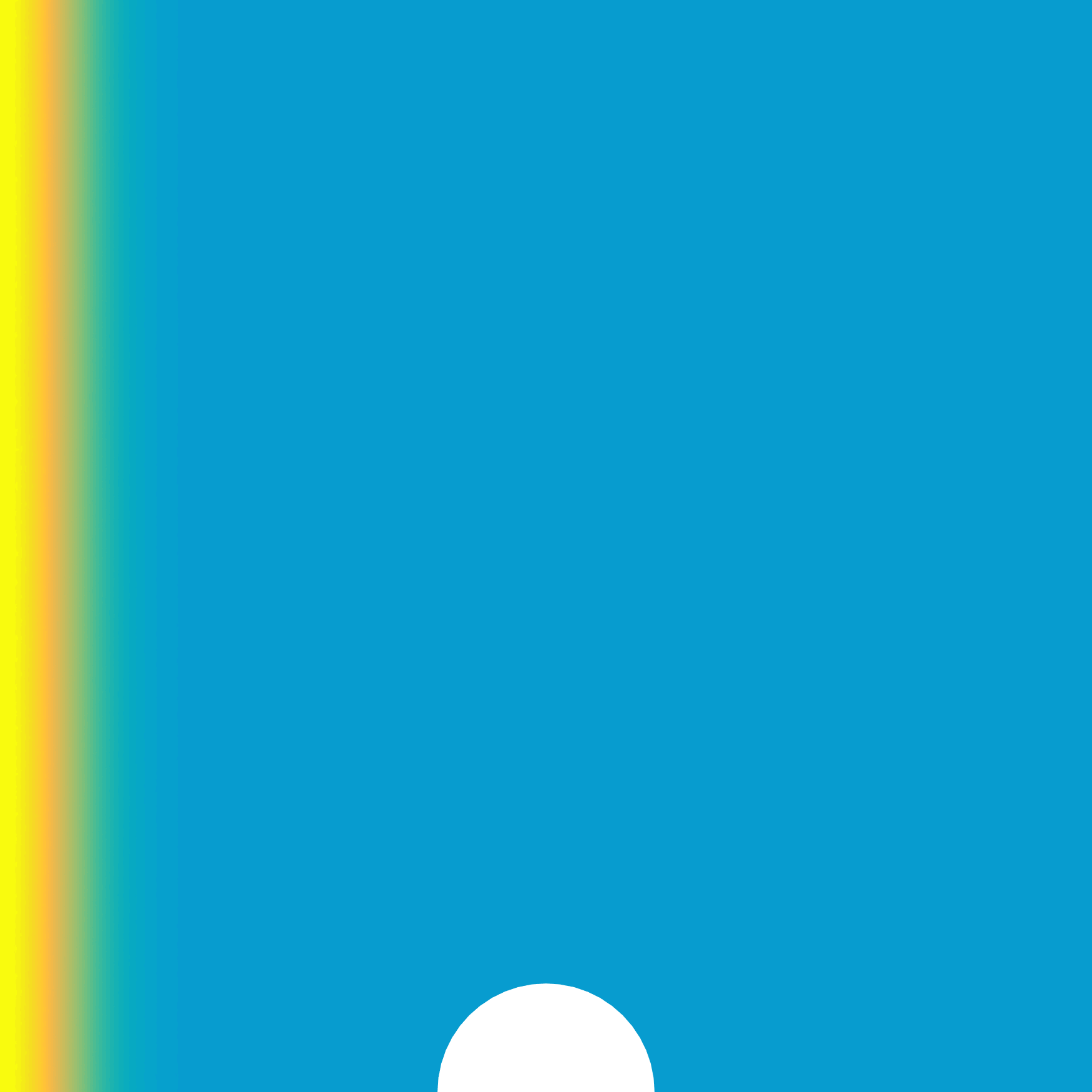}
\hskip3ex
\includegraphics[width=.25\textwidth]{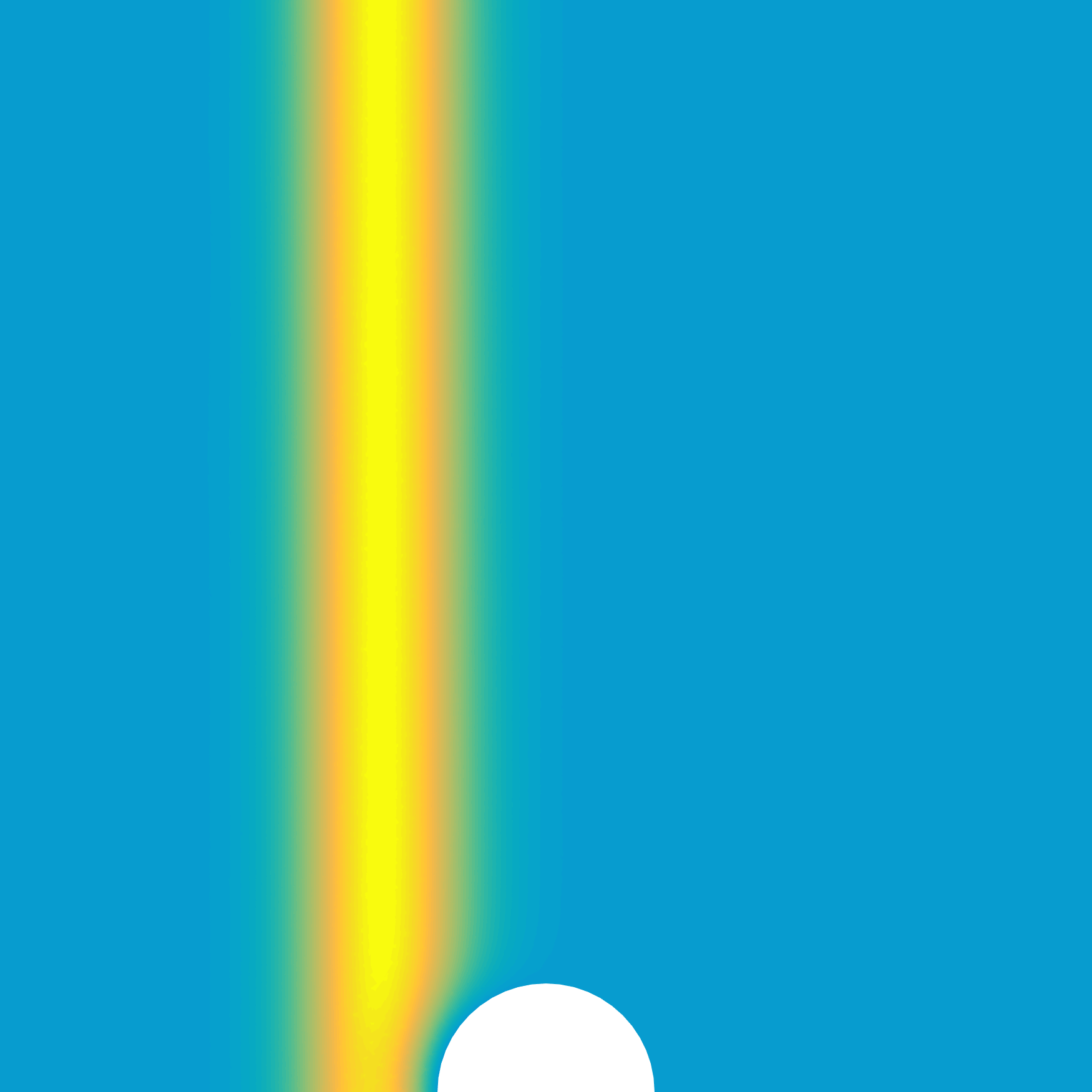} \\[1em]

\includegraphics[width=.25\textwidth]{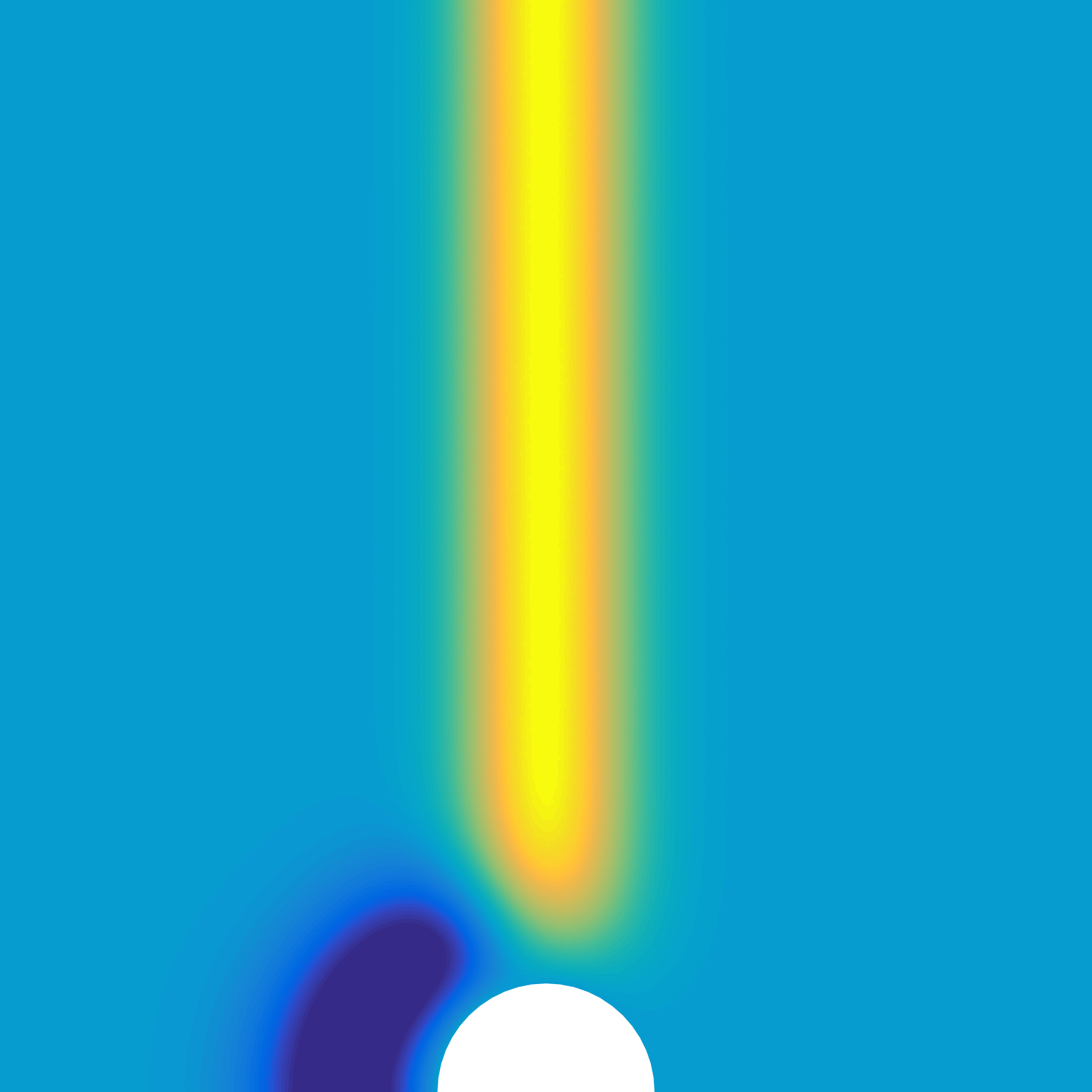}
\hskip3ex
\includegraphics[width=.25\textwidth]{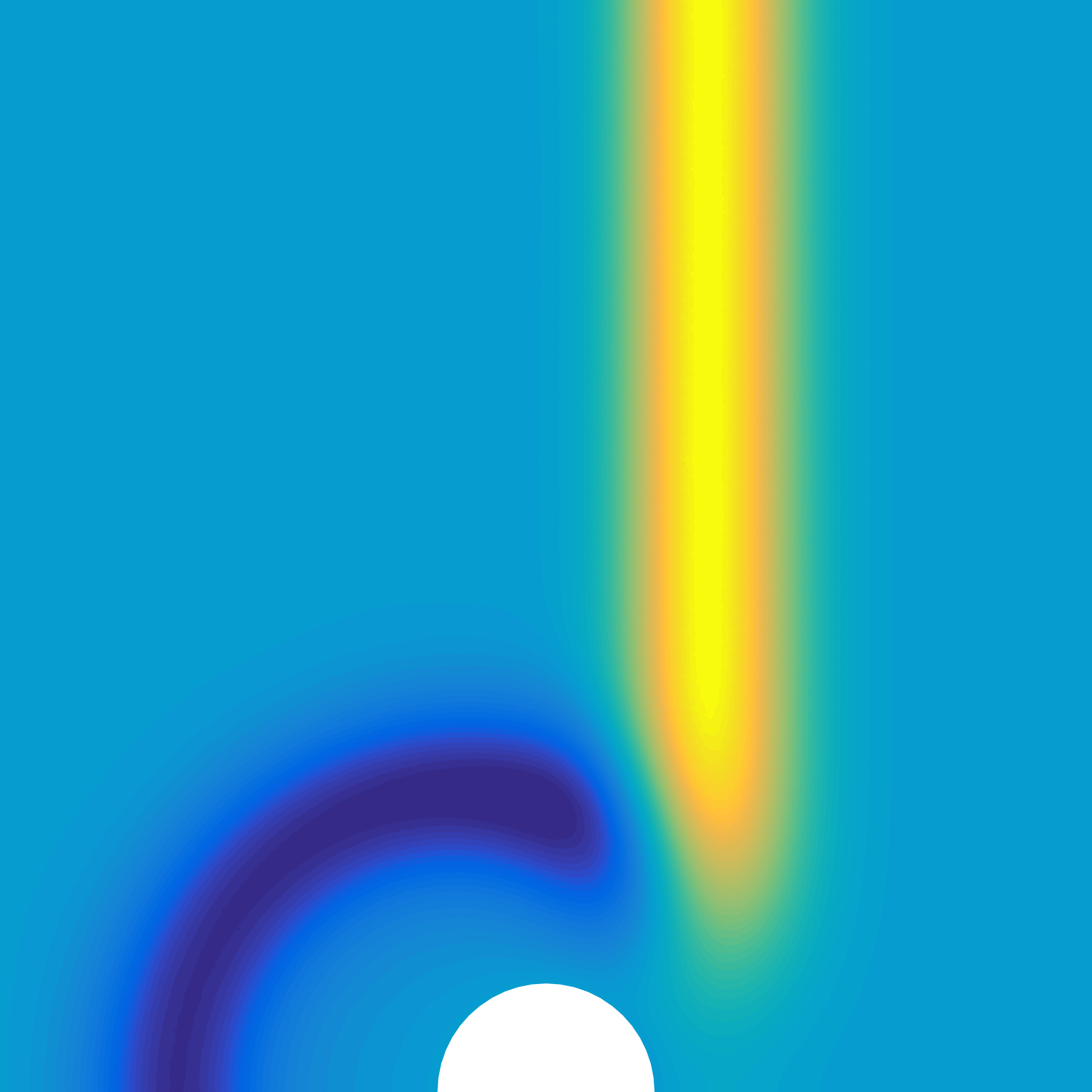}
\hskip3ex
\includegraphics[width=.25\textwidth]{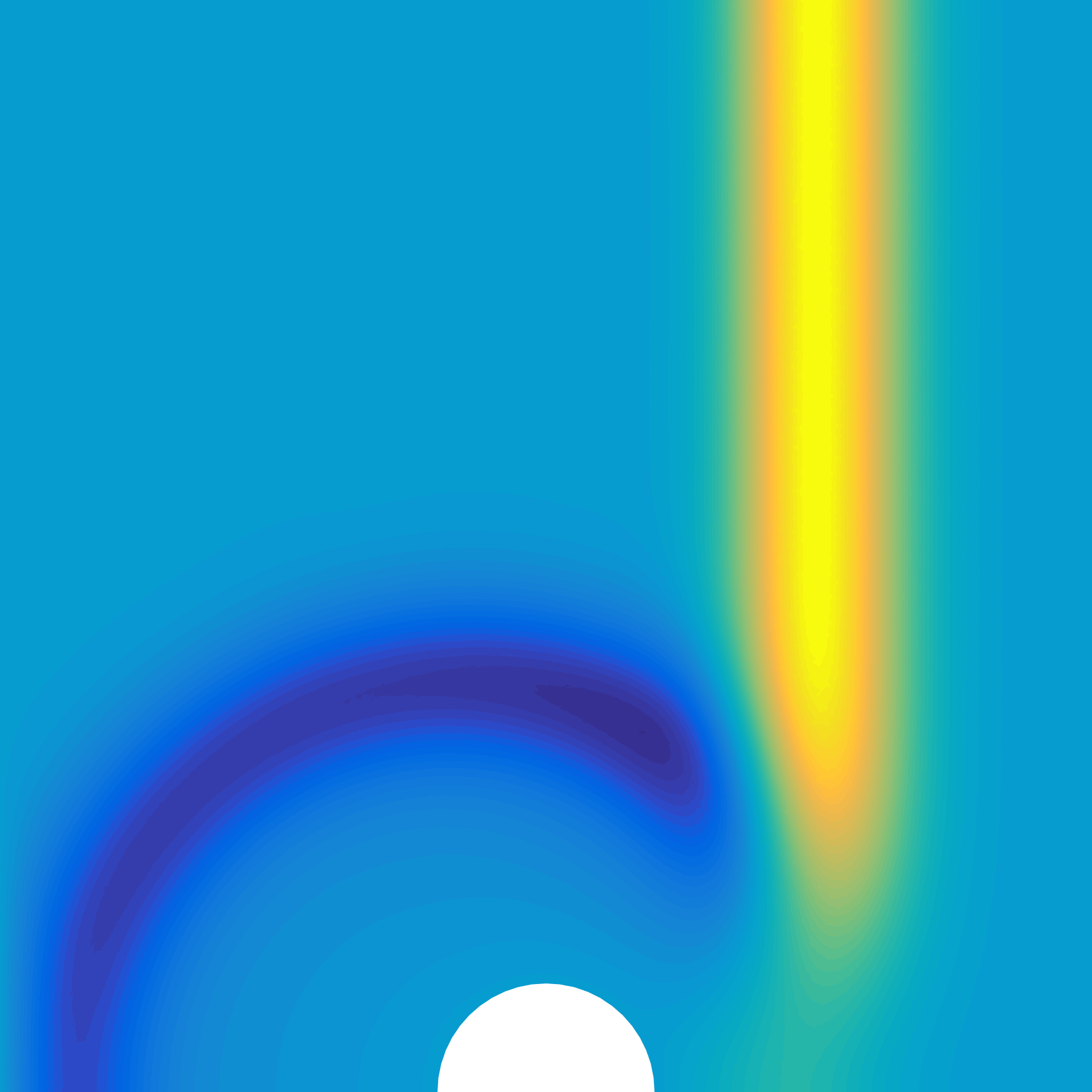} \\[1em]

\includegraphics[width=0.6\textwidth]{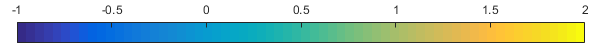}
\caption{Snapshots of the post-processed pressure fields $\widetilde p_h$ for time $t=0,0.5,1.2$ (top) and $t=1.5,1.8,2$ (bottom).}
\end{center}
\label{fig:3}
\end{figure}

To evaluate the convergence of the discretization scheme for this test problem, we utilize a sequence of meshes $\T_h$ obtained by uniform refinement of a quasi-uniform coarse mesh with $h=2^{-3}$. 
To guarantee a good approximation of the geometry, the vertices generated by refinement of edges at the curved boundary $\partial\Omega_S$ are projected to the exact circle after every refinement step. 
Note that $\T_{h/2}=F_{h/2}(\widetilde \T_{h/2})$ is a piecewise affine transformation of the mesh $\widetilde \T_{h/2}$ obtained by the usual regular refinement of the mesh $\T_h$. 
As a consequence, the meshes $\T_{h}$ are nested topologically but not geometrically. 
Any piecewise polynomial $p_h \in \P_k(\T_h)$ can however be prolongated to a piecewise polynomial $\pi_{h/2} p_h \in \P_k(\T_{h/2})$ defined by $\pi_{h/2} p_{h} = p_h \circ F_{h/2}^{-1}$; this allows us to compare discrete functions on different mesh levels.
In Table~\ref{tab:4}, we list the errors obtained in our computations after the post-processing was applied.
\begin{table}[h]
\begin{tabular}{c||c|c||c|c} 
$h$ & $\tnorm \widetilde u_{h/2} - \pi_{h/2}\widetilde u_h\tnorm$ & eoc & $\tnorm \widetilde p_{h/2} - \widetilde \pi_{h/2} p_h\tnorm$ & eoc \\
\hline
\hline
\rule{0pt}{2.1ex}
$2^{-3}$  & $0.359097$ & ---    &  $0.410665$ & ---    \\
$2^{-4}$  & $0.094968$ & $1.92$ &  $0.099264$ & $2.05$ \\
$2^{-5}$  & $0.023241$ & $2.03$ &  $0.023688$ & $2.07$ \\
$2^{-6}$  & $0.005753$ & $2.01$ &  $0.005829$ & $2.02$
\end{tabular}
\medskip
\caption{Errors and estimated order of convergence (eoc) for post-processed solution obtained with time step size $\tau=1/1000$.\label{tab:4}} 
\end{table}
Let us note that, although the convexity assumption (A2) is violated and despite the inexact representation of the geometry, we still observe convergence of second order after post-processing. These computational results indicate that 
our analysis might be extended in several ways.

\section{Discussion}

In this paper we considered the numerical approximation of acoustic wave propagation formulated as a first order hyperbolic system by a mixed finite element method with $\BDM_1$-$\P_0$ elements. An appropriate mass-lumping strategy was utilized which leads to block-diagonal mass matrices and allows an efficient time integration by explicit Runge-Kutta and multistep methods. 
Due to the perturbations introduced by the mass-lumping, the consistency error of this method is only of first order. 
Nevertheless, the numerical approximation carries second order information which was used to construct piecewise linear second order approximations for both solution components by certain post-processing procedures.
The resulting scheme can be interpreted as a generalization of finite-difference time-domain methods to unstructured grids. 

The theoretical results were illustrated by computational tests which indicate that some of the assumptions needed for our analysis, e.g., the convexity of the domain under consideration, might still be further relaxed. In addition, we also observed second order convergence for piecewise linear approximations of curved domains which is not covered by our theory yet. 
Let us finally note that our approach also seems applicable to problems in elasticity and electromagnetics. In two dimension, the latter can be obtained by simple rotation of the basis functions. Further extensions will be discussed elsewhere. 


\section*{Acknowledgements}

The authors are grateful for financial support by the ``Excellence Initiative'' of the German Federal and State Governments via the Graduate School of Computational Engineering GSC~233 at Technische Universität Darmstadt and by the German Research Foundation (DFG) via grants IRTG~1529 and TRR~154 project C4. 


\begin{thebibliography}{10}

\bibitem{Baker76}
G.~A. Baker.
\newblock Error estimates for finite element approximations for second order
  hyperbolic equations.
\newblock {\em SIAM J. Numer. Anal.}, 13:564--576, 1976.

\bibitem{BakerBramble79}
G.~A. Baker and J.~H. Bramble.
\newblock Semidiscrete and single step fully discrete approximations for second
  order hyperbolic equations.
\newblock {\em RAIRO Anal. Numer.}, 13:75--100, 1979.

\bibitem{BoffiEtAl08}
D.~Boffi, F.~Brezzi, L.~F. Demkowicz, R.~G. Dur{\'a}n, R.~S. Falk, and
  M.~Fortin.
\newblock {\em Mixed finite elements, compatibility conditions, and
  applications}, volume 1939 of {\em Lecture Notes in Mathematics}.
\newblock Springer-Verlag, Berlin; Fondazione C.I.M.E., Florence, 2008.

\bibitem{BoffiBrezziFortin13}
D.~Boffi, F.~Brezzi, and M.~Fortin.
\newblock {\em Mixed finite element methods and applications}, volume~44 of
  {\em Springer Series in Computational Mathematics}.
\newblock Springer, Heidelberg, 2013.

\bibitem{BrezziDouglasMarini85}
F.~Brezzi, J.~Douglas, and L.~D. Marini.
\newblock Two families of mixed elements for second order elliptic problems.
\newblock {\em Numer. Math.}, 88:217--235, 1985.

\bibitem{Cohen02}
G.~Cohen.
\newblock {\em Higher-Order Numerical Methods for Transient Wave Equations}.
\newblock Springer, Heidelberg, 2002.

\bibitem{CohenJolyRobertsTordjman01}
G.~Cohen, P.~Joly, J.~E. Roberts, and N.~Tordjman.
\newblock Higher order triangular finite elements with mass lumping for the
  wave equation.
\newblock {\em SIAM J. Numer. Anal.}, 38(6):2047--2078, 2001.

\bibitem{CohenMonk98}
G.~Cohen and P.~Monk.
\newblock Gauss point mass lumping schemes for {M}axwell's equations.
\newblock {\em Numer. Meth. Part. Diff. Equat.}, 14:63--88, 1998.

\bibitem{CowsarDupontWheeler96}
L.~C. Cowsar, T.~F. Dupont, and M.~F. Wheeler.
\newblock A priori estimates for mixed finite element approximations of
  second-order hyperbolic equations with absorbing boundary conditions.
\newblock {\em SIAM J. Numer. Anal.}, 33:492--504, 1996.

\bibitem{DouglasDupontWheeler78}
J.~Douglas, T.~Dupont, and M.~F. Wheeler.
\newblock A quasi-projection analysis of {G}alerkin methods for parabolic and
  hyperbolic equations.
\newblock {\em Math. Comp.}, 32:345--362, 1978.

\bibitem{Dupont73}
T.~Dupont.
\newblock $l^2$ estimates for {G}alerkin methods for second-order hyperbolic
  equations.
\newblock {\em SIAM J. Numer. Anal.}, 10:880--889, 1973.

\bibitem{EggerRadu16}
H.~Egger and B.~Radu.
\newblock Super-convergence and post-processing for mixed finite element
  approximations of the wave equation.
\newblock Technical report, 2016.
\newblock arXiv:1608.03818.

\bibitem{ErnGuermond}
A.~Ern and J.-L. Guermond.
\newblock {\em Theory and practice of finite elements}, volume 159 of {\em
  Applied Mathematical Sciences}.
\newblock Springer-Verlag, New York, 2004.

\bibitem{Evans98}
L.~C. Evans.
\newblock {\em Partial Differential Equations}, volume~19 of {\em Graduate
  Studies in Mathematics}.
\newblock American Mathematical Society, 1998.

\bibitem{Geveci88}
T.~Geveci.
\newblock On the application of mixed finite element methods to the wave
  equations.
\newblock {\em RAIRO Model. Math. Anal. Numer.}, 22:243--250, 1988.

\bibitem{HairerLubichWanner03}
E.~Hairer, C.~Lubich, and G.~Wanner.
\newblock Geometric numerical integration illustrated by the
  {S}t\"ormer-{V}erlet method.
\newblock {\em Acta Numer.}, 12:399--450, 2003.

\bibitem{JenkinsRiviereWheeler02}
E.~W. Jenkins, T.~Rivi{\`e}re, and M.~F. Wheeler.
\newblock A priori error estimates for mixed finite element approximations of
  the acoustic wave equation.
\newblock {\em SIAM J. Numer. Anal.}, 40:1698--1715, 2002.

\bibitem{Joly03}
P.~Joly.
\newblock Variational methods for time-dependent wave propagation problems.
\newblock In {\em Topics in Computational Wave Propagation}, volume~31 of {\em
  LNCSE}, pages 201--264. Springer, 2003.

\bibitem{LeeMadsen90}
R.~L. Lee and N.~K. Madsen.
\newblock A mixed finite element formulation for {M}axwell's equations in the
  time domain.
\newblock {\em J. Comput. Phys.}, 88(2):284--304, 1990.

\bibitem{Makridakis92}
C.~G. Makridakis.
\newblock On mixed finite element methods for linear elastodynamics.
\newblock {\em Numer. Math.}, 61:235--260, 1992.

\bibitem{MakridakisMonk95}
C.~G. Makridakis and P.~Monk.
\newblock Time-discrete finite element schemes for {M}axwell's equations.
\newblock {\em RAIRO Model. Math. Anal. Numer.}, 29:171--197, 1995.

\bibitem{Monk92}
P.~Monk.
\newblock Analysis of a finite element methods for {M}axwell's equations.
\newblock {\em SIAM J. Numer. Anal.}, 29:714--729, 1992.

\bibitem{Monk92a}
P.~Monk.
\newblock A comparison of three mixed methods for the time-dependent
  {M}axwell's equations.
\newblock {\em SIAM J. Sci. Statist. Comput.}, 13:1097--1122, 1992.

\bibitem{MonkSuli94}
P.~Monk and E.~S\"uli.
\newblock A convergence analysis of {Y}ee's scheme on nonuniform grids.
\newblock {\em SIAM J. Numer. Anal.}, 31(2):393--412, 1994.

\bibitem{Pazy83}
A.~Pazy.
\newblock {\em Semigroups of linear operators and applications to partial
  differential equations}, volume~44 of {\em Applied Mathematical Sciences}.
\newblock Springer-Verlag, New York, 1983.

\bibitem{Stenberg91}
R.~Stenberg.
\newblock Postprocessing schemes for some mixed finite elements.
\newblock {\em RAIRO Model. Math. Anal. Numer.}, 25:151--167, 1991.

\bibitem{Taflove}
A.~Taflove and S.~C. Hagness.
\newblock {\em Computational Electrodynamics: The Finite-Difference Time-Domain
  Method, 3rd ed.}
\newblock Artech House, Norwood, MA, 2005.

\bibitem{Wheeler73a}
M.~F. Wheeler.
\newblock {$L_{\infty }$} estimates of optimal orders for {G}alerkin methods
  for one-dimensional second order parabolic and hyperbolic equations.
\newblock {\em SIAM J. Numer. Anal.}, 10:908--913, 1973.

\bibitem{WheelerYotov06}
M.~F. Wheeler and I.~Yotov.
\newblock A multipoint flux mixed finite element method.
\newblock {\em SIAM J. Numer. Anal.}, 44(5):2082--2106, 2006.

\bibitem{Yee66}
K.~Yee.
\newblock Numerical solution of initial boundary value problems involving
  {M}axwell’s equations in isotropic media.
\newblock {\em IEEE Trans. Antennas and Propagation}, AP-16:302--307, 1966.

\end{thebibliography}

\end{document}